% !TeX spellcheck = en_US
% !TeX encoding = UTF-8
% !BIB program = bibtex

\documentclass[10pt,a4paper]{article}
\usepackage[a4paper,margin=3cm]{geometry}

\usepackage[T1]{fontenc}
\usepackage[utf8]{inputenc}
\usepackage{lmodern}
\usepackage{csquotes}
\usepackage{amsmath}
\usepackage{amsthm}
\usepackage{dsfont}
\usepackage{mathtools}
\usepackage{amssymb}
\usepackage{amsfonts}
\usepackage{mathrsfs}
\usepackage{graphicx}
\usepackage[shortlabels]{enumitem}
\usepackage{xcolor}
\usepackage{authblk}
\usepackage{hyperref}

\definecolor{darkblue}{rgb}{0,0,0.5}
\definecolor{darkred}{rgb}{0.5,0,0}
\definecolor{darkgreen}{rgb}{0,0.5,0}

\hypersetup{
  unicode=true,          % non-Latin characters in Acrobat’s bookmarks
  pdffitwindow=false,     % window fit to page when opened
  pdfstartview={FitH},    % fits the width of the page to the window
  colorlinks=true,       % false: boxed links; true: colored links
  linkcolor=darkblue,          % color of internal links (change box color with linkbordercolor)
  citecolor=darkred,        % color of links to bibliography
  filecolor=magenta,      % color of file links
  urlcolor=darkgreen           % color of external links
}

%%%%%%%%%%%%%%%%%%%%%%%%%%%%%%%%%%%%%%%%%%%%%%%%%%%%%%%%%%%%%%%%%%%%%%%%%%%%%%%%
%%% Better font
%%%%%%%%%%%%%%%%%%%%%%%%%%%%%%%%%%%%%%%%%%%%%%%%%%%%%%%%%%%%%%%%%%%%%%%%%%%%%%%%
\usepackage[scaled=1,sups]{XCharter}
\usepackage[charter,varbb,smallerops,scaled=1.08]{newtxmath}
\linespread{1.1}

%%%%%%%%%%%%%%%%%%%%%%%%%%%%%%%%%%%%%%%%%%%%%%%%%%%%%%%%%%%%%%%%%%%%%%%%%%%%%%%%
%%% Bibliography
%%%%%%%%%%%%%%%%%%%%%%%%%%%%%%%%%%%%%%%%%%%%%%%%%%%%%%%%%%%%%%%%%%%%%%%%%%%%%%%%
\usepackage[numbers,sort&compress]{natbib}
\usepackage{doi}

%%%%%%%%%%%%%%%%%%%%%%%%%%%%%%%%%%%%%%%%%%%%%%%%%%%%%%%%%%%%%%%%%%%%%%%%%%%%%%%%
%%% parskip (skips line between paragraphs, no indent)
%%%%%%%%%%%%%%%%%%%%%%%%%%%%%%%%%%%%%%%%%%%%%%%%%%%%%%%%%%%%%%%%%%%%%%%%%%%%%%%%
\usepackage{parskip}
% Hack for correct spacing around theorems with parskip package.
\begingroup
\makeatletter
\@for\theoremstyle:=definition,remark,plain\do{%
  \expandafter\g@addto@macro\csname th@\theoremstyle\endcsname{%
    \addtolength\thm@preskip\parskip }%
}
\endgroup

%%%%%%%%%%%%%%%%%%%%%%%%%%%%%%%%%%%%%%%%%%%%%%%%%%%%%%%%%%%%%%%%%%%%%%%%%%%%%%%%
%%% Theorem environments definitions
%%%%%%%%%%%%%%%%%%%%%%%%%%%%%%%%%%%%%%%%%%%%%%%%%%%%%%%%%%%%%%%%%%%%%%%%%%%%%%%%
\theoremstyle{plain}
\newtheorem{theorem}{Theorem}%[section]
\newtheorem{corollary}[theorem]{Corollary}
\newtheorem{prop}[theorem]{Proposition}
\newtheorem{lemma}[theorem]{Lemma}

\theoremstyle{definition}
\newtheorem{definition}[theorem]{Definition}

\newtheorem{remark}[theorem]{Remark}

%%%%%%%%%%%%%%%%%%%%%%%%%%%%%%%%%%%%%%%%%%%%%%%%%%%%%%%%%%%%%%%%%%%%%%%%%%%%%%%%
%%% Math basic commands
%%%%%%%%%%%%%%%%%%%%%%%%%%%%%%%%%%%%%%%%%%%%%%%%%%%%%%%%%%%%%%%%%%%%%%%%%%%%%%%%
\newcommand{\R}{\mathbb{R}}
\newcommand{\N}{\mathbb{N}}
\renewcommand{\P}{\mathbb{P}}
\newcommand{\E}{\mathbb{E}}
\newcommand{\1}{\mathds{1}}
\newcommand{\ed}{\overset{(d)}{=}}

\renewcommand{\d}{\mathrm{d}}
\newcommand{\e}{\mathrm{e}}

\newcommand{\norm}[1]{\lVert #1 \rVert}
\newcommand{\abs}[1]{\lvert #1 \rvert}

\renewcommand{\epsilon}{\varepsilon}
\renewcommand{\rho}{\varrho}
\renewcommand{\phi}{\varphi}
\renewcommand{\emptyset}{\varnothing}

%%%%%%%%%%%%%%%%%%%%%%%%%%%%%%%%%%%%%%%%%%%%%%%%%%%%%%%%%%%%%%%%%%%%%%%%%%%%%%%%
%%% Document specific commands
%%%%%%%%%%%%%%%%%%%%%%%%%%%%%%%%%%%%%%%%%%%%%%%%%%%%%%%%%%%%%%%%%%%%%%%%%%%%%%%%
\newcommand{\partit}[1]{\mathcal{P}_{#1}}
\newcommand{\markpart}[1]{\mathcal{M}_{#1}}
\newcommand{\markpartstar}{\mathcal{M}^{\star}_{\infty}}
\newcommand{\masspart}{\mathscr{S}^\downarrow}
\newcommand{\markmasspart}{\mathscr{Z}^\downarrow}
\newcommand{\restr}[1]{_{|[#1]}}

\DeclareMathOperator{\Exp}{Exp}
\DeclareMathOperator{\Be}{Be}
\DeclareMathOperator{\frag}{Frag}
\DeclareMathOperator{\ssfrag}{ssFrag}
\DeclareMathOperator*{\tol}{\longrightarrow}

\title{Fragmentations with self-similar branching speeds}
\author{Jean-Jil \textsc{Duchamps}}
\date{\today}

\affil{LmB UMR 6623, Université Bourgogne Franche-Comté, CNRS, F-25000 Besançon, France}

\begin{document}
\maketitle

\begin{abstract}
  We consider fragmentation processes with values in the space of marked partitions of $\N$, i.e.\ partitions where each block is decorated with a nonnegative real number.
  Assuming that the marks on distinct blocks evolve as independent positive self-similar Markov processes and determine the speed at which their blocks fragment, we get a natural generalization of the self-similar fragmentations of~\cite{Ber02}.
  Our main result is the characterization of these generalized fragmentation processes: a Lévy-Khinchin representation is obtained, using techniques from positive self-similar Markov processes and from classical fragmentation processes.
  We then give sufficient conditions for their absorption in finite time to a frozen state, and for the genealogical tree of the process to have finite total length.
\end{abstract}

\begin{table}[b!]
  \small
  \rule{.5\linewidth}{.4pt}
  
  \textbf{Keywords and phrases.} self-similar; branching process; exchangeable; fragmentation; partition; random tree; Lévy process.
  \smallskip
  
  \textbf{MSC 2010 Classification.} 60J80;60G09,60G18,60G51.
\end{table}

\tableofcontents

%%%%%%%%%%%%%%%%%%%%%%%%%%%%%%%%%%%%%%%%%%%%%%%%%%%%%%%%%%%%%%%%%%%%%%%%%%%%%%%%
%%%%%%%%%%%%%%%%%%%%%%%%%%%%%%%%%%%%%%%%%%%%%%%%%%%%%%%%%%%%%%%%%%%%%%%%%%%%%%%%

\section{Introduction}

A fragmentation process is a system of particles evolving in time in a Markovian way, where each particle is assigned a mass and may dislocate at random times, distributing its mass among newly created particles.
It is usually assumed that particles evolve independently of one another, in a way depending only on their mass.
Self-similar fragmentations are processes where the speed of fragmentation of a particle is accelerated proportionally to a function of its mass -- which then must be a power function, characterized by an exponent $\alpha\in\R$.
These processes are said to be homogeneous when $\alpha=0$.
Homogeneous and self-similar fragmentations have been characterized in the early 2000s (see \cite{Ber02b,Ber02}, or \cite{Ber06} for a general introduction), and their connections to random trees have been developed in e.g.\ \cite{AP98} or \cite{HM04,HMPW08}.

These studies have been made under a conservative assumption, which prevents the total mass in the system from increasing.
This assumption allows for instance the representation of fragmentation processes in terms of exchangeable partition-valued processes, which are convenient objects allowing one to naturally recover \emph{discrete} genealogical structures in fragmentation processes.

The primary goal of this article is to extend the self-similar assumption while staying in a conservative setting.
To this aim, we assume that particles are described by a pair mass-mark which evolves jointly in a Markovian way, such that a) the total mass does not increase, and b) it is now the \emph{mark} -- which may a priori fluctuate in any way -- of a particle which determines the speed at which it fragments.
The conservative assumption allows us to model this idea with Markov processes taking values in marked partitions of the integers, with very little restriction concerning marks.
Consequently, if one ignores the masses of particles, our processes essentially give constructions for quite general non-conservative fragmentations.
Related and inspiring works include self-similar branching Markov chains~\cite{Kre09}, the recent so-called branching Lévy processes of~\cite{BM18}, as well as many recent developments which have been published on self-similar growth-fragmentation processes (see e.g.\ \cite{Dad17,Ged19,Shi17}), introduced by Bertoin~\cite{Ber17}, which allow masses of particles to fluctuate as a positive Markov process.
Note the difference between our processes and the so-called (self-similar) multi-type fragmentation processes \cite{Ber08,Ste18,HS19+}, for which marks take a finite number of values but determine more than just the speed at which particles undergo fragmentation.
In our setting, marks live in a continuous space but two versions of a fragmentation started from a single particle, with different initial marks, will have essentially the same distribution up to a time-change -- which is incorrect in general for multi-type fragmentations.

The article is organized as follows.
In the remainder of the introduction, we recall some definitions and basic results of usual self-similar fragmentations, and define the space of marked partitions in which our processes live.
In Section~\ref{sec:ESSF-first-props} we define our extended self-similar fragmentation (ESSF) processes, and point out their basic properties.
We characterize ESSF processes with a type of Lévy-Khinchin representation in Section~\ref{sec:main-res}, and then give sufficient conditions for a process to almost surely a) reach an absorbing state in finite time b) have a genealogy where the sum of lengths of all branches is finite.
Because most proofs are somewhat technical, we defer them to Appendix~\ref{sec:app} to ease the exposition.

\subsection{Self-similar fragmentations}
To study processes with values in the space of partitions of $\N$, let us recall some classical notation and definitions.
First define $[n]:=\{1,2,\ldots, n\}$ for $n\in\N$ and $[\infty]:=\N=\{1,2,\ldots\}$.
Now for $n\in \N\cup\{\infty\}$, we denote by $\partit{n}$ the space of partitions of $[n]$.
We often see a partition $\pi\in\partit{n}$ as the equivalence relation $\sim^{\pi}$ it represents on $[n]$.
We will denote by $\mathbf{0}_{n}$ (resp.\ $\mathbf{1}_{n}$) the partition of $[n]$ into singletons (resp.\ the partition with a single block $\{[n]\}$).
We will often omit the subscript $n$ and write only $\mathbf{0}$ or $\mathbf{1}$ when the context is clear.

For $n < m \leq \infty$ and $\pi\in\partit{m}$, we denote by $\pi\restr{n}$ its restriction to the set $[n]\subset[m]$.
$\partit{\infty}$ may be understood as the projective limit of the sets $(\partit{n},n\in\N)$, and as such, a natural metric which makes this space compact may be defined on it by
\[
d(\pi,\pi') = \sup\{n\in\N,\,\pi\restr{n}=\pi'\restr{n}\}^{-1},
\]
where by convention $(\sup \N)^{-1}=0$.
We will consider the action of permutations of $\N$ on $\partit{\infty}$, and more generally we can define, for any $1\leq n \leq m \leq \infty$, any \emph{injection} $\sigma: [n]\to[m]$ and any $\pi\in\partit{m}$, the partition $\pi^{\sigma}\in\partit{n}$ defined by:
\[
i\sim^{\pi^{\sigma}}j \iff \sigma(i)\sim^{\pi}\sigma(j), \qquad i,j\in[n].
\]
Note that in this paper, a permutation $\sigma:\N\to\N$ is a bijection with \emph{finite} support ${\{n\in\N,\,\sigma(n)\neq n\}}$.
We usually label the blocks of a partition $\pi=\{\pi_1,\pi_2,\ldots\}$ in the unique way such that the sequence
$(\min \pi_k, k\geq 1)$ is increasing.
This way, $\pi_1$ is necessarily the block containing $1$, $\pi_2$ is the block containing the lowest integer not in the same block as $1$, etc.
By convention, if $\pi$ has a finite number of blocks, say $K$, we define $\pi_{K+l}=\emptyset$ for all $l\geq 1$.
It will be useful to define a fragmentation operator $\frag:\partit{\infty}\times(\partit{\infty})^{\N} \to \partit{\infty}$ by
\[
\frag(\pi,\pi^{(\cdot)}) = \{\pi_k\cap \pi^{(k)}_l, \; k,l\geq 1\},
\]
where $(\pi_k)$ are the ordered blocks of $\pi$ and $(\pi^{(k)}_l)$ the ordered blocks of $\pi^{(k)}$.
In words, blocks of the new partition are formed from the restriction of the $k$-th partition of the sequence $\pi^{(\cdot)}$ to $\pi_k$, for each $k\geq 1$.

Now let us recall the definition of partition-valued fragmentation processes (see e.g.\ \cite{Ber06}).
For this definition, we restrict ourselves to the space of partitions that have asymptotic frequencies, i.e.\ $\pi\in\partit{\infty}$ such that for all $k\geq 1$,
\[
\abs{\pi_k} := \lim_{n\to\infty}\frac{\# \pi_k\cap[n]}{n} \quad \text{exists}.
\]
In this case, we write $\abs{\pi}^{\downarrow}$ for the nonincreasing reordering of the sequence $(\abs{\pi_1},\abs{\pi_2},\ldots)$.
Let us write $\partit{\infty}'$ for the space of partitions of $\N$ with asymptotic frequencies.

\begin{definition} \label{def:ssfrag-usual}
  A self-similar fragmentation process is a càdlàg Markov process $(\Pi(t),t\geq 0)$ with values in $\partit{\infty}'$, such that almost surely for all $k\in\N$, the map $t\mapsto\abs{\Pi_k(t)}$ is right-continuous and for which the following properties hold.
  \begin{enumerate}[(i)]
    \item \emph{Exchangeability}: for all $\pi\in\partit{\infty}'$, for all permutations $\sigma:\N\to\N$,
    \begin{equation*}
    (\Pi(t)^{\sigma}, \, t\geq 0) \text{ under }\P_{\pi} \;\ed\;  (\Pi(t), \, t\geq 0) \text{ under }\P_{\pi^{\sigma}},
    \end{equation*}
    where $\P_{\pi}$ denotes the distribution of the process started from $\pi$.
    
    \item \emph{Self-similar branching}: there exists $\alpha\in\R$ such that if $(\Omega,\P)$ is a probability space where $(\Pi^{(\cdot)}(t),t\geq 0)$ is a sequence of independent copies of the process started from $\mathbf{1}$, then for any $\pi\in\partit{\infty}'$, we have
    \begin{equation}\label{eq:ssf_branching}
    (\Pi(t), \, t\geq 0) \text{ under }\P_{\pi} \;\ed\; (\frag(\pi,\widetilde{\Pi}^{(\cdot)}(t)),\,t\geq 0) \text{ under } \P,
    \end{equation}
    where $\widetilde{\Pi}^{(\cdot)}$ is the sequence of time-changed processes defined by
    \[
    \widetilde{\Pi}^{(k)}(t) = \Pi^{(k)}(\abs{\pi_k}^{\alpha}t), \qquad k\geq 1, t\geq 0.
    \]
  \end{enumerate}
\end{definition}
Note that a fragmentation with self-similarity index $\alpha =0$ is called homogeneous.
It is well-known (we refer to \cite[Section 1 to 3]{Ber06} for a detailed account on the theory of partition-valued fragmentations) that self-similar fragmentations can be characterized in terms of their self-similarity index $\alpha$, a so-called erosion coefficient $c\geq 0$ and a dislocation measure $\nu$ on the (metric and compact when equipped with the uniform distance) space 
\[
\masspart:=\big\{\mathbf{s} = (s_1,s_2,\ldots)\in[0,1]^{\N}\text{ where }s_1 \geq s_2 \geq \ldots \geq 0\text{ and }\sum_{k}s_k\leq 1\big\},
\]
satisfying
\[ \int_{\masspart}(1-s_1)\,\nu(\d \mathbf{s}) < \infty. \]
In words, $c$ is the rate at which each singleton detaches from ``macroscopic'' blocks and $\nu$ is a measure giving the rates of ``sudden dislocations'', i.e.\ a block with asymptotic frequency $x$ fragments at rate $\nu(\d \mathbf{s})$ into (possibly infinitely many) blocks with frequencies given by $x\mathbf{s} = (xs_1,xs_2, \ldots)$ -- these dislocations of blocks are usually represented by a so-called paintbox process, which we will define in the context of marked partitions in the next section.
The self-similarity index $\alpha$ of a fragmentation encodes, through property \eqref{eq:ssf_branching}, the speed at which blocks fragment, depending on their size.
For instance, if $\alpha$ is negative, then there is a random time $T$ which is finite almost surely at which $\Pi(T)$ is the partition into singletons, whereas it is never the case when $\alpha\geq 0$ and $\nu(s_1=0)=0$.
Note that $\alpha=0$ means that there is no time change -- in that case the sequence $\widetilde{\Pi}^{(\cdot)}$ in \eqref{eq:ssf_branching} is simply $\Pi^{(\cdot)}$ -- the process is then said to be homogeneous.

Our goal is to generalize these objects and define processes $(\Pi(t), \mathbf{V}(t), t\geq 0)$, where $\Pi$ is partition-valued and $\mathbf{V}(t)=(V_n(t),n\geq 1)$ is a random map $\N\to [0,\infty)$ playing the role of $(\abs{\pi_k}^{\alpha},k\geq 1)$, i.e.\ dictating the speed of fragmentation of different blocks of $\Pi$.
To define this we need first to introduce the formalism of marked partitions and processes in this space.

\subsection{Partitions with marks} \label{sec:paintbox}

Let us consider partitions where each block is decorated with a mark.
For convenience, we consider that the space of marks is the space $[0,\infty]$ where $0$ is identified with $\infty$.
Formally we define this as the set $[0,\infty)$ endowed with the topology consisting of a) the usual open sets of $(0,\infty)$ and b) the open sets of $[0,\infty)$ containing $0$ \emph{and} a half-line $(A,\infty)$ for some $A > 0$.
Topologically it is a circle so we will denote it by $S^1$, but throughout the paper, elements of $S^1$ will be identified with their unique representative in $[0,\infty)$,
so that $S^1$ is furthermore endowed with a multiplicative structure, inherited from that of $[0,\infty)$.
This enables us to consider for instance the maps
\[
m_x : v \mapsto xv \qquad \text{and} \qquad p_\alpha: v\mapsto v^{\alpha}
\]
as well-defined and continuous on $S^1$, where $x$ is in $S^1$ or $[0,\infty)$ and $\alpha\in\R\setminus\{0\}$.
Note that for a technical reason, we choose to use throughout the article the convention $0^0=0$, so that $0^{\alpha}=0$ for any $\alpha\in\R$, and $v^0=\1_{v\neq 0}$ for any $v\in S^1$.

For $n\in\N\cup\{\infty\}$, we consider the space of marked partitions defined by
\begin{align*}
\markpart{n} &:= \big\{x = (\pi,\mathbf{v}) \in \partit{n}\times (S^1)^{[n]}, \; 
v \text{ is constant on the blocks of }\pi
\big\}\\
& \;=
\bigcap_{(i,j) \in [n]^2}\big\{x = (\pi,\mathbf{v}) \in \partit{n}\times (S^1)^{[n]}, \; i\nsim^{\pi}j \text{ or } v_i=v_j \big\}.
\end{align*}
It is a closed subset of $\partit{n}\times(S^1)^{[n]}$, which, endowed with the product topology, is compact metrizable, therefore Polish.
Note that by definition, if $(\pi,\mathbf{v})\in\markpart{n}$ where $\pi=\mathbf{1}$ is the partition into a single block, then $\mathbf{v}$ is of the form $(v,v,\ldots)$ for a unique $v\in S^1$.
For this reason we will use the abuse of notation $(\mathbf{1},v)$ to denote this element.
We see $x=(\pi,\mathbf{v})$ as the partition $\pi$ where each block is given a mark.
Therefore, we will sometimes say $B$ is \emph{a block of $x$ with mark $v$} if $B\in \pi$ and $v_i=v$ for some (therefore all) $i\in B$.
Similarly, we will use the notation $i\sim^{x}j$ if $i$ and $j$ are in the same block of~$\pi$.

Note that for $n < m \leq \infty$ and $x=(\pi,\mathbf{v})\in\markpart{m}$, we can naturally consider the restrictions $x\restr{n} = (\pi,\mathbf{v})\restr{n} := (\pi\restr{n},(v_1, v_2, \ldots, v_n)) \in \markpart{n}$, which are clearly continuous maps.

Similarly, we can extend the action of injections $\sigma:[n]\to[m]$ to our context and define for $x=(\pi,\mathbf{v})\in\markpart{m}$,
\[
x^{\sigma}=(\pi,\mathbf{v})^{\sigma} = (\pi^{\sigma},\mathbf{v}^{\sigma}) := (\pi^{\sigma},(v_{\sigma(i)},\,i\in [n])) \in\markpart{n}.
\]
We say that a random variable $X$ with values in $\markpart{\infty}$ is exchangeable if for all permutations ${\sigma:\N\to\N}$,
\[
X^{\sigma} \ed X.
\]
Finally we can also extend the fragmentation operator $\frag$ to marked partitions by setting
\[
\frag\!\big((\pi,\mathbf{v}),(\pi^{(\cdot)},\mathbf{v}^{(\cdot)})\big) := \big(\!\frag(\pi,\pi^{(\cdot)}), \widetilde{\mathbf{v}}\big),
\]
where, for $i\geq 1$, $\widetilde{v}_{i}$ is defined by $v_i {v_i}^{(k_i)}$, where $k_i$ is the label of the block of $\pi$ containing~$i$ -- so that $i$ is in the $k_i$-th block of $\pi$.

We say that a marked partition $x\in\markpart{\infty}$ is \emph{non-degenerate} if every finite block has mark $0$, and we denote the space of non-degenerate marked partitions by
\[
\markpartstar := \big\{x=(\pi,\mathbf{v})\in \markpart{\infty}, \; \forall i\geq 1,\, 
\text{if }i\text{ is in a finite block of }\pi\text{ then }v_i=0
\big\}.
\]
In particular for singleton blocks, $\{i\}\in\pi$ implies $v_i=0$.
Note that this space is still Polish \cite[see e.g.][Theorem 2.2.1]{Sri98} as a $G_\delta$-subset -- a countable intersection of open sets -- of $\markpart{\infty}$.
Indeed, letting for all $i\in\N$, $N_i:\markpart{\infty}\to\N\cup\{\infty\}$ be the map associating $(\pi,\mathbf{v})$ with the cardinality of the block of $\pi$ containing $i$, then, taking $d$ to be any metric on $S^1$ compatible with its topology, we can write
\begin{align*}
\markpartstar &= \bigcap_{i\geq 1}\{N_i < \infty \implies v_i = 0\}\\
&= \bigcap_{i,j,k\geq 1}\big(\{N_i \geq j\}\cup\{d(v_i,0) < 1/k\}\big),
\end{align*}
which is a countable intersection of open subsets of $\markpart{\infty}$.
Note that if $n$ is finite, one cannot define an analogous property of non-degeneracy for marked partitions in $\markpart{n}$.

Now let us define paintbox processes for exchangeable marked partitions.
Consider the space $([0,1]\times[0,\infty),\preceq)$ equipped with the lexicographic order, that is if $z=(s,v)\in [0,1]\times[0,\infty)$ and $z'=(s',v')\in[0,1]\times[0,\infty)$, then
\[
z\preceq z' \iff s < s' \text{ or }(s=s'\text{ and }v\leq v').
\]
Let us define
-- using the notation $z_k=(s_k,v_k)$ --
\[
\markmasspart_0 := \big\{ \mathbf{z}=(z_1,z_2,\ldots) \in \big([0,1] \times [0,\infty)\big)^{\N},\; z_1 \succeq z_2 \succeq \ldots,\text{ and }\sum_k s_k \leq 1 \big\},
\]
and note that, endowed with the product topology, it is a Polish space.
Indeed, it can be written
\[
\markmasspart_0 = \big\{\sum_k s_k \leq 1 \big\}\cap \bigcap_{i\geq 1}\Big(\big\{ s_i > s_{i+1}\big\}\cup\big\{ s_i = s_{i+1} \text{ and } v_i \geq v_{i+1} \big\}\Big),
\]
which is a countable intersection of closed and open subsets of $\big([0,1] \times [0,\infty)\big)^{\N}$.
This space being Polish, closed sets are $G_{\delta}$, and so $\markmasspart_0$ is Polish.
Because this will be consistent with our previous definition of $\markpartstar$, we want to ignore the possible indices $k\geq 1$ such that $s_k=0$.
Therefore, we will rather use the space
\begin{align*}
\markmasspart &:=  \big\{ \mathbf{z}\in \markmasspart_0,\; \forall k \geq 1,\, s_k=0\implies v_k=0 \big\}\\
&= \bigcap_{k,l \geq 1}\big\{\mathbf{z}\in\markmasspart_0, \,  s_k > 0 \text{ or } v_k < 1/l\big\},
\end{align*}
which is still Polish.

Similarly as in the usual case, we say that $x=(\pi,\mathbf{v})\in\markpart{\infty}$ has asymptotic frequencies if $\pi$ has asymptotic frequencies.
In that case, we define $\abs{x}^{\downarrow}\in \markmasspart$ as the nonincreasing reordering (with respect to the lexicographic order $\preceq$ on $[0,1]\times[0,\infty)$) of the sequence of pairs
\[
\big((\abs{B_i},v_i),\, B_i \text{ is the }i\text{-th infinite block of }x\text{ such that }\abs{B_i}>0\text{ and with mark }v_i\big).
\]
Note that we consider only blocks $B$ satisfying $\abs{B}>0$ in the previous display since in general the set $\{(\abs{B_i},v_i),\, B_i \text{ is the }i\text{-th block of }x\text{, with mark }v_i\}$ may be impossible to enumerate in nonincreasing order.

Now let us introduce a paintbox construction for marked partitions.
Consider $\mathbf{z}=(\mathbf{s},\mathbf{v})\in\markmasspart$, and let $(U_n, n\geq 1)$ be an i.i.d.\ sequence of $[0,1]$-uniform random variables.
Define $X=(\Pi,\mathbf{V})$ as the $\markpartstar$-valued random variable given by the following relation:
let $t_n := \sum_{k=1}^{n} s_k$, with $t_0 := 0$ by convention, and
\begin{gather*}
i\sim^{\Pi}j \iff i=j \text{ or }\exists n\geq 1, \; t_{n-1} \leq U_i, U_j < t_n,\\
V_i := 
\begin{cases}
v_n &\text{ if }t_{n-1} \leq U_i < t_n, \text{ for }n\geq 1,\\
0 	&\text{ if } \sum_ks_k \leq U_i.
\end{cases}
\end{gather*}
It is easily checked that the random variable $X$ is exchangeable.
Also, recall the definition of asymptotic frequencies for a marked partition, and note that the law of large numbers implies $\abs{X}^{\downarrow}=\mathbf{z}$ almost surely.
We denote by $\rho_{\mathbf{z}}$ the distribution of $X$.
We will also make use of the distribution of $X\restr{n}$ for $n\in\N$, which we denote by $\rho^n_{\mathbf{z}}$.
Note that for any $v\in S^1$, if $\mathbf{z}=(\mathbf{s},\mathbf{v})\in\markmasspart$ is the unique element such that $s_1 = 1$ and $v_1 = v$, then $\rho_{\mathbf{z}} = \delta_{(\mathbf{1},v)}$.
For this reason, we will again abuse notation and let $(\mathbf{1},v)\in \markmasspart$ denote this element, so that $\rho_{(\mathbf{1},v)} = \delta_{(\mathbf{1},v)}$.

It is well-known since the work of Kingman \cite[Theorem 2]{Kin82} that the law of an exchangeable partition can be expressed as a mixture of paintbox processes.
Using similar arguments, one obtains the following result for marked partitions.

\begin{prop} \label{prop:paintbox}
  Let $X$ be an exchangeable random variable with values in $\markpartstar$.
  Then there exists a unique probability measure $\nu$ on $\markmasspart$ such that
  \begin{equation} \label{eqprop:paintbox}
  \P\big(X\in\cdot\big)=\int_{\markmasspart}\rho_{\mathbf{z}}(\cdot)\,\nu(\d \mathbf{z}).
  \end{equation}
\end{prop}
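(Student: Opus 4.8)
The plan is to reduce the statement to Kingman's paintbox theorem for ordinary partitions and then handle the marks separately. First I would argue \emph{existence}. Given an exchangeable $X=(\Pi,\mathbf{V})$ with values in $\markpartstar$, the underlying partition $\Pi$ is an exchangeable random partition of $\N$, so by Kingman's theorem \cite{Kin82} it has asymptotic frequencies almost surely and, conditionally on $\abs{\Pi}^{\downarrow}=\mathbf{s}$, is distributed as a paintbox directed by $\mathbf{s}$. Moreover one can realize $\Pi$ via an i.i.d.\ uniform sequence $(U_n)$ and the (random) partition points $t_n=\sum_{k\le n}s_k$, in such a way that block $n$ of the paintbox is $\{i: t_{n-1}\le U_i<t_n\}$. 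The extra data is the mark $\mathbf{V}$: since $X\in\markpartstar$, finite blocks (in particular singletons) carry mark $0$, and each \emph{infinite} block has a well-defined common mark. I would show, using exchangeability of $X$, that conditionally on the frequencies the mark attached to the block of frequency $s_n$ is a random element $v_n\in S^1$ whose joint law with $\mathbf{s}$ is measurable, and that these marks are assigned to the blocks in a way independent of the labelling $(U_n)$. Packaging $(\mathbf{s},\mathbf{v})$ into its nonincreasing $\preceq$-reordering $\abs{X}^{\downarrow}$ gives a random element $\mathbf{Z}\in\markmasspart$; let $\nu$ be its law. Then the claim is precisely that, conditionally on $\abs{X}^{\downarrow}=\mathbf{z}$, $X$ is distributed as $\rho_{\mathbf{z}}$, which is what the paintbox construction was built to give, so \eqref{eqprop:paintbox} follows by disintegration.

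To make the conditional statement rigorous I would proceed through finite restrictions. For each $n$, $X\restr{n}$ is an exchangeable marked partition of $[n]$, and its law is determined by the probabilities $\P(X\restr{n}=x)$ for the finitely many $x\in\markpart{n}$ (this is a finite set once we note marks only matter block-by-block and, on $[n]$, there are only finitely many relevant mark-patterns up to the values themselves — more carefully, I would test against bounded continuous functions of $X\restr{n}$). The key computation is a de Finetti / Kingman-type paintbox identity: for a fixed marked partition pattern on $[n]$ with blocks $B_1,\dots,B_r$ of sizes $b_1,\dots,b_r$ and marks $w_1,\dots,w_r$, together with a set of ``singleton'' indices, the probability under $\rho^n_{\mathbf{z}}$ is an explicit polynomial in the $(s_k)$ and an evaluation at the $(v_k)$. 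One then checks that $\E[\rho^n_{\mathbf{Z}}(\cdot)]$ matches $\P(X\restr{n}\in\cdot)$; letting $n\to\infty$ and using that the restriction maps are continuous and the cylinder events generate the Borel $\sigma$-algebra on $\markpartstar$ (which is Polish, as noted in the excerpt), Carathéodory/monotone-class gives \eqref{eqprop:paintbox}.

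For \emph{uniqueness}, suppose $\nu$ and $\nu'$ both satisfy \eqref{eqprop:paintbox}. Under $\rho_{\mathbf{z}}$ the random variable $\abs{X}^{\downarrow}$ equals $\mathbf{z}$ almost surely (law of large numbers, as observed just before the proposition), so pushing \eqref{eqprop:paintbox} forward under $x\mapsto\abs{x}^{\downarrow}$ yields $\nu = \P(\abs{X}^{\downarrow}\in\cdot) = \nu'$ on $\markmasspart$. This direction is short once the measurability of $x\mapsto\abs{x}^{\downarrow}$ on the set of marked partitions with asymptotic frequencies is in hand.

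The main obstacle I expect is the bookkeeping around degenerate blocks and ties in the frequencies: marks on finite blocks are forced to $0$ (that is the point of $\markpartstar$), but when two infinite blocks share the same frequency $s_k=s_{k+1}$ the lexicographic tie-break by the mark must be tracked consistently between the paintbox construction and the reordering $\abs{\cdot}^{\downarrow}$, and one must also confirm that blocks of frequency $0$ do not surreptitiously carry nonzero marks in the limit — this is exactly why the space $\markmasspart$ was defined with the constraint $s_k=0\implies v_k=0$. Handling this amounts to checking that the map $\mathbf{z}\mapsto\rho_{\mathbf{z}}$ is well-defined and measurable on all of $\markmasspart$ and that $\abs{X}^{\downarrow}\in\markmasspart$ a.s., after which the argument is a routine adaptation of Kingman's proof.
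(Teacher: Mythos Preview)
Your uniqueness argument is correct and coincides with the paper's: push forward by $x\mapsto\abs{x}^{\downarrow}$ and use that $\abs{X}^{\downarrow}=\mathbf{z}$ almost surely under $\rho_{\mathbf{z}}$.

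For existence, however, there is a genuine gap. You correctly identify $\nu$ as the law of $\abs{X}^{\downarrow}$ and reduce the problem to showing that, conditionally on $\abs{X}^{\downarrow}=\mathbf{z}$, $X$ has law $\rho_{\mathbf{z}}$. But the step ``One then checks that $\E[\rho^n_{\mathbf{Z}}(\cdot)]$ matches $\P(X\restr{n}\in\cdot)$'' is precisely the content of the proposition, and you have not indicated how to carry it out. Applying Kingman to $\Pi$ alone yields the conditional paintbox structure of the \emph{partition}, but it does not by itself show that the mark carried by a block depends only on that block's frequency and not on \emph{which integers} it happens to contain. Your sentence ``these marks are assigned to the blocks in a way independent of the labelling $(U_n)$'' is the crux of the matter, and it is asserted rather than proved; exchangeability certainly makes it plausible, but turning that intuition into a rigorous conditional-independence statement is exactly the work that is missing.

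The paper takes a different, cleaner route that handles the partition and the marks simultaneously. It introduces an auxiliary i.i.d.\ uniform sequence $(U_k)_{k\ge1}$ indexed by \emph{block labels}, defines $Z_i:=(U_{k(\Pi,i)},V_i)$ where $k(\Pi,i)$ is the label of the block of $\Pi$ containing $i$, and proves directly that the sequence $(Z_i)_{i\ge1}$ is exchangeable in $[0,1]\times S^1$ (this needs a short argument, because permuting the $i$'s induces a random permutation of the block labels). De Finetti then furnishes a random directing measure $\theta$ on $[0,1]\times S^1$; its atoms $(u_k,v_k)$ with masses $\theta(\{(u_k,v_k)\})$ recover at once the block frequencies \emph{and} their marks, and the assumption $X\in\markpartstar$ forces the singleton blocks to carry mark $0$. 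This single application of de Finetti in the enlarged space replaces your two-step plan (Kingman for $\Pi$, then a separate argument for $\mathbf{V}$) and sidesteps the conditional-independence issue entirely.
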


\begin{proof}
  See Appendix \ref{sec:proof-paintbox}.
\end{proof}

This setting of marked partitions being in place, we can now define our objects of study.

%%%%%%%%%%%%%%%%%%%%%%%%%%%%%%%%%%%%%%%%%%%%%%%%%%%%%%%%%%%%%%%%%%%%%%%%%%%%%%%%
%%%%%%%%%%%%%%%%%%%%%%%%%%%%%%%%%%%%%%%%%%%%%%%%%%%%%%%%%%%%%%%%%%%%%%%%%%%%%%%%

\section{Extended self-similar fragmentations} \label{sec:ESSF-first-props}

\subsection{Definitions, first properties}

Let us now define self-similar fragmentation processes with values in $\markpart{\infty}$.
For this, let us introduce a family of \emph{self-similar fragmentation} operators $(\ssfrag_\alpha, \alpha\in\R)$, defined as follows.
For $n\in\N\cup\{\infty\}$, consider a marked partition $x=(\pi,\mathbf{v})\in\markpart{n}$ and a sequence $\bar{x}^{(\cdot)}$ of càdlàg \emph{maps} ${\bar{x}^{(k)}\colon[0,\infty)\to\markpart{n}}$, satisfying $\bar{x}^{(k)}(0)=(\mathbf{1},1)$.
Writing for all $k\geq 1$ and $t\geq 0$, $\bar{x}^{(k)}(t)=(\bar\pi^{(k)}(t),\bar {\mathbf{v}}^{(k)}(t))$, we define
\begin{equation*}\ssfrag_\alpha(x,\bar{x}^{(\cdot)}):=(\hat{\pi},\hat{\mathbf{v}})
\end{equation*}
as the map $[0,\infty)\to\markpart{n}$ such that
\begin{gather*}
\hat{v}_i(t) = v_i \bar{v}^{(k_i)}(v_i^{\alpha} t) \\
i\sim^{\hat{\pi}(t)}j \iff i\sim^{\pi}j \;\text{ and }\;i\sim j\text{ in }\bar{\pi}^{(k_i)}(v_i^{\alpha}t),
\end{gather*}
where $k_i$ is defined as the label of the block of $\pi$ containing~$i$ (i.e.\ such that $i$ is in the $k_i$-th block of~$\pi$).
Note that
we use the convention $0^\alpha = 0$ for all $\alpha\in\R$, so that
thanks to this definition, if a block $B$ of $x$ has mark $0$, then the process $\ssfrag_\alpha(x,\bar{x}^{(\cdot)})$ is frozen at block $B$, in the sense that for all $t\geq 0$, $B$ is a block of $\hat{\pi}(t)$, and every $j\in B$ will have $\hat{v}_j(t)=0$.
Also, the assumptions on the maps $\bar{x}^{(k)}$ imply that $\ssfrag_\alpha(x,\bar{x}^{(\cdot)})$ is càdlàg and satisfies $\ssfrag_\alpha(x,\bar{x}^{(\cdot)})(0)=x$.

\begin{remark}~\label{rk:fragproc_continuity}
  \begin{enumerate}[(i)]  
    \item Consider here a convergent sequence $x_n=(\pi_{n},\mathbf{v}_{n})\to x=(\pi,\mathbf{v})\in\markpart{\infty}$, and assume that $v_{n,i} = 0$ for all $n\geq 1$ whenever $v_i=0$ for some $i$.
    If additionally we have for some $t\geq 0$, for all $i\geq 1$ such that $v_i>0$, and for all $k\geq 1$,
    \[
    \bar{x}^{(k)}(v^{\alpha}_{n,i}t) \underset{n\to\infty}{\longrightarrow} \bar{x}^{(k)}(v^{\alpha}_{i}t) ,
    \]
    then it is a straightforward consequence of the definition that
    \[
    \ssfrag_\alpha(x_{n},\bar{x}^{(\cdot)})(t) \underset{n\to\infty}{\longrightarrow} \ssfrag_{\alpha}(x,\bar{x}^{(\cdot)})(t).
    \]
    
    \item Note that one could define $\ssfrag_\alpha$ in terms of $\frag$ because we have the equality
    \[
    \ssfrag_\alpha\!\big(x,\bar{x}^{(\cdot)}\big)(t) = \frag\!\big(x, \bar{x}^{(\cdot)}(w^{\alpha}_{(\cdot)}t)\big),
    \]
    where $w_{(\cdot)}$ is the vector defined by $w_{(k)}=v_i$, for any $i$ in the $k$-th block of $\pi$.
  \end{enumerate}
\end{remark}

We can now define the following generalization of self-similar fragmentations.

\begin{definition} \label{def:ESSF}
  Let $X(t)=(\Pi(t), \mathbf{V}(t), t\geq 0)$ be a stochastic process with values in $\markpart{\infty}$.
  We say that $X$ is an \emph{extended self-similar fragmentation} (ESSF) process if it is a \emph{stochastically continuous} \emph{strong Markov} process with \emph{càdlàg sample paths}, for which the following properties hold:
  \begin{enumerate}[(i)]
    \item \emph{Exchangeability:}  for all permutations $\sigma:\N\to\N$, for all $x\in\markpart{\infty}$,
    \[
    (X(t)^{\sigma}, \, t\geq 0) \text{ under }\P_{x} \ed  (X(t), \, t\geq 0) \text{ under }\P_{x^{\sigma}} ,
    \]
    where $\P_{x}$ denotes the distribution of the Markov process started from $x$.
    
    \item \emph{Self-similar branching:} there exists $\alpha\in\R$ such that for all $x\in\markpart{\infty}$,
    \[
    X \text{ under }\P_{x} \ed \ssfrag_\alpha\!\big(x, X^{(\cdot)}\big),
    \]
    where $X^{(\cdot)}$    is an i.i.d.\ sequence of copies of the process started from $(\mathbf{1},1)$.
    As usual, we call $\alpha$ the index of self-similarity, and we will say for conciseness that $X$ is an $\alpha$-ESSF.
    For the special case $\alpha=0$, we will sometimes say the process $X$ is \emph{homogeneous}.
  \end{enumerate}
  An ESSF process $X$ will be called \emph{non-degenerate} if for all $x\in\markpartstar$, the process has sample paths in $\markpartstar$, $\P_{x}$-almost surely.
\end{definition}

\begin{remark}~
  \begin{enumerate}[(i)]
    \item 
    Consider $X=(\Pi,\mathbf{V})$ an $\alpha$-ESSF and $\gamma\in\R\setminus\{0\}$.
    Let us define $Y:=(\Pi,\mathbf{V}^{\gamma})$, where $\mathbf{V}^{\gamma}(t)$ is simply the vector $(V_i(t)^{\gamma},\,i\geq 1)$.
    Then it is easily checked that $Y$ is an ESSF again, with index of self-similarity $\alpha/\gamma$.
    Therefore, if $\alpha\neq 0$ and $\beta\neq 0$, taking $\gamma = \alpha/\beta$, one can transform any $\alpha$-ESSF into a $\beta$-ESSF, but note that one cannot get a homogeneous process with this transformation.
    As a result, there are really two classes of ESSF processes to consider: the $\alpha$-ESSF with $\alpha\neq 0$, which are a simple transformation away from being $1$-ESSF processes, and the so-called homogeneous $0$-ESSF processes.
    
    \item Note that this definition extends the classical case of Definition \ref{def:ssfrag-usual}.
    Indeed, if $\Pi$ is a usual $\alpha$-self-similar fragmentation process started from $\mathbf{1}$, then by definition, almost surely for all $t\geq 0$ and $i\in \N$, $\Pi(t)$ has asymptotic frequencies and one can define $V_i(t) := \abs{B}$ if $B$ is the block containing $i$ in $\Pi(t)$.
    Now consider an independent sequence $X^{(\cdot)}$ of copies of $(\Pi,\mathbf{V})$, and define for any  $x\in\markpart{\infty}$,
    \[
    X_x = \ssfrag_\alpha\!\big(x,X^{(\cdot)}\big).
    \]
    Then $X_x$ is the distribution of an $\alpha$-ESSF started from $x$, which extends the usual self-similar fragmentation $\Pi$ -- consider $x=(\mathbf{1},1)$ to obtain the original process.
    Note also that in this case $X$ is non-degenerate, because finite blocks have asymptotic frequency equal to $0$.
  \end{enumerate}
\end{remark}

As a first result about ESSF processes, let us show a projective Markov property.
It is very analogous to \cite[Lemma 3.2]{BDLS18} and \cite[Proposition 2]{Duc18}, but we need another statement in the present context.\begin{lemma} \label{lem:projmarkov}
  Let $X$ be an ESSF process.
  Then for any $n\in\N$, the process $(X(t)\restr{n},t\geq 0)$ is Markovian in $\markpart{n}$.
  More precisely, there exists a measurable transition kernel $(p^n_t,t\geq 0)$  on $\markpart{n}$ 
  such that for any initial state $x\in\markpart{\infty}$,
  \[
  \P_{x}(X(t)\restr{n}\in \cdot) = p^{n}_t(x\restr{n},\cdot\,).
  \]
\end{lemma}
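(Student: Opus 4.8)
The plan is to establish consistency of the restrictions under the self-similar branching property and then invoke a standard argument showing that a consistent family of restrictions of an exchangeable Markov process is itself Markovian. First I would fix $n\in\N$ and $x\in\markpart{\infty}$, and compute $\P_x(X(t)\restr{n}\in\cdot)$ using the self-similar branching identity $X$ under $\P_x \ed \ssfrag_\alpha(x,X^{(\cdot)})$. Writing $x\restr{n}=(\pi\restr{n},\mathbf{v}\restr{n})$ and using Remark~\ref{rk:fragproc_continuity}(ii), the restriction $\ssfrag_\alpha(x,X^{(\cdot)})(t)\restr{n}$ depends only on: the partition $\pi\restr{n}$, the marks $v_1,\dots,v_n$, and the restrictions to $[n]$ of those blocks $X^{(k)}(v_{i}^{\alpha}t)$ for the (finitely many) labels $k$ of blocks of $\pi$ meeting $[n]$. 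Crucially, $X^{(k)}\restr{n}$ is a measurable function of $X^{(k)}\restr{n}$ alone, so the law of $\ssfrag_\alpha(x,X^{(\cdot)})(t)\restr{n}$ is a measurable functional of $x\restr{n}$ together with the laws of $(X^{(k)}(s)\restr{n})$ for $s$ of the form $v_i^\alpha t$. This already shows $\P_x(X(t)\restr{n}\in\cdot)$ depends on $x$ only through $x\restr{n}$, which defines the candidate kernel $p^n_t(x\restr{n},\cdot)$; measurability in the first argument follows since $v\mapsto v^\alpha$ is continuous on $S^1$ and the paintbox/fragmentation maps are measurable.

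Next I would verify the Chapman--Kolmogorov relation for $(p^n_t)$. For this I would use the strong Markov property of $X$ at a fixed time $t$, together with a key coherence observation: conditionally on $X(t)$, the future increment of $X$ restricted to $[n]$ is governed, via the branching property applied at time $t$, by the blocks of $X(t)$ meeting $[n]$ and their marks $V_1(t),\dots,V_n(t)$ — exactly the data recorded in $X(t)\restr{n}$. Concretely, applying the branching property at time $t$ expresses $X(t+s)\restr{n}$ in terms of $X(t)$ and an independent family of copies of the process from $(\mathbf 1,1)$; restricting to $[n]$, the relevant input collapses to $X(t)\restr{n}$ and the laws $p^n_s$ again, giving $p^n_{t+s}(x\restr{n},\cdot)=\int p^n_s(y,\cdot)\,p^n_t(x\restr{n},\d y)$. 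I would phrase this carefully using that the blocks of $\Pi(t)$ are, after relabelling, still blocks of $X(t)\restr{n}$ when intersected with $[n]$, so the composition of the two fragmentation steps restricted to $[n]$ matches a single fragmentation step from $x\restr{n}$ with the composed driving processes — this is the marked-partition analogue of the cocycle identity $\frag(\frag(\pi,\pi^{(\cdot)}),\pi^{(\cdot,\cdot)})=\frag(\pi,\cdot)$ used in the classical theory.

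I expect the main obstacle to be the bookkeeping in the Chapman--Kolmogorov step: one must check that when the branching property is applied twice (at time $t$ and then for a further time $s$), the independent driving families can be combined consistently so that the two-step evolution on $[n]$ is genuinely a function of $X(t)\restr n$ and an independent family, with no hidden dependence on blocks of $\Pi(t)$ that do not meet $[n]$ but could later "re-enter" $[n]$ — which they cannot, since restriction to $[n]$ only ever coarsens and fragmentation only refines, so a block disjoint from $[n]$ stays disjoint. A secondary technical point is handling the self-similar time change: the speeds $v_i^\alpha$ vary across the finitely many blocks meeting $[n]$, but since there are only finitely many such blocks this is just a finite product of one-dimensional time-changed laws, and measurability is preserved. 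I would also remark that stochastic continuity of $X$ transfers to $(X(t)\restr n)$ since $x\mapsto x\restr n$ is continuous, which ensures the kernel $(p^n_t)$ is a bona fide (measurable in $t$) transition kernel; the càdlàg property of paths likewise descends. Once these points are settled, the claimed statement follows, and I would note the kernel is unique as the law of $X(t)\restr n$ under $\P_y$ for any $y\in\markpart{\infty}$ with $y\restr n$ equal to the given state.
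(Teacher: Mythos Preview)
Your core argument --- using the self-similar branching identity to see that $\ssfrag_\alpha(x,X^{(\cdot)})(t)\restr{n}$ depends on $x$ only through $x\restr{n}$ --- is exactly the paper's approach, and correct. (One sentence is garbled: ``$X^{(k)}\restr{n}$ is a measurable function of $X^{(k)}\restr{n}$ alone'' is a tautology; presumably you meant the restricted output depends only on restricted inputs.)

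Where you diverge from the paper is in scope. The paper's proof is much shorter: it establishes only that $p^n_t(x\restr{n},\cdot):=\P_x(X(t)\restr{n}\in\cdot)$ is well-defined and measurable, and stops there. Chapman--Kolmogorov is never verified explicitly, because it is automatic: once the one-step law of a function $f(X(t))$ of a Markov process $X$ depends only on $f(x)$, the process $f(X)$ is Markov by the standard lumpability/Dynkin criterion. Your second and third paragraphs are therefore correct but unnecessary; the ``bookkeeping obstacle'' you anticipate never needs to be faced. The paper also handles measurability differently: rather than arguing via continuity of $v\mapsto v^\alpha$ and of the fragmentation maps, it simply picks a measurable section $\phi_n:\markpart{n}\to\markpart{\infty}$ with $\phi_n(y)\restr{n}=y$ and writes $p^n_t(y,A)=\P_{\phi_n(y)}(X(t)\restr{n}\in A)$, which is measurable because $X$ is a strong Markov process. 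This is cleaner than tracking continuity through the $\ssfrag_\alpha$ construction.
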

\begin{proof}
  To show that $p^n$ is well-defined by the formula above, we need to prove that
  \[
  \P_{x}\big(X(t)\restr{n}\in \cdot\big) = \P_{x'}\big(X(t)\restr{n}\in \cdot\big)
  \]
  for any two initial states $x,x'\in\markpart{\infty}$ such that ${x'}\!\!\restr{n}=x\restr{n}$.
  
  Consider a probability space $(\Omega,\P)$ such that $X^{(\cdot)}$ is a sequence of i.i.d.\ copies of the ESSF process started from $(\mathbf{1},1)$, and let $\alpha\in\R$ be the self-similarity index of $X$.
  By the branching property, we have
  \begin{align*}
  & \P_{x}\big(X(t)\restr{n}\in \cdot\big) = \P\big( \!\ssfrag_\alpha(x,X^{(\cdot)})(t) \restr{n}\in \cdot\big)\\
  \text{and } & \P_{x'}\big(X(t)\restr{n}\in \cdot\big) = \P\big( \!\ssfrag_\alpha(x',X^{(\cdot)})(t) \restr{n}\in \cdot\big).
  \end{align*}
  It remains to notice that by definition, $\ssfrag_\alpha(x,X^{(\cdot)})(t) \restr{n}$ is in fact a functional which depends only on $x\restr{n}$ and $X^{(\cdot)}$.
  Therefore, because ${x'}\!\!\restr{n}=x\restr{n}$, we have
  \[
  \ssfrag_\alpha(x,X^{(\cdot)})(t) \restr{n} = \ssfrag_\alpha(x',X^{(\cdot)})(t) \restr{n}
  \]
  everywhere on $\Omega$, which implies by the preceding display that
  \[
  \P_{x}\big(X(t)\restr{n}\in \cdot\big) = \P_{x'}\big(X(t)\restr{n}\in \cdot\big).
  \]
  This shows that $p^n$ is well-defined.
  It remains to note that this is indeed a measurable transition kernel, i.e.\ to show that for any Borel set $A\subset\markpart{n}$, the map $(x,t)\in\markpart{n}\times[0,\infty)\mapsto p^n_t(x,A)$ is measurable.
  This is the case because the map $(x,t)\in\markpart{\infty}\times[0,\infty)\mapsto \P_{x}(X(t)\in A)$ is measurable by definition of $X$ as a strong Markov process, and we can write
  \[
  p^n_t(x,A) = \P_{\phi_n(x)}(X(t)\in A)
  \]
  for any well-chosen measurable map $\phi_n:\markpart{n}\to\markpart{\infty}$ such that $\phi_n(x)\restr{n}=x$, for instance such that $\phi_n(x)$ is the unique such marked partition with a unique infinite block $\{n+1,n+2,\ldots\}$ with mark $0$.
\end{proof}

The previous lemma shows that given an ESSF process $X$, one can define its law started from any $x_0\in\markpart{n}$, for any $n\in \N$, as the law of the restriction $X\restr{n}$ of the initial process started from any $x\in\markpart{\infty}$ such that $x\restr{n}=x_0$.

As a result, the restriction $X\restr{1}$ of an ESSF process $X=(\Pi,\mathbf{V})$ to $\markpart{1}=\partit{1}\times S^1$ is a Markov process.
Since the space $\partit{1}$ is a singleton, the lemma implies that the real-valued process $V_1=(V_1(t),t\geq 0)$ is a Markov process in $S^1$ and note that by exchangeability, the process $V_i$ has the same marginal distribution for all $i\geq 1$.
Further, Definition \ref{def:ESSF} implies that it is an a.s.\ càdlàg strong Markov process satisfying a self-similar property; more precisely, for $v\geq 0$ let $P_v$ denote the distribution of $V_1$ started at $v$ on the Skorokhod space of càdlàg maps $[0,\infty)\to S^1$, and let $V$ denote the canonical process on that space.
Then
\[
(V(t),\,t\geq 0) \text{ under } P_v \ed (vV(v^\alpha t),\,t\geq 0) \text{ under }P_1,
\]
where $\alpha$ is the self-similarity index of $X$.
In other words, $V$ is a positive self-similar Markov process (pssMp).
Note that in the literature, the index of self-similarity of a pssMp refers in general to $-\alpha$ \cite{PR13} or $-1/\alpha$ when $\alpha\neq 0$, e.g.\ in~
\cite{Lam72} where Lamperti calls this the \emph{order} of the process rather than the index.
Here we use the convention found in the self-similar fragmentation literature, e.g.~\cite{Ber02,Ber02b,Kre09}.
Let us summarize in a proposition some properties of $V$ that can be deduced from the well-developed theory of self-similar Markov processes.
First, if $X=(\Pi,\mathbf{V})$ is an ESSF process, for each $i\geq 1$ define $\zeta_i := \inf\{t\geq 0,\, V_i(t)=0\}$, and for $t\in[0,\zeta_i]$,
\[
\phi_i(t) := \int_{0}^{t}V_i(t)^{\alpha}\,\d s.
\]
Note that $\phi_i$ is continuous and increasing.
We define its right-continuous inverse $\tau_i(t)$, for $t\in[0,\infty)$, by
\[
\tau_i(t) := \begin{cases}
\phi_i^{-1}(t) &\text{if }t < \phi_i(\zeta_i),\\
\infty &\text{if }t\geq \phi_i(\zeta_i).
\end{cases}
\]
We need a convention for infinite times, so we let $V_i(\infty)\equiv 0$, so that $V_i(\tau_i(t))$ is always defined.
Also, note that the definition of the $\frag$ operator implies that a.s., $\Pi$ has nonincreasing sample paths for the \emph{finer-than} partial order -- $\pi$ is finer than $\pi'$ if the blocks of $\pi'$ can be written as unions of blocks of $\pi$.
Since a.s.\ for all $n\geq 1$, $\Pi(t)\restr{n}$ is nonincreasing in a finite set, it is eventually constant.
This implies that $\Pi(t)$ converges a.s.\ when $t\to\infty$, and we may denote its limit by $\Pi(\infty)$.
Let us now state the proposition.

\begin{prop}  Let $X=(\Pi,\mathbf{V})$ be an $\alpha$-ESSF process and $i\geq 1$, and define
  \[
  \begin{gathered}
  \zeta_i := \inf\{t\geq 0,\, V_i(t)=0\},\\
  \phi_i(t) := \int_{0}^{t}V_i(t)^{\alpha}\,\d s, \qquad t\in [0,\zeta_i] \\
  \tau_i(t) := \phi_i^{-1}(t),\qquad t\geq 0
  \end{gathered}
  \]
  Then the following properties hold.
  \begin{itemize}
    \item Either $\zeta_i<\infty$ $\P_{(\mathbf{1},1)}$-a.s., or $\zeta_i=\infty$ $\P_{(\mathbf{1},1)}$-a.s.
    \item Either $\phi_i(\zeta_i)<\infty$ $\P_{(\mathbf{1},1)}$-a.s., or $\phi_i(\zeta_i)=\infty$ $\P_{(\mathbf{1},1)}$-a.s.
    \item In the case $\zeta_i < \infty$, either $V_i$ reaches $0$ continuously $\P_{(\mathbf{1},1)}$-a.s., or $V_i$ eventually jumps to $0$ $\P_{(\mathbf{1},1)}$-a.s.
    \item $\phi_i(\zeta_i) = \infty$ iff $V_i$ reaches $0$ continuously.
    \item The process $\xi_i := \log(V_i\circ\tau_i)$, -- i.e.\ defined by
    \[
    \xi_i(t) = \log V_i(\tau_i(t)), \qquad 0\leq t< \phi_i(\zeta_i),
    \]
    is a (killed in the case $\phi_i(\zeta_i) < \infty$) Lévy process called the inverse \emph{Lamperti transform} of $V_i$.
  \end{itemize}
\end{prop}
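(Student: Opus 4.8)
The plan is to deduce all five assertions from the fact, already recorded above, that each mark process $V_i$ is a positive self-similar Markov process of index $\alpha$, by invoking Lamperti's transformation. First I would reduce to $i=1$. The state $(\mathbf{1},1)$ is invariant under every permutation $\sigma$, so exchangeability (Definition~\ref{def:ESSF}(i)) gives $X^\sigma\ed X$ under $\P_{(\mathbf{1},1)}$; applying this with $\sigma$ the transposition of $1$ and $i$ shows that, under $\P_{(\mathbf{1},1)}$, the path of $V_i$ has the law of the path of $V_1$. Since $\zeta_i$, $\phi_i$, $\tau_i$ and $\xi_i$ are deterministic functionals of the sample path of $V_i$, it suffices to prove everything for $i=1$.

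Next I would set up the Lamperti correspondence. The process $V_1$ under $(P_v)_{v\geq 0}$ is càdlàg and strong Markov (Lemma~\ref{lem:projmarkov} together with the strong Markov property of $X$), has the ``frozen'' state $0$ as an absorbing point (the definition of $\ssfrag_\alpha$ at a block with mark $0$), and satisfies the scaling relation displayed above -- i.e.\ it is a positive self-similar Markov process of index $\alpha$, with $V_1(s)>0$ for every $s<\zeta_1$. Hence $\phi_1$ is continuous with derivative $V_1(s)^\alpha>0$ on $[0,\zeta_1)$ (using $v^0=\1_{v\neq 0}$), so it is strictly increasing there and $\tau_1=\phi_1^{-1}$ is well defined. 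Lamperti's theorem \cite{Lam72} (see also \cite{PR13}) then gives that $\xi_1=\log(V_1\circ\tau_1)$, defined on $[0,\phi_1(\zeta_1))$, is a Lévy process, conservative when $\phi_1(\zeta_1)=\infty$ and killed -- at an independent $\Exp(q)$ time for some rate $q>0$ -- when $\phi_1(\zeta_1)<\infty$; this is the last bullet. With $q\geq 0$ the killing rate (a deterministic feature of $P_1$) and $L\in(0,\infty]$ the lifetime of $\xi_1$, the inverse Lamperti transform yields the path identities
\[
\phi_1(\zeta_1)=L,\qquad \zeta_1=\int_0^{L}\e^{-\alpha\xi_1(r)}\,\d r,\qquad V_1(\zeta_1^-)=\e^{\xi_1(L^-)}\quad\text{on }\{\zeta_1<\infty\}.
\]

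The four remaining bullets then follow. Bullet 2 is immediate, since $L$ is a.s.\ equal to an $\Exp(q)$ variable when $q>0$ and a.s.\ infinite when $q=0$; in particular $\phi_1(\zeta_1)=\infty\iff q=0$. For bullet 3 I would use that a Lévy process takes a finite value at every fixed finite time, so on $\{\zeta_1<\infty\}$ one has $V_1(\zeta_1^-)=\e^{\xi_1(L^-)}>0$ (``$V_1$ jumps to $0$'') exactly when $q>0$, and $V_1(\zeta_1^-)=0$ (``$V_1$ reaches $0$ continuously'', which on $\{\zeta_1<\infty\}$ forces $q=0$, $L=\infty$ and $\xi_1$ to diverge) exactly when $q=0$; as $\{q>0\}$ is deterministic this is the stated dichotomy, and comparing it with the line above gives bullet 4. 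For bullet 1: if $\alpha=0$ then $\phi_1(t)=t$ and $\zeta_1=L$, so $\zeta_1<\infty\iff q>0$; if $\alpha\neq 0$ then $\zeta_1=\int_0^L\e^{-\alpha\xi_1(r)}\,\d r$ is a.s.\ finite when $q>0$ (the integrand being continuous on the compact $[0,L]$), and when $q=0$ the triviality of $\{\zeta_1<\infty\}$ follows by combining the classical $0$--$1$ law for the asymptotics of the Lévy process $\xi_1$ (a.s.\ it drifts to $+\infty$, drifts to $-\infty$, or oscillates) with the $0$--$1$ law for the a.s.\ finiteness of the exponential functional $\int_0^\infty\e^{-\alpha\xi_1(r)}\,\d r$.

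The substantive inputs are the Lamperti representation and these $0$--$1$ laws for Lévy processes, all of which I would simply cite. The only real care is with the conventions of this paper: since $0$ is identified with $\infty$ in $S^1$, the absorbing state $\partial:=0=\infty$ of $V_1$ could a priori be approached from below ($V_1\to 0^+$, i.e.\ $\xi_1\to-\infty$) or from above ($V_1\to\infty$, i.e.\ $\xi_1\to+\infty$), so one should check that $\xi_1$ is still a possibly-killed Lévy process in $\R$ in this slightly nonstandard situation -- which it is, because the scaling of $V_1$ forces $\xi_1$ to have stationary independent increments -- and that the convention $0^\alpha=0$ keeps $\phi_1$ finite and strictly increasing on $[0,\zeta_1)$. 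This bookkeeping, rather than the Lévy-process input, is where I expect the proof to need attention.
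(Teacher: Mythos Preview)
Your proposal is correct and follows the same approach as the paper: both reduce the statement to the classical theory of positive self-similar Markov processes via Lamperti's representation~\cite{Lam72}. The paper's own proof is in fact a one-line citation (``These are classical results on pssMp, we refer to~\cite{Lam72} for a proof''), so your write-up is considerably more detailed than what the paper actually provides; the exchangeability reduction to $i=1$ and the explicit unpacking of the dichotomies from the killing rate and the exponential functional are all correct elaborations of that citation.
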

\begin{proof}
  These are classical results on pssMp, we refer to \cite{Lam72} for a proof.
\end{proof}

This proposition tells us that it is natural to consider the time-changed processes $V_i\circ \tau_i$ for $n\geq 1$, which behave as exponentials of Lévy processes.
However, there is no unique time-change that could make the whole process $X$ behave nicely.
Instead, we have to rely on stopping lines, which are tools generalizing stopping times in the context of branching Markov processes (see e.g.\ \cite{Cha91} for their use in branching Brownian motion, or \cite{Ber02,Ber06} in the context of fragmentations).

\subsection{Stopping lines, changing the index of self-similarity}

First let us define some filtrations associated with an ESSF process $X=(\Pi,\mathbf{V})$.
To this aim, let us endow the power set $2^{\N}:=\{A\subset\N\}$ with the topology generated by the metric $d(A,B) := \big(\!\sup\{n\in\N, A\cap[n] =B\cap[n]\}\big)^{-1}$, which makes $2^{\N}$ a compact space.
Now for $i\in \N$, let us define the block process $(B_i(t),t\geq 0)$ as the $2^{\N}$-valued càdlàg process such that for all $t\geq 0$, $B_i(t)$ is the block of $\Pi(t)$ containing $i$, that is:
\[
B_i(t) = \{j\in\N, \,i\sim^{\Pi(t)}j \}.
\]
Now we can define a sequence of natural filtrations associated with $X$ by
\[
\mathcal{G}_i=(\mathcal{G}_i(t),\,t\geq 0) \quad \text{ with }\mathcal{G}_i(t) = \sigma\big(B_i(s),V_i(s),\; s\in[0,t]\big), \qquad i\geq 1, t\geq 0.
\]

\begin{definition}
  Let $X=(\Pi,\mathbf{V})$ be an ESSF process.
  A sequence $L=(L_i,i\geq 1)$ of random variables with values in $[0,\infty]$ is called a stopping line if
  \begin{enumerate}[(i)]
    \item for all $i\geq 1$, $L_i$ is a $\mathcal{G}_i$-stopping time.
    \item for $i,j\geq 1$, if $i\sim^{\Pi(L_i)} j$, then $L_i=L_j$.
  \end{enumerate}
  Since (ii) entails that $i\sim^{\Pi(L_i)} j$ is an equivalence relation, its equivalence classes form a well-defined partition of $\N$ which we denote by $\Pi(L)$ with a slight abuse of notation.
  Also, denoting $\mathbf{V}(L)$ as the vector $(V_i(L_i),i\geq 1)$, it is clear that $X(L):=(\Pi(L),\mathbf{V}(L))$ is a well-defined (random) element of $\markpart{\infty}$.
\end{definition}

\begin{remark} \label{rk:stopping_lines}
  A fixed time $t\geq 0$ can be seen as a stopping line (an $L$ for which $L_i\equiv t$ for all $i\geq 1$), and it is easily checked that for a stopping line $L$, one can define $L+t$ and $L\wedge t$ by
  \[
  (L+t)_i = L_i+t \qquad \text{ and }(L\wedge t)_i = L_i \wedge t,
  \]
  which are again stopping lines.
  Thus for a stopping line $L$ we will be able to consider the processes $X(L+\cdot) := (X(L+t),\,t\geq 0)$ and $X(L\wedge\cdot) := (X(L\wedge t),\,t\geq 0)$.
  Since it will be useful, we define the following $\sigma$-algebra:
  \[
  \mathcal{G}_L := \sigma\big(X(L\wedge t),\,t\geq 0\big).
  \]
\end{remark}

We can now state the Markov property for stopping lines, which is analogous to what can be found in \cite[Lemma 3.14]{Ber06}.

\begin{prop}[Stopping line Markov property] \label{prop:stopline}
  Let $X$ be an $\alpha$-ESSF, and $L$ be a stopping line.
  Then conditional on $\mathcal{G}_L$, the following equality in distribution holds:
  \begin{equation}\label{eqprop:markov_lines}
  X(L+\cdot) \ed \ssfrag_\alpha\!\big(X(L),X^{(\cdot)}\big),
  \end{equation}
  where $X^{(\cdot)}$  is an independent, i.i.d.\ sequence of copies of the process started from $(\mathbf{1},1)$.
\end{prop}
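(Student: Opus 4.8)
The plan is to follow the classical scheme for the stopping-line Markov property of fragmentations (cf.\ \cite[Lemma 3.14]{Ber06}): first prove \eqref{eqprop:markov_lines} for deterministic times, then for ``simple'' stopping lines taking finitely many values by iterating the branching property, and finally for general stopping lines by approximation from above. Throughout it suffices, since both sides of \eqref{eqprop:markov_lines} are càdlàg and $\markpart{\infty}$ is the projective limit of the $\markpart{n}$, to check that for every $n,m,r\geq 1$, all times $0\leq t_1<\dots<t_r$, every bounded continuous $F\colon(\markpart{m})^r\to\R$ and every bounded $\mathcal{G}_L$-measurable $Z$,
\begin{equation*}
\E\big[F\big(X(L+t_1)\restr{m},\dots,X(L+t_r)\restr{m}\big)\,Z\big] = \E\big[F\big(\ssfrag_\alpha(X(L),X^{(\cdot)})(t_1)\restr{m},\dots\big)\,Z\big],
\end{equation*}
with $X^{(\cdot)}$ an i.i.d.\ sequence of copies of the process started from $(\mathbf{1},1)$, independent of $X$.

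First I would treat a deterministic line $L\equiv t$: here $\mathcal{G}_L=\mathcal{F}_t:=\sigma(X(u),u\leq t)$, and combining the strong Markov property at the fixed time $t$ with the branching property (Definition~\ref{def:ESSF}(ii)) applied to the process restarted at $X(t)$ — using that $y\mapsto\mathrm{law}(\ssfrag_\alpha(y,X^{(\cdot)}))$ is measurable — yields that, conditionally on $\mathcal{G}_L$, $X(t+\cdot)\ed\ssfrag_\alpha(X(t),X^{(\cdot)})$ (the case $t=0$ being the branching property). Next, for $L$ taking values in a fixed finite set, I would first reduce to the case where $X$ starts from $(\mathbf{1},1)$: by the branching property at time $0$, $X$ decomposes into sub-fragmentations $X^{(k)}$ indexed by the blocks of the initial partition (run on the clock rescaled by the deterministic factor $w_k^\alpha$), and $L$ restricted to the $k$-th block is a stopping line $w_k^\alpha L$ for $X^{(k)}$; since for a process started from $(\mathbf{1},1)$ a stopping line is either $\equiv 0$ (trivial) or takes only positive values, it remains to handle the latter case, which I do by induction on the number $p$ of distinct values of $L$. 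If $p=1$ then $L$ is constant and we are in the deterministic case; if $p\geq 2$, let $s=\min L>0$, apply the deterministic case at time $s$ to write $X(s+\cdot)\ed\ssfrag_\alpha(X(s),X^{(\cdot)})$ conditionally on $\mathcal{F}_s$, observe that the residual line $L_i-s$, read in the time-scale of the sub-fragmentation $X^{(k)}$ of the $k$-th block of $\Pi(s)$ (i.e.\ multiplied by the $\mathcal{F}_s$-measurable factor $V_\cdot(s)^\alpha$, with $X^{(k)}$ independent of $\mathcal{F}_s$), is again a stopping line with at most $p-1$ positive values, and apply the induction hypothesis to each $X^{(k)}$. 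The pieces are reassembled via a cocycle identity for $\ssfrag_\alpha$ — fragmenting $x$ by $\bar x^{(\cdot)}$ and then refragmenting the result at a line by a further family equals fragmenting $x$ by a single composite family — which follows from the relation $\ssfrag_\alpha(x,\bar x^{(\cdot)})(t)=\frag(x,\bar x^{(\cdot)}(w_{(\cdot)}^\alpha t))$ of Remark~\ref{rk:fragproc_continuity}(ii) and the associativity of $\frag$.

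Finally, for a general stopping line I would set $L^{(n)}_i:=(\lfloor 2^nL_i\rfloor+1)2^{-n}$ when $L_i<\infty$ and $L^{(n)}_i:=\infty$ otherwise. Each $L^{(n)}$ is a stopping line (each $L^{(n)}_i$ is a $\mathcal{G}_i$-stopping time, and the compatibility condition survives rounding up because $\Pi$ is a.s.\ nonincreasing for the finer-than order, so two integers in the same block at time $L^{(n)}_i\geq L_i$ were already in the same block at time $L_i$), $L^{(n)}\downarrow L$, and $\mathcal{G}_L\subseteq\mathcal{G}_{L^{(n)}}$. Applying the finite-line case to each $L^{(n)}$ (and noting that the independent $X^{(\cdot)}$ may replace the fresh i.i.d.\ sequence produced for each $L^{(n)}$, being equidistributed and independent of $\mathcal{G}_{L^{(n)}}$) I would pass to the limit: the left-hand side of the display converges by right-continuity of the sample paths of $X$; on the right-hand side $X(L^{(n)})\to X(L)$ a.s.\ with $V_i(L^{(n)}_i)=0$ whenever $V_i(L_i)=0$ ($V_i$ being absorbed at $0$), so that Remark~\ref{rk:fragproc_continuity}(i) together with the stochastic continuity of each $X^{(k)}$ gives $\ssfrag_\alpha(X(L^{(n)}),X^{(\cdot)})(t_j)\restr{m}\to\ssfrag_\alpha(X(L),X^{(\cdot)})(t_j)\restr{m}$ in probability. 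Bounded convergence then yields the desired identity for $L$, the value $\infty$ on a block being handled by the freezing convention (mark $0$, block frozen in $X(L+\cdot)$).

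I expect the main obstacle to be the bookkeeping of the self-similar time-changes in the inductive step for simple lines: unlike the homogeneous case, after restarting at time $s$ each sub-fragmentation $X^{(k)}$ runs on a clock rescaled by the \emph{random} factor $V_\cdot(s)^\alpha$, so one must verify both that residual stopping lines remain stopping lines under these state-dependent rescalings and that the $\ssfrag_\alpha$ operators compose correctly. A related subtlety appears in the limiting step: the map $x\mapsto\E[F(\ssfrag_\alpha(x,X^{(\cdot)}))]$ is not continuous on all of $\markpart{\infty}$, and continuity along the specific sequences $X(L^{(n)})\to X(L)$ that arise is exactly what the preservation of the zero-mark condition provides.
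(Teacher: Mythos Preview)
Your overall scheme matches the paper's --- first handle stopping lines taking finitely many values by induction, then approximate a general line from above --- and your limiting argument via right-continuity, the absorption of $V_i$ at $0$, and Remark~\ref{rk:fragproc_continuity}(i) is essentially the paper's. Two points are worth flagging.

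First, your inductive step is organized differently. You peel off the \emph{smallest} value $s=\min L$, restart at time $s$, and push the residual line $L-s$ into the independent sub-fragmentations $X^{(k)}$ on their rescaled clocks; this forces you to track the random factors $V_\cdot(s)^\alpha$ and to apply the induction hypothesis to stopping lines whose (finite) value set is only $\mathcal{F}_s$-measurable and which are adapted to the filtration of $X^{(k)}$ \emph{enlarged} by the independent $\sigma$-algebra $\mathcal{F}_s$. This does go through, but it is exactly the bookkeeping you identify as the main obstacle. The paper avoids all of it by peeling from the \emph{top}: it applies the induction hypothesis to the truncated line $L\wedge t_{k-1}$, then introduces a short block-splitting lemma (restriction of a copy of $X$ to an $x$-compatible set $A$ and its complement yields two independent copies) with $A=\{i:L_i=t_k\}$, which is $\mathcal{G}_{L\wedge t_{k-1}}$-measurable. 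The $A$-part then only has to run the \emph{deterministic} extra time $t_k-t_{k-1}$ in the original clock, so a single application of the ordinary Markov property finishes the step --- no $\ssfrag_\alpha$ cocycle, no random time-change. What your route buys is that you never leave the language of $\ssfrag_\alpha$; what the paper's route buys is that the index $\alpha$ is completely invisible in the induction.

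Second, a small but genuine gap: your discretization $L^{(n)}_i=(\lfloor 2^nL_i\rfloor+1)2^{-n}$ for $L_i<\infty$ takes values in the \emph{countable} set $\{k2^{-n}:k\geq 1\}\cup\{\infty\}$, so your finite-$p$ induction does not apply to $L^{(n)}$ as written. The paper adds a truncation, setting $L^n_i=\infty$ whenever $L_i>n$, which forces the value set to be finite; you should do the same.
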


\begin{proof}
  See Appendix \ref{sec:proof-stopline}.
\end{proof}

The next step in the analysis of ESSF processes is to bring the index of self-similarity to $0$.
This will be done via the random time changes $(\tau_i(t), i\geq 1, t\geq 0)$ defined above by
\[
\tau_i(t) := \phi_i^{-1}(t), \quad \text{ where } \quad \phi_i(u) := \int_{0}^{u}V_i(s)^{\alpha} \,\d s, \quad u\geq 0.
\]
These time changes enable us to turn an $\alpha$-ESSF into a homogeneous ESSF.
The following proposition makes this claim more precise.

\begin{prop} \label{prop:changing-index}
  Let $X=(\Pi,\mathbf{V})$ be an $\alpha$-ESSF, with $\alpha\in\R$, and let $\beta\in\R$.
  For $i\geq 1$, define the random map $\tau^{\beta}_i$ as the right-continuous inverse of the map $u\mapsto \int_{0}^{u}V_i(s)^{\beta} \,\d s$, that is:
  \[
  \forall t\geq 0,\qquad\tau^{\beta}_i(t) = \left (\int_{0}^{\cdot}V_i(s)^{\beta} \,\d s\right )^{-1}(t) \;=\;\inf \bigg\{u\geq 0,\,\int_{0}^{u}V_i(s)^{\beta} \,\d s=t\bigg\},
  \]
  with the convention $\inf \emptyset=\infty$.
  Then for each $t\geq 0$, $\tau^{\beta}(t)$ is a stopping line, and the process $X\circ\tau^{\beta} := (X(\tau^{\beta}(t)),\,t\geq 0)$ is an $(\alpha-\beta)$-ESSF.
  Furthermore, if $X$ is non-degenerate, then $X\circ\tau^{\beta}$ is also non-degenerate.
\end{prop}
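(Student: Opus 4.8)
The plan is to verify, one after another, the defining properties of an $(\alpha-\beta)$-ESSF for the process $Y:=X\circ\tau^\beta$, using throughout the stopping-line Markov property (Proposition~\ref{prop:stopline}). Note first that $\tau_i^\beta(0)=0$ for every $i$, so $Y(0)=X(0)$ and the notation ``$Y$ under $\P_x$'' is unambiguous. To see that $\tau^\beta(t)$ is a stopping line for each $t\ge0$: the clock $\phi_i^\beta\colon u\mapsto\int_0^uV_i(s)^\beta\,\d s$ is continuous (with the convention $0^\beta=0$), nondecreasing and $\mathcal G_i$-adapted, so $\{\tau_i^\beta(t)\le u\}=\{\phi_i^\beta(u)\ge t\}\in\mathcal G_i(u)$, which gives property~(i) of a stopping line. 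For property~(ii), recall that $\Pi$ has a.s.\ refining sample paths, so if $i\sim^{\Pi(\tau_i^\beta(t))}j$ then $i\sim^{\Pi(s)}j$ for all $s\le\tau_i^\beta(t)$, hence $V_i\equiv V_j$ on $[0,\tau_i^\beta(t)]$, hence $\phi_i^\beta=\phi_j^\beta$ there and $\tau_j^\beta(t)=\tau_i^\beta(t)$ (interchanging the roles of $i$ and $j$ and using once more that blocks only get finer). The same monotonicity remarks show that $L+t$, and the time-change families arising below, remain stopping lines.

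The core of the proof is the $(\alpha-\beta)$-branching identity, obtained by a direct computation. Write $X$ under $\P_x$ as $\ssfrag_\alpha(x,X^{(\cdot)})$ with $X^{(\cdot)}$ an i.i.d.\ sequence started from $(\mathbf{1},1)$. Fix $i$ in the $k$-th block of $x$, with mark $v$. If $v=0$ that coordinate is frozen on both sides; assume $v>0$. By definition of $\ssfrag_\alpha$ the mark component of $X$ satisfies $V_i(u)=v\,\bar V^{(k)}_i(v^\alpha u)$, where $\bar V^{(k)}$ is the mark component of $X^{(k)}$; a change of variables then gives $\phi_i^\beta(u)=v^{\beta-\alpha}\,\bar\phi^{(k),\beta}_i(v^\alpha u)$, whence $\tau_i^\beta(t)=v^{-\alpha}\,\bar\tau^{(k),\beta}_i(v^{\alpha-\beta}t)$, where $\bar\phi^{(k),\beta}$ and $\bar\tau^{(k),\beta}$ are the analogues of $\phi^\beta$ and $\tau^\beta$ built from $X^{(k)}$. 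Substituting this into $X(\tau^\beta(t))$ and comparing the marks and the block structure with the definitions of $\ssfrag_\alpha$ and $\ssfrag_{\alpha-\beta}$ collapses everything to
\[
X\circ\tau^\beta \text{ under }\P_x \;\ed\; \ssfrag_{\alpha-\beta}\!\big(x,\,(X^{(k)}\circ\bar\tau^{(k),\beta})_{k\ge1}\big),
\]
and since each $X^{(k)}\circ\bar\tau^{(k),\beta}$ is by construction an i.i.d.\ copy of $X\circ\tau^\beta$ started from $(\mathbf{1},1)$, this is precisely the branching property with index $\alpha-\beta$. Exchangeability transfers along the same lines: a permutation $\sigma$ acts on the clocks by $\tau_i^\beta\mapsto\tau_{\sigma(i)}^\beta$ because $\phi_i^\beta$ depends only on $V_i$, so $(X\circ\tau^\beta)^\sigma$ is the same measurable functional of $X^\sigma$ that $X\circ\tau^\beta$ is of $X$, and the exchangeability of $X$ does the rest.

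For the Markov property I would apply Proposition~\ref{prop:stopline} at the stopping line $\tau^\beta(s)$: one checks that $\sigma(Y(r):r\le s)\subset\mathcal G_{\tau^\beta(s)}$ and that $\tau^\beta(s+\cdot)=\tau^\beta(s)+\hat\tau^\beta(\cdot)$, where $\hat\tau^\beta$ is the $\tau^\beta$-family associated with the shifted process $X(\tau^\beta(s)+\cdot)$; combined with the conditional identity $X(\tau^\beta(s)+\cdot)\ed\ssfrag_\alpha(X(\tau^\beta(s)),X^{(\cdot)})$, this shows that conditionally on $\mathcal G_{\tau^\beta(s)}$ the process $Y(s+\cdot)$ is distributed as $Y$ started from $Y(s)$ — the simple Markov property — and the strong Markov property then follows by the standard extension to stopping times of the natural filtration of $Y$. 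Càdlàg sample paths pass to $Y$ coordinate by coordinate from the right-continuity and monotonicity of the clocks $\tau_i^\beta$ and the càdlàg property of $X$ (using $V_i(\infty)=0$ at coordinates whose clock explodes). Finally, non-degeneracy is preserved: on the a.s.\ event that $X(s)\in\markpartstar$ for all $s$, every finite block of $X(L)$, for any line $L$, is of the form $B_i(L_i)$ and hence has mark $V_i(L_i)=0$; applying this to $L=\tau^\beta(t)$ for all $t$ shows that $Y$ has sample paths in $\markpartstar$.

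The step I expect to be the main obstacle is the full path regularity of $Y$ — specifically, stochastic continuity, i.e.\ ruling out that the random time substitution creates a fixed-time discontinuity. A jump of $Y$ at a deterministic $t$ would have to come either from a jump of some clock $\tau_i^\beta$ at $t$ (which can only occur at the explosion time $\phi_i^\beta(\zeta_i)$) or from a jump of $X$ at the random time $\tau_i^\beta(t)$; ruling both out requires showing that these random times are atomless and avoid the jump set of $X$. This is where I would invoke the positive self-similar Markov structure of the coordinate processes — after the Lamperti transform they are exponentials of (possibly killed) Lévy processes, which have no fixed discontinuities — together with the branching decomposition, which reduces matters to the process started from $(\mathbf{1},1)$. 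This is the same technical point as in the classical change-of-index result for self-similar fragmentations (cf.\ \cite[Section~3]{Ber06}), and is handled analogously.
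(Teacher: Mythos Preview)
Your approach is essentially the same as the paper's: the core is the change-of-variables computation showing that the clock transforms as $\tau_i^\beta(t)=v^{-\alpha}\,\bar\tau^{(k),\beta}_i(v^{\alpha-\beta}t)$, which collapses $\ssfrag_\alpha$ composed with the time-change into $\ssfrag_{\alpha-\beta}$. The paper packages this slightly differently: rather than first proving the branching identity at the deterministic initial state $x$ and then separately arguing Markov, it applies Proposition~\ref{prop:stopline} directly at the line $\tau^\beta(t)$ and proves in one stroke the identity
\[
X\circ\tau^\beta(t+\cdot)=\ssfrag_{\alpha-\beta}\!\big(X\circ\tau^\beta(t),\,X^{(\cdot)}\circ\tau^{(\cdot)}\big),
\]
which simultaneously encodes the Markov and branching properties; but the underlying computation is identical to yours.

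One remark: the point you flag as the main obstacle --- stochastic continuity of $Y$ --- is not explicitly treated in the paper's proof either; the paper verifies that $\tau^\beta(t)$ is a stopping line, that $Y$ has c\`adl\`ag paths, the branching identity above, and non-degeneracy, and leaves the remaining regularity implicit. Your instinct that this deserves care (and your proposed route via the Lamperti representation of the coordinate processes) is therefore a genuine addition rather than a deviation.
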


\begin{proof}
  See Appendix \ref{sec:proof-changing-index}.
\end{proof}

By bringing the index of self-similarity to $0$ we can transform any ESSF into a homogeneous process.
It follows from the definition that if $X=(\Pi, \mathbf{V})$ is a $0$-ESSF, then $\Pi$ is simply a classical homogeneous fragmentation process (possibly stopped at a random line if the marks can jump to $0$), and the process of marks $\mathbf{V}$ can be seen as a secondary process, evolving jointly and branching simultaneously with $\Pi$.
As in the classical theory, studying the homogeneous case is necessary to describe and give results on the general self-similar case.
We study further those $0$-ESSF in Section \ref{sec:main-res}, but first let us point out how self-similar growth-fragmentations can be understood as special cases of ESSF processes.

\paragraph{Link with self-similar growth-fragmentation processes}

Consider an ESSF $X=(\Pi,\mathbf{V})$ and define the associated point-measure valued process $\tilde{\mathbf{V}}$ by
\[
\tilde{\mathbf{V}}(t) := \sum_{k\geq 1}\delta_{\tilde{V}_k(t)},
\]
where $\tilde{V}_k(t)$ denotes the mark associated with the $k$-th block of $X(t)$.
Assume that the fragmentation events are \emph{binary} and \emph{conservative} in the sense that for all $t\geq 0$ such that $\Pi(t-) \neq \Pi(t)$,
\begin{itemize}
  \item there is a unique block $B_k$ of $\Pi(t-)$ such that $\Pi(t)_{|B_k} \neq \Pi(t-)_{|B_k} = \mathbf{1}_{B_k}$, and this block can be written as the union of two blocks of $\Pi(t)$, say $B_{k_1}$ and $B_{k_2}$.
  \item $\tilde{V}_k(t-)=\tilde{V}_{k_1}(t)+\tilde{V}_{k_2}(t)$ holds, where $k,k_1,k_2$ are as above.
\end{itemize}
Then it follows from the definition of ESSF processes that $\tilde{\mathbf{V}}$ is a self-similar growth-fragmentation process as introduced by Bertoin \cite{Ber17}.
In fact, any self-similar growth-fragmentation process can be built as the process of marks of a well-chosen ESSF.
We will make this claim precise after giving a characterization of ESSF processes, in Section \ref{sec:link-with-gf}.

%%%%%%%%%%%%%%%%%%%%%%%%%%%%%%%%%%%%%%%%%%%%%%%%%%%%%%%%%%%%%%%%%%%%%%%%%%%%%%%%
%%%%%%%%%%%%%%%%%%%%%%%%%%%%%%%%%%%%%%%%%%%%%%%%%%%%%%%%%%%%%%%%%%%%%%%%%%%%%%%%

\section{Main results} \label{sec:main-res}

\subsection{Decomposition of ESSF processes}

Let us consider here a homogeneous $0$-ESSF process $X=(\Pi,\mathbf{V})$, started from $(\mathbf{1},1)$.
We know by Lemma \ref{lem:projmarkov} that it satisfies a projective Markov property, i.e.\ for all $n\in\N$, $X\restr{n}$ defines a Markov process with values in $\partit{n}$.
Let $n\in\N$ be fixed, and define the stopping time 
\[
T_n := \inf\{t\geq 0,\, \Pi(t)\restr{n} \neq \mathbf{1}_n \text{ or } V_1(t) = 0\},
\]
as well as the killed process
\[
\tilde{\xi}_n := (\log V_1(t), \,0\leq t < T_n).
\]
Note that homogeneity implies that the pair $(\tilde{\xi}_n-\log v,T_n)$ has the same distribution under every $\P_{(\mathbf{1},v)}$ for all $v\in S^1\setminus\{0\}$.
Therefore for $t\geq 0$, conditional on $\{T_n>t\}$, the Markov property applied at time $t$ shows that $(\tilde{\xi}_n(t+\cdot)-\tilde{\xi}_n(t),T_n-t)$ has the same distribution as $(\tilde{\xi}_n,T_n)$ under $\P_{(\mathbf{1},1)}$.
This shows that the killed process $\tilde{\xi}_n$ is distributed as
\[
\tilde{\xi}_n\ed(\xi_n(t),\,0\leq t < T'_n),
\]
where $\xi_n$ is a Lévy process and $T'_n$ is an independent exponential random variable.
Up to enriching our probability space, whenever we need to, we will assume that the whole (non-killed) Lévy process $\xi_n=(\xi_n(t),t\geq 0)$ is defined in our space and that the above equality in distribution holds almost surely, that is,
\[
\forall n\geq 1, \qquad \tilde{\xi}_n = (\log V_1(t),\,0\leq t < T_n) = (\xi_n(t),\,0\leq t < T_n) \qquad \text{a.s.}
\]
Note that this implies that if $T_n<\infty$, then $V_1(T_n-) = \exp(\xi_n(T_n))> 0$.
Now for $n\in \N$ such that $T_n<\infty$ almost surely, consider $D_n$, the dislocation (or freezing) at time $T_n$, defined by
\[
D_n := (\Pi(T_n),\mathbf{V}(T_n)/V_1(T_n-))\restr{n} \in\markpart{n},
\]
where the division $\mathbf{V}(T_n)/V_1(T_n-)$ is to be understood coordinate-wise.
Equivalently, $D_n$ is the unique random marked partition such that
\[
X(T_n)\restr{n} = \frag\!\big(X(T_n-)\restr{n},D_n\big),
\]
with a slight abuse of notation in this case since $X(T_n-)\restr{n}$ has only one block ($D_n$ is not a sequence but additional terms are useless to define a fragmentation of a single block).

Note that this implies that $D_n$ has the same distribution under every $\P_{(\mathbf{1},v)}$ for all $v\in S^1\setminus\{0\}$.
Thus for any bounded measurable maps $g:\R\to\R$, $h:\markpart{n}\to\R$ and $t\geq 0$, applying the Markov property at time $t\geq 0$, one gets
\[
\E_{(\mathbf{1},1)}[g(\xi_n(t))\1_{T_n>t} h(D_n)] = \E_{(\mathbf{1},1)}[g(\xi_n(t))\1_{T_n>t}] \, \E_{(\mathbf{1},1)}[h(D_n)],
\]
which shows that the killed Lévy process $(\xi_n,T_n)$ and the marked partition $D_n$ are independent.
Let us define $\mathcal{D}_n$ as the law of $D_n$, and notice also that exchangeability of $X\restr{n}$ implies that $\mathcal{D}_n$ is an exchangeable probability measure on $\markpart{n}$.

Since $(\xi_n,T_n)$ is a killed Lévy process, one can define uniquely $d_n\in\R$, $\beta_n\geq 0$, $J_n\geq 0$ and $\lambda_n$ a measure on $\R\setminus\{0\}$ satisfying $\int 1\wedge y^2 \, \lambda_n(\d y) < \infty$, such that
\begin{itemize}
  \item the process $\xi_n$ is a Lévy process with characteristic exponent
  \[
  \psi_n(\theta) := \log\E[\e^{i\theta\xi_n(1)}] = id_n\theta -\frac{\beta_n}{2}\theta^2 + \int_{\R}\left (\e^{i\theta y} - 1 - i\theta y \1_{\abs{y}\leq 1}\right )\,\lambda_n(\d y),
  \]
  \item $\xi_n$ is killed at rate $J_n = 1/\E[T_n]$, which may be $0$ if $T_n = \infty$ almost surely.
\end{itemize}

\begin{remark}\label{rk:construction-from-characs}
  Note that knowing $(\psi_n, J_n, \mathcal{D}_n)$ for $n\in\N$ is enough to reconstruct the process $X$.
  Indeed, starting from $(\mathbf{1},v)$, the process $X\restr{n}$ up to time $T_n$ has distribution equal to that of
  \[
  Y_n := \big((\mathbf{1}_n, v\e^{\xi_n(t)}),\, 0\leq t < T_n\big),
  \]
  and at time $T_n$ jumps to $(\Pi, v\e^{\xi_n(T_n-)}\mathbf{V})$, where $(\Pi,\mathbf{V})$ is independently drawn according to $\mathcal{D}_n$.
  
  By the branching property, one only needs to iterate this construction at each jump time, independently for each marked block, to get the whole process $X\restr{n}$.
  By Kolmogorov's extension theorem -- since $(X\restr{m})\restr{n}=X\restr{n}$ for each $n\leq m$ -- these distributions characterize the distribution of $X$.
\end{remark}

Let us now state our main result, which identifies the form that those characteristics can take.

\begin{theorem} \label{thm:main}
  Let $X$ be a non-degenerate $0$-ESSF and for each $n$, write $(\psi_n, J_n, \mathcal{D}_n)$ for the characteristics describing the law of $X\restr{n}$.
  Then there is a unique quadruple $(c,d,\beta,\Lambda)$, where $c,\beta\geq 0$, $d\in\R$, and $\Lambda$ is a measure on $\markmasspart\setminus\{(\mathbf{1},1)\}$, which satisfies necessarily
  \begin{equation} \label{eq:lambda-integrability}
  \int_{\markmasspart}\big(1-s_1\1_{v_1>0} + (\log v_1)^2 \wedge 1\big)\,\Lambda(\d \mathbf{z}) < \infty,
  \end{equation}
  such that for all $n\in\N$,
  \begin{enumerate}[(i)]
    \item $\displaystyle\psi_n(\theta) = id\theta -\frac{\beta}{2}\theta^2 + \int_{\markmasspart}\bigg(\sum_{\substack{j\geq 1\\ v_j>0}}(s_j)^n\big(\e^{i\theta\log v_j} - 1\big) - i\theta\log v_1 \1_{\abs{\log v_1}\leq 1}\bigg)\,\Lambda(\d \mathbf{z}).$
    \item $J_n = nc + \displaystyle \int_{\markmasspart}\Big (1-\sum_{\substack{i\geq 1\\ v_i>0}}(s_i)^n\Big )\,\Lambda(\d \mathbf{z})$.
    \item if $J_n>0$, $\displaystyle \mathcal{D}_n=\frac{1}{J_n}\Big(\sum_{i=1}^{n}c\delta_{\mathfrak{e}^{n}_i} + \int_{\markmasspart}\rho^{n}_{\mathbf{z}}\big(\cdot\cap\{\pi\neq\mathbf{1}_n\text{ or }(\pi,\mathbf{v})=(\mathbf{1}_n,0)\}\big)\,\Lambda(\d\mathbf{z})\Big)$, where $\rho^{n}_{\mathbf{z}}$ is the paintbox process defined in Section \ref{sec:paintbox}, and $\delta_{\mathfrak{e}^{n}_i}$ denotes the Dirac point measure on $\mathfrak{e}^{n}_i$, the marked partition defined as
    \[
    \mathfrak{e}^{n}_i := \Big(\big\{[n]\setminus\{i\},\{i\}\big\},\, (1,\ldots,1,\underbrace{0}_{i-\text{th index}},1,\ldots,1) \Big).
    \]
  \end{enumerate}
  Conversely if $c,\beta \geq 0$, $d\in\R$ and $\Lambda$ is a measure on $\markmasspart\setminus\{(\mathbf{1},1)\}$, satisfying \eqref{eq:lambda-integrability}, then there exists a $0$-ESSF with characteristics as above.
\end{theorem}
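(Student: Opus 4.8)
The plan is to adapt Bertoin's proof of the Lévy–Khinchin representation of homogeneous fragmentations (\cite[Chapter~3]{Ber06}) to the present setting, the new feature being that the tagged block carries a mark $V_1$ which, by the discussion preceding the statement, evolves as the exponential of a Lévy process (Lamperti's theorem in the homogeneous case). Throughout, fix the non-degenerate $0$-ESSF $X=(\Pi,\mathbf{V})$ started from $(\mathbf{1},1)$ and recall from Remark~\ref{rk:construction-from-characs} that the triples $(\psi_n,J_n,\mathcal{D}_n)$ determine its law. The strategy for the direct implication is: extract from the family $(\mathcal{D}_n)_n$ a single $\sigma$-finite exchangeable measure encoding all dislocation events, apply a $\sigma$-finite version of the paintbox representation of Proposition~\ref{prop:paintbox} to it to produce $c$ and $\Lambda$, and read off $d$ and $\beta$ from the Lévy triples $\psi_n$; for the converse, construct the process from $(c,d,\beta,\Lambda)$ level by level and check it is a non-degenerate $0$-ESSF.

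For the direct implication, I would first record the compatibility relations forced by $X\restr{n}=(X\restr{m})\restr{n}$. Since the first event affecting $[n]$ is the first event affecting $[m]$ whose $[n]$-restriction is nontrivial, the branching property yields the key identity $J_n=J_m\,\mathcal{D}_m\big(\{x:\,x\restr{n}\neq(\mathbf{1}_n,1)\}\big)$ for $m\geq n$, together with the fact that $J_m\mathcal{D}_m$, restricted to $\{x\restr{n}\neq(\mathbf{1}_n,1)\}$ and pushed forward to $[n]$, equals $J_n\mathcal{D}_n$. Hence the measures $J_n\mathcal{D}_n$ form a consistent family, whose projective limit is a $\sigma$-finite exchangeable measure $M$ on $\markpartstar\setminus\{(\mathbf{1},1)\}$, finite on $\{x\restr{2}\neq(\mathbf{1}_2,1)\}$ (with mass $J_2$) and concentrated on $\markpartstar$ by non-degeneracy of $X$. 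A $\sigma$-finite strengthening of Proposition~\ref{prop:paintbox}, analogous to the representation of fragmentation dislocation measures in \cite[Chapter~3]{Ber06}, then gives a unique $c\geq0$ and a unique $\sigma$-finite measure $\Lambda$ on $\markmasspart$ with $M=c\sum_{i\geq1}\delta_{\mathfrak{e}_i}+\int_{\markmasspart}\rho_{\mathbf{z}}(\cdot)\,\Lambda(\d\mathbf{z})$, where $\mathfrak{e}_i$ is the infinite analogue of $\mathfrak{e}^n_i$; here $c$ is the erosion coefficient of $\Pi$. The Gaussian component of each $\psi_n$ gives $\beta$ (the same for all $n$); the Lévy measure of $\xi_n$ equals the measure produced by the $(s_j)^n$-reweighted dislocation atoms of $M$ plus a part independent of $n$, and this latter part, pulled back through $v\mapsto\log v$, is declared to be the restriction of $\Lambda$ to $\{\mathbf{z}=(\mathbf{1},v):\,v>0\}$ — the only piece of $\Lambda$ not seen by $M$; the compensator convention of (i) then fixes $d$. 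Finiteness of $M\big(\{x\restr{2}\neq(\mathbf{1}_2,1)\}\big)$ together with the fact that each $\psi_n$ is a genuine Lévy exponent forces the integrability condition \eqref{eq:lambda-integrability}, and formulas (i)--(iii) follow by bookkeeping the events affecting $[n]$ (a block of $[n]$ dislocated by a paintbox-$\mathbf{z}$, or a singleton eroded) against this representation; uniqueness of $(c,d,\beta,\Lambda)$ follows from the same identities and the injectivity in Proposition~\ref{prop:paintbox}.

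For the converse, given $(c,d,\beta,\Lambda)$ satisfying \eqref{eq:lambda-integrability}, I would first check that formulas (i)--(iii) define, for each $n$, a valid killed Lévy exponent $\psi_n$ (the integrand in (i) being dominated by a constant times $1\wedge\theta^2$ by \eqref{eq:lambda-integrability}), a finite rate $J_n$, and a probability measure $\mathcal{D}_n$ (whose total mass is $J_n$ by (ii)). Then build $X\restr{n}$ as in Remark~\ref{rk:construction-from-characs}: run an independent copy of $\e^{\xi_n}$ on the mark of each block until an independent $\Exp(J_n)$ time, dislocate the affected block according to a fresh draw from $\mathcal{D}_n$, and iterate on the sub-blocks — well-defined, just as in the classical construction of homogeneous fragmentations. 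A (somewhat laborious) computation shows that these laws are consistent, $(X\restr{m})\restr{n}=X\restr{n}$, so Kolmogorov's extension theorem produces a process $X$ on $\markpart{\infty}$. It then remains to verify that $X$ admits a càdlàg strong Markov version, is stochastically continuous, exchangeable and satisfies the self-similar branching property with index $0$: exchangeability and the branching identity are built into the construction, stochastic continuity is inherited from the $X\restr{n}$, and the path regularity follows from the continuity property of $\ssfrag_0$ in Remark~\ref{rk:fragproc_continuity}(i) together with a tightness argument on the compact space $\markpart{\infty}$. Finally, non-degeneracy is checked by a Borel–Cantelli argument: under this dynamics every finite block is, after finitely many dislocations, a singleton produced by erosion, hence frozen with mark $0$.

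The main difficulty is the construction of $\Lambda$ in the direct implication: one must show that the consistent family of finite exchangeable measures $(J_n\mathcal{D}_n)_n$ assembles into a single $\sigma$-finite exchangeable measure on $\markpartstar$, and simultaneously disentangle from it the three "continuously acting" parameters — the erosion coefficient $c$, the mark drift $d$ and the mark Gaussian part $\beta$ — which are precisely the ingredients of the dynamics not captured by $\Lambda$. Cutting the relevant measures down to $\{x\restr{2}\neq(\mathbf{1}_2,1)\}$ is what makes them finite and licenses Proposition~\ref{prop:paintbox}, and controlling the error terms when letting $n\to\infty$ (and in matching the compensator in (i)) is exactly where the integrability condition \eqref{eq:lambda-integrability} is needed. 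On the converse side, the only non-routine point is establishing the path regularity (càdlàg, strong Markov) of the Kolmogorov-extended process.
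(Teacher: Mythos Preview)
Your overall strategy matches the paper's: build a single $\sigma$-finite exchangeable measure from the $(J_n\mathcal{D}_n)$ by a projective-limit argument, decompose it via a $\sigma$-finite paintbox (extracting $c$ and the ``macroscopic'' part of $\Lambda$), then recover the remaining piece of $\Lambda$ supported on $\{(\mathbf{1},v):v>0\}$ as the $n\to\infty$ limit $\lambda_\infty$ of the Lévy measures $\lambda_n$, and finally read off $\beta$ and $d$. The converse sketch is also in the right spirit.

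However, your key compatibility relation is stated with the \emph{wrong event}, and this is not a cosmetic slip. You write $J_n=J_m\,\mathcal{D}_m\big(\{x:\,x\restr{n}\neq(\mathbf{1}_n,1)\}\big)$ and claim $M$ has mass $J_2$ on $\{x\restr{2}\neq(\mathbf{1}_2,1)\}$. This is false: $T_n$ is the first time $\Pi\restr{n}\neq\mathbf{1}_n$ \emph{or} $V_1=0$, so a level-$m$ dislocation with $\pi\restr{n}=\mathbf{1}_n$ but $v_1\notin\{0,1\}$ is \emph{not} a level-$n$ event --- at level $n$ it is merely a jump of the Lévy process $\xi_n$, absorbed into $\lambda_n$, not into $\mathcal{D}_n$. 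The correct identity (which the paper derives) is
\[
J_m\,\mathcal{D}_m\big(\{x\restr{n}\in\cdot\}\cap\{\pi\restr{n}\neq\mathbf{1}_n\text{ or }v_1=0\}\big)=J_n\mathcal{D}_n,
\]
and the projective limit must be taken along the events $\{\pi\restr{n}\neq\mathbf{1}_n\text{ or }v_1=0\}$, yielding a measure $\mathcal{D}$ with $\mathcal{D}(\pi=\mathbf{1},\,v_1\neq 0)=0$. With your event, the projective system is not consistent (the push-forward to level $n$ over-counts the mark-only jumps), and even if you forced a limit it would in general assign \emph{infinite} mass to $\{x\restr{2}\neq(\mathbf{1}_2,1)\}$, precisely because the Lévy measure of $\xi_2$ can be infinite. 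This is exactly the subtlety that forces the paper to treat the dislocation measure $\mathcal{D}$ and the residual Lévy measure $\lambda_\infty$ separately before reassembling them into $\Lambda$; once you replace your event by the correct one, the rest of your outline goes through essentially as in the paper.
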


\begin{proof}
  See Appendix \ref{sec:proof-main}.
\end{proof}

\begin{remark}~
  \begin{itemize}
    \item 
    While this is not immediate, integrability condition \eqref{eq:lambda-integrability} is enough to show that the integrals appearing in the statement of the theorem are well-defined.
    This is explicit in the last part of the proof -- for the converse statement.
    
    \item 
    It is an immediate consequence of the theorem that the process describing the block of $X$ containing $1$ can be constructed in the following Poissonian way.
    Consider $\mathcal{N}$ a Poisson point process on $[0,\infty)\times\markpartstar$ with intensity
    \[
    \d t\otimes\Big(\sum_{i=1}^{\infty}c\delta_{\mathfrak{e}^{n}_i} + \int_{\markmasspart}\rho^{n}_{\mathbf{z}}\big(\cdot\big)\,\Lambda(\d\mathbf{z})\Big),
    \]
    and define
    \[
    \mathcal{N}' := \big\{(t,\log v_1), \; (t,x)\in\mathcal{N} \text{ with }x =(\pi,\mathbf{v}) \text{ and } v_1\notin \{0,1\}\big\},
    \]
    which has intensity $\d t \otimes\lambda_1$, where $\lambda_1$ is defined by
    \[
    \int_{\R} f\,\d\lambda_1 = \int_{\markmasspart}\sum_{\substack{j\geq 1\\ v_j\notin\{0,1\}}}s_j \,f(\log v_j)\,\Lambda(\d\mathbf{z}),
    \]
    and is the Lévy measure of the process $\xi_1$.
    It is clear that one can build a Lévy process $(\xi_1(t),t\geq 0)$ having characteristic exponent $\psi_1$ given by \textit{(i)} in the theorem and whose point process of jumps is exactly $\mathcal{N}'$.
    
    Define $(B(t),t\geq 0)$ as the $2^{\N}$-valued process given by
    \[
    B(t) = \bigcap_{\substack{0\leq s <t\\(s,x)\in\mathcal{N}}}A(x),
    \]
    where $A(x)\subset \N$ denotes the block of $x$ containing $1$.
    Also, for any $n\in\N$, define
    \[
    \widetilde{T}_n := \inf \big\{t\geq 0, \; (t,x)\in\mathcal{N}\text{ with }x=(\pi,\mathbf{v})\text{ such that }\pi\restr{n}\neq \mathbf{1}\text{ or }v_1=0\big\} \sim \Exp(J_n).
    \]
    Now $(B(t),\e^{\xi_1(t)},0\leq t < \widetilde{T}_1)$ is distributed as the marked block containing $1$ in $X$ and by construction, we also get the following equality in distribution
    \[
    \Big(X(T_n)\restr{n},\,n\in\N\Big) \ed \Big(\big(\pi_n, \,\e^{\xi_1(\widetilde{T}_n-)}\mathbf{v}_n\big)\restr{n},\,n\in\N\Big) ,
    \]
    where $x_n=(\pi_n,\mathbf{v}_n)$ is the element of $\markpartstar$ such that $(\widetilde{T}_n,x_n)\in \mathcal{N}$.
  \end{itemize}
\end{remark}

\medskip

Combining Theorem \ref{thm:main} with Proposition \ref{prop:changing-index}, we get the following characterization of all ESSF processes.

\begin{corollary}
  Let $X$ be a non-degenerate $\alpha$-ESSF.
  Then there exists a unique quadruple $(c,d,\beta,\Lambda)$ as in Theorem~\ref{thm:main} such that if $(\tau^{\alpha}(t),t\geq 0)$ are the stopping lines as defined in Proposition~\ref{prop:changing-index}, then $X\circ\tau^{\alpha}$ is a homogeneous ESSF with characteristics $(c,d,\beta,\Lambda)$.
\end{corollary}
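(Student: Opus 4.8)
The plan is to derive the corollary as an essentially formal consequence of Theorem~\ref{thm:main} and Proposition~\ref{prop:changing-index}, so the work is in carefully chaining the two statements and verifying uniqueness. First I would invoke Proposition~\ref{prop:changing-index} with $\beta = \alpha$: since $X$ is a non-degenerate $\alpha$-ESSF, the time-changed process $X\circ\tau^{\alpha} := (X(\tau^{\alpha}(t)),\,t\geq 0)$ is an $(\alpha-\alpha)$-ESSF, i.e.\ a homogeneous $0$-ESSF, and it is again non-degenerate. Then I would apply Theorem~\ref{thm:main} to the homogeneous process $X\circ\tau^{\alpha}$ to obtain a unique quadruple $(c,d,\beta,\Lambda)$ with $c,\beta\geq 0$, $d\in\R$, and $\Lambda$ a measure on $\markmasspart\setminus\{(\mathbf{1},1)\}$ satisfying the integrability condition \eqref{eq:lambda-integrability}, describing the characteristics $(\psi_n,J_n,\mathcal{D}_n)$ of $(X\circ\tau^{\alpha})\restr{n}$ for each $n$. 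This immediately gives the existence part of the statement.

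For uniqueness, I would argue that the quadruple associated with $X$ by the corollary is forced. Suppose $(c,d,\beta,\Lambda)$ and $(c',d',\beta',\Lambda')$ both have the property that $X\circ\tau^{\alpha}$ is a homogeneous ESSF with those respective characteristics. Since $X\circ\tau^{\alpha}$ is a single well-defined process, Remark~\ref{rk:construction-from-characs} shows its law is determined by the family $(\psi_n,J_n,\mathcal{D}_n)_{n\in\N}$, and conversely Theorem~\ref{thm:main} asserts that the quadruple producing a given family of characteristics is unique; hence $(c,d,\beta,\Lambda)=(c',d',\beta',\Lambda')$. The only subtlety is that $\tau^{\alpha}$ depends only on $\mathbf{V}$ and $\alpha$, so ``$X\circ\tau^{\alpha}$'' is unambiguous once $X$ is fixed, and one should note that the stopping lines $\tau^{\alpha}(t)$ are well-defined by the first assertion of Proposition~\ref{prop:changing-index}.

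The main (minor) obstacle is really just a matter of bookkeeping: making sure that the phrase ``$X\circ\tau^{\alpha}$ is a homogeneous ESSF with characteristics $(c,d,\beta,\Lambda)$'' is interpreted consistently with the setup of Theorem~\ref{thm:main} — that is, that $X\circ\tau^{\alpha}$ is started from $(\mathbf{1},1)$ (or, by the projective Markov property of Lemma~\ref{lem:projmarkov} and homogeneity, that its restrictions have the claimed characteristics regardless of the starting mark, as discussed in the paragraph preceding Theorem~\ref{thm:main}). Since the corollary is stated at the level of the process started from $(\mathbf{1},1)$ implicitly through the characteristics $(\psi_n,J_n,\mathcal{D}_n)$, no genuine new computation is needed; I would simply remark that the conclusion follows by combining the two cited results and that uniqueness is inherited directly from the uniqueness clause of Theorem~\ref{thm:main}.
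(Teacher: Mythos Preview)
Your proposal is correct and matches the paper's approach exactly: the paper states the corollary without a separate proof, introducing it simply as the result of ``combining Theorem~\ref{thm:main} with Proposition~\ref{prop:changing-index}'', which is precisely the chaining you carry out. Your additional remarks on uniqueness and the well-definedness of $\tau^{\alpha}$ only make explicit what the paper leaves implicit.
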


Let us point out that condition \eqref{eq:lambda-integrability} is surprisingly nonrestrictive.
There are no integrability assumptions concerning the marks of the smallest blocks (with labels greater than $1$).
Consequently, the point measure $\sum_k\delta_{\tilde{V}_k(t)}$, where $\tilde{V}_k(t)$ denotes the mark of the $k$-th block in $X(t)$, might assign infinite mass to any interval $(a,b)\subset [0,\infty)$ for any $t>0$.
Indeed it suffices for instance that $\Lambda(\d \mathbf{z})$ be of the form
\[
\int_{\markmasspart}\prod_{i\geq 1}F_i(s_i,v_i)\,\Lambda(\d \mathbf{z}) = \int_{(0,1)\times S^{1}} F_1(s_1,v_1)\,\E\bigg[\prod_{i\geq 2}F_i\Big(\frac{1-s_1}{2^{i-1}},Z_i\Big)\bigg]\,\nu(\d z_1),
\]
where $\nu$ is a measure on $(0,1)\times S^{1}$ with infinite mass and satisfying
\[
\int_{(0,1)\times S^{1}}\big(1-s_1\1_{v_1>0} + (\log v_1)^2 \wedge 1\big)\,\nu(\d z_1) < \infty,
\]
and $Z_2,Z_3,\ldots$ are i.i.d.\ $\Exp(1)$ random variables.
On the other hand, if one assumes an integrability condition such as
\[
\int_{\markmasspart}\big(\sum_{i\geq 1} v_i^{\theta}\big) - 1 -\theta\log v_1 \1_{\abs{\log v_1}\leq 1}\,\Lambda(\d \mathbf{z}) < \infty
\]
for some $\theta\in\R$, then one observes a process of point measures $(\sum_k\delta_{\tilde{V}_k(t)},t\geq 0)$ that is nice in the sense that for all $t\geq 0$, $\E\sum_k\tilde{V}_k(t)^{\theta}<\infty$.
This is the object of the next section.

\subsection{Absorption in finite time}
Consider here a non-degenerate $\alpha$-ESSF with characteristics $(c,d,\beta,\Lambda)$, started from $(\mathbf{1},1)$.
We are interested in the case where the pssMp $V_1$ reaches $0$ in finite time and, if $T_i$ denotes the hitting time of $0$ by the process $V_i$, we aim at giving a sufficient condition for which
\[
\sup_{i\in\N} T_i < \infty \qquad \text{a.s.}
\]
If this holds, then at this random time the process is frozen in a state $X(\infty)=(\Pi(\infty),\mathbf{V}(\infty))$ where $\mathbf{V}(\infty)=\mathbf{0}=(0,0,\ldots)$, and we say the process $X$ is \emph{absorbed in finite time}.
We first put aside a trivial case and assume that
\[
c>0 \quad \text{ or }\quad \int_{\markmasspart}(1-s_1)\,\Lambda(\d \mathbf{z}) > 0,
\]
since otherwise $\Pi$ would be almost surely constant equal to the coarsest partition $\{\N\}$.
Recall that a classical self-similar fragmentation which is absorbed in finite time (this is always true when $\alpha < 0$ \cite[Proposition 2]{Ber03}) is always totally fragmented in the limit, in the sense that $\Pi(\infty)$ is the partition of $\N$ into singletons.
Clearly in our case, because of the possible freezing of blocks at dislocation events, $\Pi(\infty)$ is almost surely totally fragmented if and only if
\[
\forall i\geq 1, \,s_i>0\implies v_i>0 \qquad \Lambda\text{-a.e.\ on }\markmasspart.
\]
A stronger property than absorption in finite time is the following: we say $X$ has \emph{finite total length} if
\begin{equation*}  \int_{0}^{\infty} \# X(t) \, \d t <\infty \quad \text{a.s.},
\end{equation*}
where $\#x$ denotes the number of blocks with positive mark in the marked partition $x$.
One can interpret this quantity as the total length of the tree describing the genealogy of blocks in the fragmentation, hence the name.
Note that this implies that for a fixed time $t\geq 0$, $\# X(t)$ is almost surely finite, which is well-known \cite[Proposition 2]{Ber03} in the classical self-similar fragmentation case for $\alpha < -1$.

In this section our aim is to provide sufficient conditions for ESSF processes to be absorbed in finite time and to have finite total length.
The following result extends the classical setting, and makes use of natural martingales -- the so-called additive martingales -- appearing in the homogeneous case.
In order to be able to state it, we need a couple of additional definitions.
For a marked partition $x=(\pi,\mathbf{v})\in\markpart{n}$ with $n\in\N\cup\{\infty\}$, and $\theta\in\R$, let us write
\begin{equation} \label{eqdef:stheta}
S_\theta(x) := \sum_{k\in\N}\tilde{v}_k^{\theta},
\end{equation}
where $\tilde{v}_k$ denotes the mark associated with the $k$-th block of $x$.
Let us also introduce $\kappa : \R\to(-\infty,\infty]$ defined by
\begin{equation} \label{eqdef:kappatheta}
\kappa(\theta) := d\theta + \frac{\beta}{2}\theta^2 + \int_{\markmasspart}\Big(\big(\sum_{i\geq 1} v_i^{\theta}\big) -1 -\theta \log v_1 \1_{\abs{\log v_1} \leq 1} \Big)\,\Lambda(\d \mathbf{z}).
\end{equation}
Note that the integral in the last display is well-defined with values in $(-\infty,\infty]$, since
\begin{align*}
1 +\theta \log v_1 \1_{\abs{\log v_1} \leq 1} - \sum_{i\geq 1} v_i^{\theta}  &\leq \big(1+\theta\log v_1\1_{\abs{\log v_1} \leq 1}-v_1^{\theta} \big)_{+}\\
&\leq C \big((\log v_1^2)\wedge 1\big)
\end{align*}
where $C$ is a positive constant which depends on $\theta$, so the negative part of the integrand in the definition of $\kappa$ is $\Lambda$-integrable.

\begin{prop} \label{prop:compactness}
  Let $X$ be a non-degenerate $0$-ESSF with characteristics $(c,d,\beta,\Lambda)$ started from $(\mathbf{1},1)$.
  For all $\theta\in\R$ and $t\geq 0$,
  \[
  \E[S_\theta(X(t))] = \e^{t\kappa(\theta)},
  \]
  with $S_\theta$ and $\kappa(\theta)$ respectively defined as in \eqref{eqdef:stheta} and \eqref{eqdef:kappatheta}.
  These quantities may be infinite,
  meaning that if $\kappa(\theta)=\infty$, then $\E[S_\theta(X(t))]=\infty$ for all $t>0$.
  If there is $\theta\in\R$ such that $\kappa(\theta) < \infty$, then the process
  \[
  \big(\e^{-t\kappa(\theta)}S_\theta(X(t)),\; t\geq 0\big)
  \]
  is a martingale.
  If there is $\theta\neq 0$ such that $\kappa(\theta) < 0$, then for any $\alpha\in\R$:
  \begin{itemize}
    \item if $-\alpha/\theta > 0$, the $\alpha$-ESSF with characteristics $(c,d,\beta,\Lambda)$ is absorbed in finite time.
    \item if $-\alpha/\theta \geq 1$, the $\alpha$-ESSF with characteristics $(c,d,\beta,\Lambda)$ has finite total length.
  \end{itemize}
\end{prop}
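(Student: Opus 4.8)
The plan is to establish the three assertions in order of increasing depth, the core being the moment identity $\E[S_\theta(X(t))] = \e^{t\kappa(\theta)}$, from which the martingale property and both absorption statements will follow by standard arguments.

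First I would prove the moment identity. Fix $\theta\in\R$ and consider the block containing $1$; by exchangeability and the paintbox/self-similar structure, $\E[S_\theta(X(t))]$ can be computed by a ``tagged fragment'' argument (spine decomposition). More precisely, using the Poissonian construction in the remark following Theorem~\ref{thm:main}, one writes a renewal-type equation: up to the first dislocation time $\widetilde{T}_1$ (along the block containing $1$), the mark evolves as $\e^{\xi_1}$, and at $\widetilde{T}_1$ the block splits according to $\Lambda$ (plus erosion atoms $\mathfrak{e}^n_i$ governed by $c$). Let $f(t) := \E[S_\theta(X(t))]$. Conditioning on the first jump of the Poisson process $\mathcal N$ driving the block of $1$, and using the branching property together with the fact that $S_\theta$ is additive over blocks, one gets
\[
f(t) = \E\!\big[\e^{\theta\xi_1(t)}\1_{t<\widetilde T_1}\big] + \int_0^t \E\!\big[\e^{\theta\xi_1(s)}\big]\,\mathrm{d}s \int_{\markmasspart}\Big(\sum_{i:\,v_i>0} s_i\, v_i^{\theta}\, f(t-s)\Big)\,\big(\text{intensity}\big).
\]
Differentiating (or taking Laplace transforms in $t$) and comparing with \eqref{eqdef:kappatheta}, one identifies the exponential growth rate as $\kappa(\theta)$; the bound displayed just before the proposition shows the negative part of the integrand is $\Lambda$-integrable, so the computation makes sense with values in $(-\infty,\infty]$, and in the case $\kappa(\theta)=\infty$ the same renewal equation forces $f(t)=\infty$ for $t>0$. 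Alternatively — and this is probably cleaner — one checks the identity first for $X\restr n$ using the explicit characteristics $(\psi_n,J_n,\mathcal D_n)$ from Theorem~\ref{thm:main}: there $S_\theta(X(t)\restr n)$ restricted to the block of $1$ is $\e^{\theta\xi_n(t)}$ on $\{T_n>t\}$, and one unfolds the recursion at $T_n$, then lets $n\to\infty$ by monotone convergence. I expect this $n$-by-$n$ route to be the main technical grind: one must verify that summing the per-block contributions reproduces exactly the three terms $d\theta$, $\frac\beta2\theta^2$, and the $\Lambda$-integral in $\kappa$, using parts (i)--(iii) of Theorem~\ref{thm:main}.

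The martingale property is then immediate: if $\kappa(\theta)<\infty$, apply the stopping-line Markov property (Proposition~\ref{prop:stopline}) at a fixed time $t$ — or simply the ordinary Markov property of the homogeneous $0$-ESSF — to get, for $s\le t$,
\[
\E\big[S_\theta(X(t))\mid \mathcal G_s\big] = \sum_{k}\tilde v_k(s)^{\theta}\,\E_{(\mathbf 1,1)}\big[S_\theta(X(t-s))\big] = S_\theta(X(s))\,\e^{(t-s)\kappa(\theta)},
\]
where $\mathcal G_s$ is the natural filtration (the branching property decomposes $X(t)$ into independent copies issued from each block of $X(s)$, rescaled multiplicatively by the current marks; homogeneity means no time-change). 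Multiplying by $\e^{-t\kappa(\theta)}$ gives the martingale.

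For absorption in finite time and finite total length, suppose $\kappa(\theta)<0$ for some $\theta\ne 0$. By Proposition~\ref{prop:changing-index}, the $\alpha$-ESSF with the same characteristics is $X\circ\tau^{-\alpha}$ applied to the homogeneous one — i.e.\ one passes to the homogeneous process and time-changes each block $i$ by the inverse of $u\mapsto\int_0^u V_i(s)^{\alpha}\mathrm ds$. The hitting time of $0$ for $V_i$ in the $\alpha$-process equals $\int_0^{\zeta_i^{\mathrm{hom}}} V_i^{\mathrm{hom}}(s)^{\alpha}\,\mathrm ds$ along the corresponding spine of the homogeneous process; so $\sup_i T_i<\infty$ amounts to a uniform bound on these integrals over all blocks, and finite total length amounts to $\int_0^\infty \#X(t)\,\mathrm dt = \sum_i \int (V_i^{\mathrm{hom}})^{\alpha}\,\mathrm ds$-type sums being finite. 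The strategy is the classical one (as in \cite{Ber03}): control $\E\big[\sum_{\text{blocks }k}\big(\int_0^{\ell_k}\tilde v_k^{\alpha}\big)^{p}\big]$ for a suitable $p$, using the martingale to bound moments of $S_\theta$ and a chaining/stopping-line argument over successive generations of the fragmentation. The condition $-\alpha/\theta>0$ ensures $(\tilde v_k)^{\alpha}$ is small exactly when $(\tilde v_k)^{\theta}$ is (both shrink as the block's mark tends to $0$), so that $\E[S_\theta(X(t))]=\e^{t\kappa(\theta)}\to 0$ exponentially translates into geometric decay of the relevant sums; the strengthening to $-\alpha/\theta\ge1$ is what upgrades summability of block lifetimes from ``each finite'' to ``total finite'', via a comparison $\int_0^{\ell}\tilde v^{\alpha}\le \ell\cdot(\sup \tilde v)^{\alpha}$ combined with the fact that $\sup$ of the mark over a block's lifetime is controlled by its initial mark. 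The main obstacle, as in the classical theory, is making the generation-by-generation moment bound uniform: one sets up a supermartingale along the genealogical tree using $\e^{-t\kappa(\theta)}S_\theta$ and applies an optional-stopping / Borel–Cantelli argument along stopping lines $L^{(m)}$ chosen so that every block has mark below $\varepsilon^m$ at time $L^{(m)}$; then $\sum_m \E[\text{total length between }L^{(m-1)}\text{ and }L^{(m)}]<\infty$ by the exponential decay, and summing over $m$ gives the result. I expect this last uniformity step — controlling infinitely many blocks simultaneously — to be the genuinely hard part, with the moment identity being the key enabling tool.
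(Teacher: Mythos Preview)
Your route to the moment identity via the restrictions $X\restr n$ and the characteristics $(\psi_n,J_n,\mathcal D_n)$ is essentially what the paper does: it constructs sets $F_n(t)\supset[n]$ that follow $n$ integers into each newly created block, identifies $S^{(n)}_\theta(t):=S_\theta(X(t)_{|F_n(t)})$ with a branching particle system where particles move as $\xi_n$ and branch at rate $J_n$ according to $\mathcal D_n$, proves $\E[S^{(n)}_\theta(t)]=\e^{t\kappa^{(n)}(\theta)}$ by a renewal equation, and lets $n\to\infty$ by monotone convergence. Your martingale argument is also the same.

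Where your plan diverges, and becomes unnecessarily hard, is the absorption part. You propose a moment/stopping-line chaining argument in the style of \cite{Ber03}, and you correctly flag the uniform control over infinitely many blocks as the difficulty --- but your suggested comparison ``$\sup$ of the mark over a block's lifetime is controlled by its initial mark'' fails here: unlike classical self-similar fragmentations, marks in an ESSF are exponentials of general L\'evy processes and can increase, so this monotonicity is simply unavailable. (There is also a sign slip: passing from the $0$-ESSF to the $\alpha$-ESSF uses $\tau^{-\alpha}$, so the relevant integrals are $\int V_i(s)^{-\alpha}\,\d s$, not $\int V_i(s)^{\alpha}\,\d s$.)

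The paper's argument is much shorter and sidesteps the difficulty entirely. Since $\e^{-t\kappa(\theta)}S_\theta(X(t))$ is a nonnegative martingale, it converges a.s.\ and is therefore a.s.\ bounded by a random $C<\infty$. Then \emph{pathwise}, for every $i$ and every $t$,
\[
V_i(t)^\theta \leq S_\theta(X(t)) \leq C\,\e^{t\kappa(\theta)},
\]
so raising to the power $-\alpha/\theta>0$ gives $V_i(t)^{-\alpha}\le (C\e^{t\kappa(\theta)})^{-\alpha/\theta}$, and integrating over $t$ yields a single finite bound $\sup_i\int_0^\infty V_i(t)^{-\alpha}\,\d t<\infty$ --- exactly the uniformity you were worried about, obtained for free. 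For finite total length, the additional hypothesis $-\alpha/\theta\ge 1$ enters through the elementary inequality $\|u\|_p\le\|u\|_1$ for $p\ge1$, giving $S_{-\alpha}(X(t))\le (S_\theta(X(t)))^{-\alpha/\theta}\le(C\e^{t\kappa(\theta)})^{-\alpha/\theta}$; one then checks the change-of-variables identity $\int_0^\infty \# X\circ\tau^{-\alpha}(t)\,\d t = \int_0^\infty S_{-\alpha}(X(t))\,\d t$ and concludes.
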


\begin{proof}
  See Appendix \ref{sec:proof-compactness}.
\end{proof}

\begin{remark}
  For a classical self-similar fragmentation with erosion coefficient $c\geq 0$ and dislocation measure $\nu$, we have
  \[
  \kappa(\theta) = -c\theta + \int_{\masspart}\Big(\sum_{i\geq 1} s_i^{\theta} - 1 \Big)\, \nu(\d \mathbf{s}).
  \]
  Since $\sum_i s_i \leq 1$ $\nu$-a.e., for all $\theta > 1$ we have $\kappa(\theta) < 0$, so we recover absorption in finite time for any $\alpha < 0$ and finite total length for any $\alpha < -1$.
\end{remark}

\begin{remark}
  Let us also mention that one can model branching Brownian motion in our setting.
  Indeed, consider a homogeneous ESSF where the logarithm of marks follow drifted Brownian motion and blocks dislocate at rate one into two blocks (say both with asymptotic frequency equal to half of the mother block) carrying the same mark.
  More precisely, take a {$0$-ESSF} with characteristics $c=0$, $d\in\R$, $\beta=1$ and with $\Lambda(\d \mathbf{z})$ the Dirac measure on $((\frac{1}{2},1),(\frac{1}{2},1),0,\ldots)$.
  
  Then the point process recording the positions of the logarithm of marks
  \[
  \sum_{k\in\N}\delta_{\log \tilde{V}_k(t)},\qquad t\geq 0
  \]
  is a classical binary branching Brownian motion with drift $d$.
  One gets a cumulant function
  \[
  \kappa(\theta) = d\theta +\frac{\theta^2}{2} + 1.
  \]
  This polynomial in $\theta$ takes negative values if and only if $d^2 - 2 > 0$, and we essentially recover the well-known fact that if $d>\sqrt{2}$, the lowest particle of a branching Brownian motion with drift $d$ goes to~$+\infty$.
\end{remark}

%%%%%%%%%%%%%%%%%%%%%%%%%%%%%%%%%%%%%%%%%%%%%%%%%%%%%%%%%%%%
%%%%%%%%%%%%%%%%%%%%%%%%%%%%%%%%%%%%%%%%%%%%%%%%%%%%%%%%%%%%

\subsection{Self-similar growth-fragmentation processes as ESSF.} \label{sec:link-with-gf}
Growth-fragmentation processes as introduced by Bertoin \cite{Ber17} can be thought of as modeling \emph{cell systems}, i.e.\ families of cells which grow (their size vary with time in a Markovian way) and reproduce independently of one another.
To be more specific, the size of a single cell is a Markov process $Y=(Y(t),t\geq 0)$ with values in $[0,\infty)$ with càdlàg paths and no positive jumps, called the \emph{cell process}.
At each negative jump of the process (a time $t$ such that $\abs{\Delta Y(t)} = Y(t-)-Y(t) > 0$), a new cell is born in the system with size $\abs{\Delta Y(t)}$.
The growth-fragmentation process $\mathbf{X} = (\mathbf{X}(t), t \geq 0)$ is the point-measure valued process recording the size of all cells alive at time $t$.

It is said to be self-similar if it can be built from a self-similar cell process $Y$, say with index $\alpha$ and characteristics $(d, \beta, \lambda, k)$, with $k,\beta \geq 0$, $d\in \R$ and $\lambda$ a Lévy measure on $(-\infty, 0)$.
This means that $Y$ is the Lamperti transform of a Lévy process $\xi$ satisfying $\E \e^{q\xi_1}=\exp(\phi(q))$ for all $q > 0$, with
\[
\phi(q) = -k + \frac{\beta}{2} q^2 + dq + \int_{(-\infty, 0)}\!\!\big(\e^{yq}-1-qy\1_{y>-1}\big)\,\lambda(\d q).
\]
Note that different cell processes may yield the same growth-fragmentation (see Shi \cite{Shi17}), and in fact the distribution of a growth-fragmentation is characterized uniquely by the index $\alpha$ and the cumulant function
\[
\kappa(q) := \phi(q)+\int_{(-\infty, 0)}\!\!(1-\e^{y})^q\,\lambda(\d q),
\]
which takes finite values for $q\geq 2$.
Furthermore, this map $\kappa$ has the following interpretation: if $\alpha=0$, then $\E \sum_{x\in\mathbf{X}(t)}x^q = \e^{t\kappa(q)}$.
A consequence of \cite{Shi17} is that $\kappa$ is unchanged (therefore the growth-fragmentation is the same) when replacing $\lambda$ by $\lambda_{|[-\log 2, 0)} + \bar{\lambda}$ (and modifying accordingly the drift term $d$), where $\bar{\lambda}$ is the measure on $[-\log 2, 0)$ defined as the push-forward of $\lambda_{|(-\infty, -\log 2)}$ by $y \mapsto \log(1-\e^{y})$.
Therefore we assume that $\lambda$ is a measure on $[-\log 2, 0)$, which intuitively means that the cell process $Y$ always keeps track of the biggest cell in a division, i.e.\ that $Y(t)\geq Y(t-)/2 \geq \abs{\Delta Y(t)}$ almost surely for all $t\geq 0$.

Let us now build a version of this growth-fragmentation process $\mathbf{X}$ as the process of marks in an ESSF.
Unfortunately there seems to be no natural candidate for this, since many ESSF may have the same process of marks.
Consider a map $Z:[-\log 2, 0)\to\markmasspart$ and write $Z(y)=(s_i(y), v_i(y), i\geq 1)$.
We assume that $v_1(y) = \e^{y}$, $v_2(y) = 1-\e^{y}$, $s_2(y) = 1-s_1(y)$, and $s_1$ is such that
\[
\int_{[-\log 2, 0)} 1 - s_1(y)\,\lambda(\d y)<\infty.
\]
For instance $s_1(y) = \e^{-y^2}$ or $s_1(y)=\e^{y}(1-y)$ both satisfy this condition, but of course there is no natural choice here.
Now define the push-forward $\Lambda:=\lambda\circ Z^{-1} + k\delta_{(\mathbf{1},0)}$, which satisfies by construction
\[
\int_{\markmasspart} \big(1-s_1\1_{v_1 > 0} +(\log v_1)^2 \wedge 1\big)  \,\Lambda(\d \mathbf{z}) < \infty,
\]
and consider the ESSF $X=(\Pi,\mathbf{V})$ with characteristics $(d, \beta, \Lambda)$ and $c=0$.
Then the process of marks $\tilde{\mathbf{V}} = (\sum_k\delta_{\tilde{V}_k(t)},\,t\geq 0)$ has the same distribution as the growth-fragmentation $\mathbf{X}$.
Indeed, up to a time-change, we can assume that $\alpha=0$, and Proposition \ref{prop:compactness} implies that for all $t\geq 0$, $\E \sum_k \tilde{V}_k(t)^q = \e^{t\kappa(q)}$, with
\begin{align*}
\kappa(q) &= dq +\frac{\beta}{2}q^2 + \int_{\markmasspart}\big(\sum_{i\geq 1} v_i^{q}\big) -1 -q \log v_1 \1_{\abs{\log v_1} \leq 1} \,\Lambda(\d \mathbf{z})\\
&= -k + dq +\frac{\beta}{2}q^2 + \int_{[-\log 2, 0)}\e^{yq} + (1-\e^{y})^{q} -1 -qy \,\lambda(\d y)\\
&= \phi(q) +  \int_{[-\log 2, 0)}(1-\e^{y})^{q}\,\lambda(\d y).
\end{align*}
This shows indeed that $\tilde{\mathbf{V}}\ed \mathbf{X}$.

\appendix
\section{Proofs} \label{sec:app}

\subsection{Proof of Proposition \ref{prop:paintbox}} \label{sec:proof-paintbox}

Let us write as usual $X=(\Pi,\mathbf{V})$.
First, note that $\Pi$ is an exchangeable partition with values in $\markpartstar$, therefore it has asymptotic frequencies -- so $\abs{X}^{\downarrow}$ exists almost surely -- and the finite blocks of $\Pi$ (if any) are necessarily singletons.
For the uniqueness part of the proposition, notice that any $\nu$ satisfying \eqref{eqprop:paintbox} must be equal to $\P(\abs{X}^{\downarrow}\in\cdot)$.

For the existence, let $(U_k,k\geq 1)$ be an i.i.d.\ sequence of uniform random variables on $[0,1]$, independent of $X$.
For every $i\in \N$, let $Z_i=(U_{k(\Pi,i)},V_i)\in[0,1]\times S^1$, where $k(\Pi,i)$ is the label of the block of $\Pi$ containing $i$
-- recall that labels are defined as an enumeration of blocks in increasing order of their smallest element: $k(\pi,1)=1$ for any partition $\pi$, then $k(\pi,i)=2$ for the first $i$ that is not in the same block as $1$, etc.

The key point of the proof is that the sequence $(Z_i,i\geq 1)$ is exchangeable, with values in a Polish space, which allows us to apply de Finetti's theorem.
To see that, consider a permutation $\sigma:\N\to\N$, and note that for each (deterministic) partition $\pi$, there exists a permutation $\phi:\N\to\N$ such that
\[
k(\pi,\sigma(i)) = \phi(k(\pi^{\sigma}, i)).
\]
This permutation of labels may not be unique (in the case where $\pi$ has a finite number of blocks), but we can assume that such a map $\phi$ is chosen deterministically as a function of $\pi$ and $\sigma$.
Going back to our random sequence $(Z_i,i\geq 1)$, we see that permuting its elements according to $\sigma$ yields for all $i\geq 1$,
\begin{align*}
Z_{\sigma(i)} &= (U_{k(\Pi,\sigma(i))}, V_{\sigma(i)})\\
&= (U_{\phi(k(\Pi^{\sigma},i))}, V_{\sigma(i)}).
\end{align*}
where $\phi$ is a random permutation whose randomness comes only from $\Pi$.
Now since the sequence $(U_k,k\geq 1)$ is i.i.d.\ and is independent from $\Pi$, if we define $U'_k=U_{\phi(k)}$ for each $k\geq 1$, then the sequence $(U'_k,k\geq 1)$ is still i.i.d.\ with the same distribution and independent from $\Pi$.
Finally, using the exchangeability property of $X=(\Pi,\mathbf{V})$, we have
\[
(Z_{\sigma(i)},i\geq 1) = \big((U'_{k(\Pi^{\sigma},i)},V_{\sigma(i)}),\;i\geq 1\big) \ed \big((U'_{k(\Pi,i)},V_{i}),\;i\geq 1\big)\ed \big((U_{k(\Pi,i)},V_{i}),\;i\geq 1\big),
\]
which shows that the sequence $(Z_i,i\geq 1)$ is exchangeable.

Therefore we can use Finetti's theorem: there exists a random probability measure $\theta \in \mathcal{M}_1([0,1]\times S^1)$ such that conditional on $\theta$, the sequence $(Z_i,i\geq 1)$ is i.i.d.\ with distribution $\theta$.
Let \[(a_k,k\geq 1)=(u_k,v_k,\,k\geq 1)\] denote the collection of atoms of $\theta$.
Note that $i\sim^{X}j$ iff $Z_i=Z_j$, and therefore the law of large numbers ensures us that the blocks of $X$ correspond to those atoms, i.e.\ for each $k\geq 1$, there is a block $B$ of $X$ with an asymptotic frequency $\abs{B}=\theta(a_k)$ and a mark equal to $v_k$.
Conversely any block which is not reduced to a singleton must be formed in this way.
Furthermore, note that singleton blocks have mark $0$ because of the assumption that $X\in\markpartstar$, so the knowledge of the atoms $(a_k,k\geq 1)$ and their mass is sufficient to reconstruct the sequence $(Z_i,i\geq 1)$, and therefore the marked partition $X$.

Now define for all $k\in\N$, $z_k := (\theta(a_k),v_k)$.
Up to a reordering, we can assume that $\mathbf{z}=(z_k,k\geq 1)$ is in $\markmasspart$ (if the sequence of atoms is finite, we concatenate to $\mathbf{z}$ infinitely many $(0,0)$ terms).
The previous discussion means that conditional on $\theta$, the asymptotic frequencies of $X$ are exactly
\[
\abs{X}^{\downarrow} = \mathbf{z} \in\markmasspart,
\]
and conditional on $\mathbf{z}$, the marked partition $X$ is drawn according to $\rho_{\mathbf{z}}$.
Note that the map $\theta\mapsto\mathbf{z}$ is measurable.
Indeed, by standard point processes arguments \cite[see e.g.][Lemma 9.1.XIII]{DV08}, there exists a measurable enumeration $(a_k,k\geq 1)$ of the atoms of $\theta$, and it is elementary that the nonincreasing reordering of this sequence is measurable.
Therefore, defining $\nu = \P(\abs{X}^{\downarrow}\in\cdot)$, which is the push-forward of the distribution of $\theta$ by the map $\theta\mapsto\mathbf{z}$, we see that it satisfies \eqref{eqprop:paintbox}.

\subsection{Proof of Proposition \ref{prop:stopline}} \label{sec:proof-stopline}

In this section, it will be helpful to consider the restriction of an ESSF process $X$ to a more general (and possibly random) subset $A\subset \N$, considered as a random variable living on the compact space $2^{\N}$.
We first consider a fixed -- non random -- $A\subset \N$ with cardinality $\#A\in\N\cup\{\infty\}$, and define a canonical enumeration of $A$ by
\[
\sigma_{A}:\begin{cases}
[\#A] &\to \N\\
i &\mapsto \min\{ n\in\N, \, \# (A\cap [n])=i \},
\end{cases}
\]
such that $A = \{\sigma_{A}(1),\sigma_{A}(2), \ldots\}$, with $\sigma_A(1) < \sigma_{A}(2) < \ldots$.
Now recall the definition of the action of injections on $\markpart{\infty}$.
For any $x\in\markpart{\infty}$, one can see $x^{\sigma_A}\in\markpart{\#A}$ as the restriction of $x$ to the set $A$.

As an inverse operation, for any $x'=(\pi',\mathbf{v}')\in\markpart{\#A},x''=(\pi'',\mathbf{v}'')\in\markpart{\#A^{\mathrm{c}}}$, where $A^{\mathrm{c}}:=\N\setminus A$, we can define
\[
x'\overset{A}{\oplus}x''\in\markpart{\infty}
\]
as the pair $(\pi,\mathbf{v})$ such that
\begin{gather*}
\forall i,j \in \N, \qquad i\sim^{\pi}j \iff \begin{cases}
&i,j\in A\text{ and }\sigma_{A}^{-1}(i) \sim ^{\pi'}\sigma_{A}^{-1}(j)\\
\text{or} & i,j\in A^{\mathrm{c}}\text{ and }\sigma_{A^{\mathrm{c}}}^{-1}(i) \sim ^{\pi''}\sigma_{A^{\mathrm{c}}}^{-1}(j)
\end{cases} \\
\text{and }{v}_{i} = \begin{cases}
{v}'_{\sigma_{A}^{-1}(i)} & \text{ if } i\in A\\
{v}''_{\sigma_{A^{\mathrm{c}}}^{-1}(i)} & \text{ if } i\in A^{\mathrm{c}}.
\end{cases}
\end{gather*}
Similarly, for processes $X'=(X'(t),\, t\geq 0)$ and $X''$, we write for conciseness
\[
X' \overset{A}{\oplus} X'' := \big(X'(t) \overset{A}{\oplus} X''(t), \, t\geq 0\big )
\]
For $x=(\pi,\mathbf{v})\in \markpart{\infty}$ and $A\subset \N$, we will say that $A$ is $x$-compatible if it is a union of a family of blocks of $\pi$ -- i.e.\ if $A$ is such that $i\in A,j\notin A \implies i\nsim^{\pi} j$.
These definitions enable us to reformulate the branching property as follows.
\begin{lemma} \label{lem:block_split}
  Let $X$ be an ESSF process, $x=(\pi,\mathbf{v})\in\markpart{\infty}$, and $A\subset \N$ an $x$-compatible set.
  Defining $X':=X^{\sigma_A}$ and $X'':=X^{\sigma_{A^{\mathrm{c}}}}$, then under $\P_x$, $X'$ and $X''$ are two independent copies of the process $X$, respectively started at $x^{\sigma_A}$ and $x^{\sigma_{A^{\mathrm{c}}}}$, and
  \[
  X = X' \overset{A}{\oplus} X''.
  \]
\end{lemma}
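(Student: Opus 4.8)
The plan is to read everything off the self-similar branching property of Definition~\ref{def:ESSF}(ii) after a small bookkeeping argument on block labels. First I would dispose of the pathwise identity. Since $A$ is $x$-compatible and, by the definition of $\ssfrag_\alpha$ (equivalently of $\frag$), the partition $\Pi(t)$ is finer than $\pi$ for all $t\geq 0$, the set $A$ is $X(t)$-compatible for every $t$: no block of $\Pi(t)$ can meet both $A$ and $A^{\mathrm c}$. Hence $X^{\sigma_A}$, $X^{\sigma_{A^{\mathrm c}}}$ and $X^{\sigma_A}\overset{A}{\oplus}X^{\sigma_{A^{\mathrm c}}}$ are all well-defined as $\markpart{\infty}$-valued processes, and unwinding the definitions of $\overset{A}{\oplus}$ and of the canonical enumerations $\sigma_A$, $\sigma_{A^{\mathrm c}}$ shows that $X=X^{\sigma_A}\overset{A}{\oplus}X^{\sigma_{A^{\mathrm c}}}$ holds identically on $\Omega$.

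For the joint law, I would use the representation $X$ under $\P_x \ed \ssfrag_\alpha(x,X^{(\cdot)})$ with $X^{(\cdot)}$ an i.i.d.\ sequence of copies started from $(\mathbf 1,1)$, and split the driving sequence along the blocks of $\pi$. Writing $\pi=\{\pi_1,\pi_2,\ldots\}$ for the ordered blocks, set $K_A:=\{k:\pi_k\subseteq A\}$ and $K_{A^{\mathrm c}}:=\{k:\pi_k\subseteq A^{\mathrm c}\}$; $x$-compatibility of $A$ means these two sets partition the labels of the non-empty blocks. The crucial point is that in $\ssfrag_\alpha(x,X^{(\cdot)})$ each coordinate $i$ is driven only by $X^{(k_i)}$, with the time change $v_i^\alpha$ governed by the mark of its own block, and both $k_i$ (up to relabeling) and $v_i$ are unchanged when one restricts to $A$. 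Therefore $\ssfrag_\alpha(x,X^{(\cdot)})^{\sigma_A}$ equals $\ssfrag_\alpha\!\big(x^{\sigma_A},\widehat{X}\big)$, where $\widehat{X}$ is the subfamily $(X^{(k)})_{k\in K_A}$ relabeled in increasing order of $k$, and symmetrically for $A^{\mathrm c}$. Since $(X^{(k)})_{k\in K_A}$ and $(X^{(k)})_{k\in K_{A^{\mathrm c}}}$ are two disjoint sub-collections of an i.i.d.\ sequence, they are independent, and each is again an i.i.d.\ sequence of copies started from $(\mathbf 1,1)$; so by Definition~\ref{def:ESSF}(ii) (for finite $A$, in the version on $\markpart{\#A}$ granted by Lemma~\ref{lem:projmarkov} and the discussion following it) the first equals in law $X$ under $\P_{x^{\sigma_A}}$ and the second $X$ under $\P_{x^{\sigma_{A^{\mathrm c}}}}$, and they are jointly independent. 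Combined with the pathwise identity of the first paragraph, this is exactly the assertion.

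I expect the only genuinely fiddly step to be the label matching used above: if $\pi_k\subseteq A$ is the $m$-th smallest element of $K_A$, one must check that $\pi_k^{\sigma_A}$ is the $m$-th block of $\pi^{\sigma_A}$, so that ``coordinate $i\in A$ driven by $X^{(k)}$'' in $\ssfrag_\alpha(x,X^{(\cdot)})$ corresponds to ``coordinate $\sigma_A^{-1}(i)$ driven by $\widehat{X}^{(m)}$'' in $\ssfrag_\alpha(x^{\sigma_A},\widehat{X})$. This is purely order-theoretic — $\sigma_A$ is increasing, hence preserves the ``least element'' order that defines block labels — and once it is in place the two formulas defining $\ssfrag_\alpha$ match mechanically, with no analytic content beyond the definitions already set up.
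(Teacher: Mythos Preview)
Your proposal is correct and is precisely the unpacking of what the paper means by ``an immediate consequence of the branching property \textit{(ii)} of Definition~\ref{def:ESSF} and of the definition of the $\ssfrag$ operator'': the paper's own proof is that single sentence, and your two steps (the pathwise identity via $A$-compatibility being preserved, then splitting the i.i.d.\ driving sequence along $K_A$ and $K_{A^{\mathrm c}}$) are exactly how one verifies it. The label-matching point you flag is the only place where care is needed, and your observation that $\sigma_A$ is increasing and hence preserves the least-element ordering of blocks is the right reason it works.
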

\begin{proof}
  This is an immediate consequence of the branching property \textit{(ii)} of Definition \ref{def:ESSF} and of the definition of the $\ssfrag$ operator.
\end{proof}

Let us now tackle the proof of the Markov property for stopping lines \eqref{eqprop:markov_lines}.
We write as usual $X=(\Pi,\mathbf{V})$.
We first assume that there exist $0\leq t_1 < t_2 < \ldots < t_k \leq \infty$ such that for all $i\in\N$, $L_i$ takes values in the finite set $\{t_1, \ldots t_k\}$.
We prove the Markov property for such stopping lines by induction on $k$.
For $k=1$ and $t_1<\infty$, this amounts to the simple Markov property, so \eqref{eqprop:markov_lines} holds by definition.
If $t_1=\infty$, then for all $t\geq 0$, for all $i\geq 1$, $L_i+t\equiv \infty$.
By convention $\mathbf{V}(\infty)$ is the null vector, and by definition $\ssfrag\!\big(X(L),X^{(\cdot)}\big)$ is the process which is a.s.\ constant equal to $X(\infty)=(\Pi(\infty),\mathbf{V}(\infty))$, so \eqref{eqprop:markov_lines} holds again.

Now assume that $k>1$, and that the stopping line Markov property has been proven for all stopping lines taking at most $k-1$ distinct values.
By Remark \ref{rk:stopping_lines}, $L\wedge t_{k-1}$ is a stopping line taking at most $k-1$ distinct values.
Therefore, one can apply the induction hypothesis, which says that conditional on $\mathcal{G}_{L\wedge t_{k-1}}$, the process $X(L\wedge t_{k-1}+\cdot)$ 
has the distribution of a copy of $X$ started from $X(L\wedge t_{k-1})$.
Now we define the random set
\[
A := \{i\in \N, \, L_i = t_{k}\},
\]
which is $\mathcal{G}_{L\wedge t_{k-1}}$-measurable.
Indeed let us show that $\{i\in A\}\in \mathcal{G}_{L\wedge t_{k-1}}$.
Since $\{L_i=t_{k}\}=\{L_i>t_{k-1}\}\in \mathcal{G}_i(t_{k-1})$, one can write the indicator of this event as $\1_{\{L_i>t_{k-1}\}}=F(B_i(s),V_i(s),\,s\in[0,t_{k-1}])$ for a measurable functional $F$ -- recall that $\mathcal{G}_i(t_{k-1})=\sigma(B_i(s),V_i(s),\,s\in[0,t_{k-1}])$, where $B_i(s)$ is the block of $\Pi(s)$ containing $i$.
Now on the event $\{L_i \geq t_{k-1}\}$, we have
\[
F(B_i(s),V_i(s),\,s\in[0,t_{k-1}])=F\big(B_i(s\wedge L_i \wedge t_{k-1}),V_i(s\wedge L_i \wedge t_{k-1}),\,s\in[0,t_{k-1}]\big),
\]
therefore we can write
\[
\{i\in A\} = \{L_i \geq t_{k-1}\}\cap\big\{ F\big(B_i(s\wedge L_i \wedge t_{k-1}),V_i(s\wedge L_i \wedge t_{k-1}),\,s\in[0,t_{k-1}]\big) = 1\big\} \in \mathcal{G}_{L\wedge t_{k-1}},
\]
so finally $A$ is $\mathcal{G}_{L\wedge t_{k-1}}$-measurable.
Now notice that because $L$ is a stopping line, $A$ is compatible with $\Pi(L\wedge t_{k-1})$ in the sense that $A$ is necessarily a union of blocks of $\Pi(L\wedge t_{k-1})$.
Therefore, it is immediate by definition that
\[
X(L\wedge t_{k-1}+\cdot) = X' \overset{A}{\oplus}X'',
\]
with
\begin{align*}
&X' = X(L\wedge t_{k-1}+\cdot)^{\sigma_{A}} \\
\text{ and } &X'' = X(L\wedge t_{k-1}+\cdot)^{\sigma_{A^{\mathrm{c}}}}.
\end{align*}
Now by Lemma \ref{lem:block_split}, 
conditional on $\mathcal{G}_{L\wedge t_{k-1}}$, $X'$ and $X''$ are two independent copies of $X$ started respectively from $X(L\wedge t_{k-1})^{\sigma_{A}}$ and $X(L\wedge t_{k-1})^{\sigma_{A^{\mathrm{c}}}}$.

Also, notice that by definition of the random set $A$, we have the equality
\begin{equation} \label{tmp:eq_stopping_line_markov}
X(L+\cdot) = X'(t_{k}-t_{k-1}+\cdot) \overset{A}{\oplus} X'',
\end{equation}
and for the same reason, the following equality between $\sigma$-algebras holds:
\[
\mathcal{G}_{L} = \mathcal{G}_{L\wedge t_{k-1}} \vee \sigma\big(X'(s),\, s\in [0,t_k-t_{k-1}] \big).
\]
Clearly $X'$ and $X''$ are still independent conditional on $\mathcal{G}_{L}$, and the distribution of $X'(t_{k}-t_{k-1}+\cdot)$ conditional on $\mathcal{G}_{L}$ is by the Markov property at time $t_k-t_{k-1}$ the law of $X$ started from $X'(t_{k}-t_{k-1})=X(L)^{\sigma_{A}}$.
Finally, using again Lemma \ref{lem:block_split}, conditional on $\mathcal{G}_{L}$, the process
\[
X'(t_{k}-t_{k-1}+\cdot) \overset{A}{\oplus} X''
\]
has simply the distribution of a copy of $X$ started at $X(L)$.
So by \eqref{tmp:eq_stopping_line_markov} the Markov property for stopping lines holds for $L$, and so by induction it holds for all stopping lines taking at most a finite number of values.

Now fix a general stopping line $L$, a time $t\geq 0$, and let us assume that our probability space contains an independent sequence $X^{(\cdot)}$ of i.i.d.\ copies of the process started from $(\mathbf{1},1)$.
To conclude, it is enough to prove that conditional on $\mathcal{G}_L$,
\begin{equation} \label{eqproof:one-dim-SLM}
X(L+t) \ed \ssfrag_\alpha\!\big(X(L),X^{(\cdot)}\big)(t),
\end{equation}
because then for any $0 \leq t_1 < t_2 < \ldots t_k$ one can apply successively \eqref{eqproof:one-dim-SLM} to the stopping lines $L+t_i, 1\leq i \leq k$, which implies that \eqref{eqprop:markov_lines} holds for any finite dimensional distributions.
Therefore it remains only to prove
\[
\E\left[F(X(L+t)) Z\right] = \E\left[F\left (\ssfrag_\alpha\!\big(X(L),X^{(\cdot)}\big)(t)\right ) Z\right]
\]
for any fixed continuous bounded map $F:\markpartstar\to\R$ and $Z$ a $\mathcal{G}_{L}$-measurable bounded random variable.
To show this, consider the sequence of stopping lines $(L^{n},n\geq 1)$ defined by
\[
L^{n}_i = 2^{-n}\lceil2^{n}L_i\rceil \1_{L_i \leq n} + \infty \1_{L_i > n}, \quad i\geq 1,
\]
where $\lceil\cdot\rceil$ denotes the usual ceiling function.
This is a classical transformation for stopping times, and it is easily checked that $L^n$ is a stopping line for all $n\geq 1$.
Furthermore, for all $i\geq 1$, $L^n_i$ is nonincreasing and tends to $L_i$ as $n\to\infty$, so since $X$ has right-continuous sample paths, for any $t'\geq 0$, we have $V_i(L^n_i+t') \to V_i(L_i+t')$ and $B_i(L^n_i+t') \to B_i(L_i+t')$ a.s.\ -- recall that $B_i(u)$ denotes the block of $\Pi(u)$ containing $i$.
In other words, for any $t'\geq 0$, the convergence $X(L^n+t')\to X(L+t')$ holds almost surely.
Therefore
\[
\E\left[F(X(L^{n}+t)) Z\right] \underset{n\to\infty}{\longrightarrow} \E\left[F(X(L+t)) Z\right],
\]
Now because $L^{n}$ only takes values in a finite set for all $n$, we can apply \eqref{eqprop:markov_lines}, so
\begin{equation}\label{eqproof:tmp}
\E\left[F(X(L^{n}+t)) Z\right] = \E\left[F\left (\ssfrag_\alpha\!\big(X(L^{n}),X^{(\cdot)}\big)(t)\right ) Z\right].
\end{equation}
This holds because $Z$ is $\mathcal{G}_{L^{n}}$-measurable since $\mathcal{G}_L \subset\mathcal{G}_{L^{n}}$ for all $n\geq 1$.
For the convergence of the right-hand side, recall that $X(L^{n}+t)\to X(L+t)$ and note also that if $V_i(L_i)=0$ for $i\geq 1$, then since $L_i$ is a stopping time, by the strong Markov property $V_i(L_i+t)$ is also zero for all $t\geq 0$, and in particular $V_i(L^{n}_i)=0$ for all $n\geq 1$.
Now by definition an ESSF process is stochastically continuous, so in particular for any $i\geq 1$, on the event $\{V_i(L_i) >0\}$, we have 
\[
X^{(j)}(V_i(L_i)^{\alpha}t) \underset{n\to\infty}{\longrightarrow} X^{(j)}(V_i(L_i)^{\alpha}t) \qquad \forall j\geq 1 \;a.s.
\]
We can now invoke the continuity property of the operator $\ssfrag_\alpha$ pointed out in Remark \ref{rk:fragproc_continuity} and deduce
\[
\ssfrag_\alpha\!\big(X(L^{n}),X^{(\cdot)}\big)(t) \underset{n\to\infty}{\longrightarrow}\ssfrag_\alpha\!\big(X(L),X^{(\cdot)}\big)(t) \qquad a.s.
\]
Taking limits in \eqref{eqproof:tmp} yields the equality needed to end the proof.

\subsection{Proof of Proposition \ref{prop:changing-index}} \label{sec:proof-changing-index}

Let $X=(\Pi,\mathbf{V})$ be an $\alpha$-ESSF process, $t\geq 0$, and recall the definition of $\tau^{\beta}(t)$ as in the proposition, i.e.\
\[
\tau^{\beta}_i(t) \;=\; \left (\int_{0}^{\cdot}V_i(s)^{\beta} \,\d s\right )^{-1}\!\!\!(t) \;=\;
\inf \bigg\{u\geq 0,\,\int_{0}^{u}V_i(s)^{\beta} \,\d s=t\bigg\},
\qquad i\geq 1,
\]
with the convention $\inf \emptyset = \infty$.
For conciseness and because $\beta$ is fixed, let us write simply $\tau$ instead of $\tau^{\beta}$ throughout the proof.
First, let us see that $\tau(t)$ is a stopping line.
Fix $i\in\N$, and note that for $T\geq 0$,
\[
\{\tau_i(t) \leq T\} = \Big\{ \int_{0}^{T}V_{i}(s)^{\beta}\,\d s \geq t \,\Big\} \in \mathcal{G}_i(T),
\]
so $\tau_i(t)$ is a $\mathcal{G}_i$-stopping time.
Furthermore, conditional on $\tau_i(t)$, for any $i,j\in\N$, if $i\sim j$ in $\Pi(\tau_i(t))$, then almost surely for all $0\leq s\leq \tau_i(t)$, we have $i\sim j$ in $\Pi(s)$ so $V_i(s)=V_j(s)$, and necessarily $\tau_j(t) = \tau_i(t)$.

Therefore $\tau(t)$ is indeed a stopping line, so the process $X\circ\tau = (X(\tau(t),t\geq 0)$ is well defined.
We claim that its sample paths are càdlàg in $\markpart{\infty}$.
Indeed, by definition, for each $i\in\N$, $\tau_i$ is a non-decreasing right-continuous map.
Now because $X$ also has càdlàg sample paths by definition, the following holds almost surely: for each $i\in\N$, and $t\in[0,\infty)$,
\[
X(\tau_i(s)) \tol_{s\downarrow t} X(\tau_i(t)) \quad \text{ and }\quad X(\tau_i(s)) \tol_{s\uparrow t} X(\tau_i(t)-) \text{ if } t>0.
\]
Now note that for each stopping line $L$ and integer $n\in\N$, by definition $X(L)\restr{n}$ is a (deterministic) continuous functional of the variables $(X(L_1), \ldots, X(L_n))$.
Applying this to $L=\tau(s)$ and letting $s\to t$, it follows that almost surely
\[
X(\tau(s))\restr{n} \tol_{s\downarrow t} X(\tau(t))\restr{n} \quad \text{ and }\quad X(\tau(s))\restr{n} \text{ converges in }\markpart{n} \text{ when }s\uparrow t, \text{ in the case } t>0.
\]
The integer $n$ being generic, this shows that $X\circ\tau$ is an almost surely càdlàg process.

Let $t\geq 0$ be fixed.
Since $\tau(t)$ is a stopping line, we can apply Proposition \ref{prop:stopline}, and assume that the process $X(\tau(t)+\cdot)$ is given by
\[
\ssfrag_\alpha\!\big(X(\tau(t)), X^{(\cdot)}\big),
\]
where $X^{(\cdot)}$ is an independent sequence of i.i.d.\ copies of the process started from $(\mathbf{1},1)$.
For each $k\in\N$, let $(\tau^{(k)}(s),s\geq 0)$ denote the stopping lines corresponding to $X^{(k)}$, i.e.\ 
\[
\tau_i^{(k)}(s) = \left (\int_{0}^{\cdot}V_i^{(k)}(u)^{\beta} \,\d u\right )^{-1}(s), \qquad i\geq 1, \, s\geq 0.
\]
Our aim is to show that
\begin{equation} \label{eqproof:ssfr_to_mfr}
X(\tau(t+\cdot)) = \ssfrag_{\alpha-\beta}\!\big(X(\tau(t)),X^{(\cdot)}\circ\tau^{(\cdot)}\big).
\end{equation}
Now let us fix $i\in\N$, and work conditional on $\mathcal{G}_{\tau(t)}$.
On the event $\{V_i(\tau_i(t))=0\}$, then by definition of the operators $\ssfrag_\alpha$, the block containing $i$ is constant in time and has mark $0$ in both processes in \eqref{eqproof:ssfr_to_mfr}, so there is equality for index $i$.
Now we condition on $V_i(\tau_i(t))=v$ with $v>0$.
Note that $\{V_i(\tau_i(t))>0\}\subset\{\tau_i(t)<\infty\}$, so in that case we have $\tau_i(t)<\infty$ almost surely, so there is the equality
\[
\int_{0}^{\tau_i(t)}V_i(s)^{\beta} \,\d s = t.
\]
Therefore we can write, for $s\geq 0$,
\begin{align*}
\tau_i(t+s) &= \left (\int_{0}^{\cdot}V_i(u)^{\beta} \,\d u\right )^{-1}(t+s)\\
&= \tau_i(t)+ \left (\int_{0}^{\cdot}V_i(\tau_i(t)+u)^{\beta} \,\d u\right )^{-1}(s).
\end{align*}
Now for all $u\geq 0$, because $X(\tau(t)+\cdot)=\ssfrag_\alpha(X(\tau(t)), X^{(\cdot)})$, we have $V_i(\tau_i(t)+u)=vV^{(k)}_i(v^{\alpha}u)$, for $k$ such that $i$ is in the $k$-th block of $\Pi(\tau(t))$.
This implies
\begin{align*}
\tau_i(t+s)-\tau_i(t) &=\left (\int_{0}^{\cdot}V_i(\tau_i(t)+u)^{\beta} \,\d u\right )^{-1}(s) \\
&= \left (\int_{0}^{\cdot}v^{\beta}V^{(k)}_i(v^{\alpha}u)^{\beta} \,\d u\right )^{-1}(s) \\
&= \left (v^{\beta-\alpha}\!\!\int_{0}^{v^{\alpha}\cdot}V^{(k)}_i(u)^{\beta} \,\d u\right )^{-1}(s) \\
&= v^{-\alpha}\left (\int_{0}^{\cdot}V^{(k)}_i(u)^{\beta} \,\d u\right )^{-1}(v^{\alpha-\beta}s) \\
&= v^{-\alpha}\tau^{(k)}_i(v^{\alpha-\beta}s).
%&= V_i(\tau_i(t))^{-\alpha}\tau^{(k)}_i(V_i(\tau_i(t))^{\alpha-\beta}s).
\end{align*}
Note that $L_i(s) := \tau_i(t+s)-\tau_i(t)$ is a stopping time for the natural filtration of the process $V_i(\tau_i(t)+\cdot) = vV_i^{(k)}(v^\alpha \cdot)$, because for all $i\geq 1$ and $u\geq 0$, we have
\[
\{L_i(s) \leq y\} \;=\; \Big\{\int_0^y \big(vV_i^{(k)}(v^\alpha u)\big)^{\beta}\,\d u \geq s \Big\} \;\in\; \sigma\big(vV_i^{(k)}(v^\alpha u), \, u\in [0,y]\big).
\]
This implies that $L(s) = (L_i(s),i\geq 1)$ is a stopping line for the process $X(\tau(t)+\cdot)$, and the equality $X(\tau(t)+L(s))=X(\tau(t+s))$ holds.
We can then write
\[
X(\tau(t+s)) = \ssfrag_\alpha\!\big(X(\tau(t)), X^{(\cdot)}\big)(L(s)),
\]
and compute the mark attached to the block containing $i$ in this marked partition: by definition of the operator $\ssfrag_\alpha$ and using the fact that $L_i(s) = v^{-\alpha}\tau^{(k)}_i(v^{\alpha-\beta}s)$, we have
\begin{align*}
V_i(\tau_i(t+s)) &= v V^{(k)}_i(v^\alpha L_i(s))\\
&=  v V^{(k)}_i(\tau^{(k)}_i(v^{\alpha-\beta}s)).
\end{align*}
We now recognize the mark attached to the block containing $i$ in $\ssfrag_{\alpha-\beta}\!\big(X(\tau(t)), X^{(\cdot)}\circ\tau^{(\cdot)})\big)(s)$.
The same argument allows us to check that $B_i(\tau_i(t+s))$ is also the block of $\ssfrag_{\alpha-\beta}\!\big(X(\tau(t)), X^{(\cdot)}\circ\tau^{(\cdot)})\big)(s)$ containing $i$, so in the end we have exactly
\[
X\circ\tau(t+\cdot) = \ssfrag_{\alpha-\beta}\!\big(X\circ\tau(t), X^{(\cdot)}\circ\tau^{(\cdot)})\big),
\]
which shows that $X\circ \tau$ is an $(\alpha-\beta)$-ESSF.

Finally, note that if $X$ is non-degenerate, then for all $x\in\markpartstar$, $\P_x$-almost surely for any time $t\geq 0$, any block of $X(t)$ is either infinite or has mark $0$.
So $\P_x$-almost surely, for all possible stopping lines $L$, the blocks of $X(L)$ satisfy the same condition, i.e.\ $X(L)\in\markpartstar$.
Therefore, $\P_x$-almost surely $X\circ \tau$ has sample paths with values in $\markpartstar$.

\subsection{Proof of Theorem \ref{thm:main}} \label{sec:proof-main}

Note that the whole process $(X(t),t\geq 0)$ defines a coupling of all $\xi_n$ for $n\geq 1$.
By definition, one has
\begin{equation*}\xi_{n+1}(t) = \xi_n(t), \quad \forall t\leq T_{n+1},
\end{equation*}
and at time $T_{n+1}$, either $\xi_n$ is killed on the event $\{T_n = T_{n+1}\}$, or, conditional on $\{T_n > T_{n+1}\}$, the process $\xi_n$ jumps, independently of the past, according to the probability
\begin{equation*}
\eta_{n+1}(\cdot) := \mathcal{D}_{n+1}\big(\log v_1 \in \cdot \mid \pi\restr{n} = \mathbf{1}_n \text{ and } v_1\neq 0\big),
\end{equation*}
and goes on independently of the past, its remaining part $(\xi_n(T_{n+1}+t)-\xi_n(T_{n+1}),0\leq t\leq T_n - T_{n+1})$ being independent from $\xi_{n+1}$ and equal in distribution to $\xi_n$ by the strong Markov property.
Let us first compute the probability $p_n$ that $T_{n+1} = T_n$, which is by construction
\begin{equation} \label{eqpr:pn-dn}
p_n = \mathcal{D}_{n+1}(\pi\restr{n}\neq \mathbf{1}_n \text{ or }v_1 = 0).
\end{equation}
From the previous description, one can write
\[
T_n = T_{n+1} + ZT'_n,
\]
where $Z = \1_{\{T_{n}\neq T_{n+1}\}}\sim \Be(1-p_n)$ is a Bernoulli random variable with parameter $(1-p_n)$, and $T'_n$ is a random variable equal in distribution to $T_n$, and independent from $T_{n+1}$ and $Z$.
Then, $T_{n+1},Z$ and $T'_n$ are independent because $Z$ is simply a function of the marked partition $D_{n+1}=(\Pi(T_{n+1}),\mathbf{V}(T_{n+1})/V_1(T_{n+1}-))\restr{n+1}$ which is independent from $\xi_{n+1}$, and $T'_n$ is independent of $(\xi_{n+1}, D_{n+1})$ because of the strong Markov property and the fact that $\alpha=0$.
Taking expectations yields
\[
\frac{1}{J_n} = \frac{1}{J_{n+1}} + \frac{1-p_n}{J_n},
\]
which implies that $p_n = J_n / J_{n+1}$.

Now let us rebuild the coupling between $\xi_n$ and $\xi_{n+1}$ to show that their respective Lévy measures $\lambda_n$ and $\lambda_{n+1}$ satisfy
\begin{equation} \label{eqpr:relation-lambda-n}
\lambda_n = \lambda_{n+1} + (J_{n+1}-J_n)\widetilde{\eta}_{n+1},
\end{equation}
where
\begin{equation} \label{eqpr:eta-n-def}
\widetilde{\eta}_{n+1} := \eta_{n+1}\big(\cdot\cap\;\R\!\setminus\!\{0\}\big) = \mathcal{D}_{n+1}\big(\{\log v_1 \in \cdot\}\cap\{v_1\neq 1\} \mid \pi\restr{n} = \mathbf{1}_n \text{ and } v_1\neq 0\big).
\end{equation}
Consider the process $\xi_{n+1}$ a Lévy process with characteristic exponent $\psi_{n+1}$, and let $T_n\sim\Exp(J_n)$ be independent.
Now independently define $T'\sim\Exp(J_{n+1}-J_n)$, and let $T_{n+1} := T_n \wedge T'$.
We see that $T_{n+1}\sim\Exp(J_{n+1})$, that it is independent of $\xi_{n+1}$ and of the event $\{T_n = T_{n+1}\}$, which has probability $J_n/J_{n+1}=p_n$.
Now conditional on $(T_n,T_{n+1})$, let us define
\begin{equation} \label{eq:dnplus1-def}
D_{n+1} = \begin{cases}
D' &\text{if }T_n=T_{n+1}, \quad \text{where } D' \sim \mathcal{D}_{n+1}(\cdot \mid \pi\restr{n}\neq \mathbf{1}_n \text{ or }v_1 = 0)\\
D'' &\text{if }T_n\neq T_{n+1}, \quad \text{where } D'' \sim \mathcal{D}_{n+1}(\cdot \mid \pi\restr{n}= \mathbf{1}_n \text{ and }v_1 \neq 0),
\end{cases}
\end{equation}
where $D'$ and $D''$ are mutually independent and independent of everything else.
Note that $D_{n+1}$ is independent of $T_{n+1}$ and of $\xi_{n+1}$, and because of \eqref{eqpr:pn-dn}, $D_{n+1}$ has indeed distribution $\mathcal{D}_{n+1}$.
Let us define $J=\log v_1$, where $v_1$ is the mark associated with the integer $1$ in the marked partition $D_{n+1}$, and $\xi_n$ a Lévy process with characteristic exponent $\psi_n$.
Now putting everything together, define $\tilde{\xi}_{n+1}$ as the killed Lévy process $(\xi_{n+1}(t), 0\leq t < T_{n+1})$, and define $\tilde{\xi}_n$ as
\[
\tilde{\xi}_n(t) = \begin{cases}
\xi_{n+1}(t) &\text{ if }t<T_{n+1}\\
\xi_{n+1}(T_{n+1})+J + \xi_n(t-T_{n+1}) & \text{ if }T_{n+1}\leq t < T_n.
\end{cases}
\]
By construction, the joint distribution of $(\tilde{\xi}_{n}, \tilde{\xi}_{n+1})$ is equal to the one we get from the original process $X$, and it should now be clear that the point process of jumps of $\xi_n$ is equal in distribution to the point process of jumps of $\xi_{n+1}$ with additional jumps distributed as $J=\log v_1$, arising at rate $(J_{n+1}-J_n)$.
Note that by construction, $J$ has distribution $\eta_{n+1}$, so finally we have proven \eqref{eqpr:relation-lambda-n}.
The fact that $(\lambda_n,n\geq 1)$ is a nonincreasing sequence of $\sigma$-finite measures ensures the existence of a limiting measure $\lambda_\infty$ on $\R\setminus\{0\}$ such that for all $n\in\N$,
\begin{equation} \label{eq:lambda-infty-lambda-n}
\lambda_n = \lambda_\infty + \sum_{k>n}(J_{k}-J_{k-1})\widetilde{\eta}_k.
\end{equation}
Recall that we wrote the characteristic exponent of $\xi_n$ in the following way:
\[
\psi_n(\theta) = id_n\theta -\frac{\beta_n}{2}\theta^2 + \int_{\R}\Big(\e^{i\theta y} -1-i\theta y\1_{\abs{y}\leq 1} \Big) \,\lambda_n(\d y).
\]
From the previous discussion, one can construct a coupling between the two Lévy processes such that $(\xi_n(t)-\xi_{n+1}(t),t\geq 0)$ is simply a compound Poisson process with jump measure $(J_{n+1}-J_n)\widetilde{\eta}_{n+1}$ which is independent from $\xi_{n+1}$.
It is then clear that necessarily $\beta_{n}=\beta_{n+1}$, and
\[
d_n = d_{n+1} + (J_{n+1}-J_n)\int_{\abs{y}\leq 1}y\,\widetilde{\eta}_{n+1}(\d y) = d_{n+1} + \int_{\abs{y}\leq 1}y\,(\lambda_n-\lambda_{n+1})(\d y).
\]
To summarize, letting $\beta := \beta_1$, the following holds for all $n\in\N$
\begin{equation} \label{eq:betan_dn}
\beta_n=\beta \quad \text{ and } \quad d_n = d_1 - \int_{\abs{y}\leq 1}y\,(\lambda_1-\lambda_n)(\d y),
\end{equation}
where $(\lambda_1 - \lambda_n)$ denotes the positive measure given by
\[
(\lambda_1 - \lambda_n) = \sum_{k=2}^{n}(J_{k}-J_{k-1})\widetilde{\eta}_k.
\]
Let us now examine the consistency properties of the measures $\mathcal{D}_n$.
From this point on, for the sake of clarity, we decompose the proof in a series of steps.

\paragraph{Step 1.} We prove the existence and uniqueness of a measure $\mathcal{D}$ on $\markpart{\infty}$ satisfying
\begin{equation} \label{eq:d-support}
\mathcal{D}(\pi=\mathbf{1}\text{ and }v_1\neq 0) = 0
\end{equation}
and such that for all $n\in\N$,
\begin{equation} \label{eq:consistency-D-Dn}
\mathcal{D}\big(\{x\restr{n}\in\cdot\}\cap\{\pi\restr{n}\neq \mathbf{1}_n \text{ or }v_1 = 0\}\big) = J_n \mathcal{D}_n,
\end{equation}
then we show that this measure is exchangeable.

First, note that for the construction above to be consistent
-- with $D_{n+1}$ defined in \eqref{eq:dnplus1-def} as $D_{n+1} = D'$ on the event $\{T_n=T_{n+1}\}$, where $D'\sim \mathcal{D}_{n+1}(\cdot \mid \pi\restr{n}\neq \mathbf{1}_n \text{ or }v_1 = 0)$ --
the random variable $D'\restr{n}$ must have distribution $\mathcal{D}_n$.
Indeed, on the event $\{T_{n+1}<T_n\}$, the strong Markov property at time $T_{n+1}$ implies that the process $X\restr{n}$ jumps according to $\mathcal{D}_n$, independently of the past, so on the complement this must hold as well, so
\[
\mathcal{D}_{n+1}\big(x\restr{n}\in \cdot \mid \pi\restr{n}\neq \mathbf{1}_n \text{ or }v_1 = 0\big) = \mathcal{D}_n,
\]
which can be rewritten
\begin{equation} \label{eq:compat-dn-n+1}
J_{n+1}\mathcal{D}_{n+1}\big(\{x\restr{n}\in\cdot\}\cap\{\pi\restr{n}\neq \mathbf{1}_n \text{ or }v_1 = 0\}\big) = J_n \mathcal{D}_n.
\end{equation}
Now for all integers $n\leq m$, let us define a measure on $\markpart{m}$ by
\[
\mu_{n}^{m} := J_m \mathcal{D}_m\big( \cdot\cap \{\pi\restr{n}\neq \mathbf{1}_n \text{ or }v_1 = 0\}\big), \quad m\geq n.
\]
Let us prove that for all integers $n\leq k \leq m$, we have
\begin{equation} \label{eq:compat-mu-n-m-k}
\mu_{n}^{m}(x\restr{k} \in \cdot) = \mu_{n}^{k}.
\end{equation}
Note that there is nothing to prove in the case $k=m$.
Now suppose this is proven for fixed $n\leq k\leq m$.
Then,
\begin{align*}
\mu_{n}^{m+1}(x\restr{k}\in\cdot) &= J_{m+1}\mathcal{D}_{m+1}\big(\{x\restr{k}\in\cdot\}\cap\{\pi\restr{n}\neq \mathbf{1}_n \text{ or }v_1 = 0\}\big)\\
&= J_{m+1}\mathcal{D}_{m+1}\big(\{(x\restr{m})\restr{k}\in\cdot\} \cap\{(\pi\restr{m})\restr{n}\neq \mathbf{1}_n \text{ or }v_1 = 0\} \\
& \phantom{=J_{m+1}\mathcal{D}_{m+1}\big(\{(x\restr{m})\restr{k}\in\cdot\}} \;\cap\{\pi\restr{m}\neq \mathbf{1}_m \text{ or }v_1 = 0\}\big)\\
&= J_m \mathcal{D}_m\big( \{x\restr{k}\in\cdot\} \cap\{\pi\restr{n}\neq \mathbf{1}_n \text{ or }v_1 = 0\}\big)\\
&= \mu_n^{m}(x\restr{k}\in\cdot) = \mu_n^k,
\end{align*}
where we have used \eqref{eq:compat-dn-n+1} and the fact that $\{\pi\restr{n}\neq \mathbf{1}_n \text{ or }v_1 = 0\}\subset \{\pi\restr{m}\neq \mathbf{1}_m \text{ or }v_1 = 0\}$.
By induction on $m$ this proves \eqref{eq:compat-mu-n-m-k} for any integers $n\leq k\leq m$.
Note that in particular, taking $k=n$, we see that the total mass of $\mu_n^{m}$ is equal to that of $\mu_n^{n}$, which is $J_n$.
In summary, for any $n\in\N$, the sequence $(\markpart{m},\mu_n^{m}/J_n,m\geq n)$ defines a inverse system of compact probability spaces, and by the Kolmogorov extension theorem, there exists a unique measure $\mu_n$ (with total mass $J_n$) on the inverse limit $\varprojlim_{m}\markpart{m} = \markpart{\infty}$ such that for each $m\geq n$, $\mu_n(x\restr{m}\in\cdot)=\mu_{n}^{m}$.
Now notice that by definition, for any integers $n_1 \leq n_2 \leq m$, we have
\[
\mu_{n_2}^{m}(\cdot \cap \{\pi\restr{n_1}\neq \mathbf{1}_{n_1} \text{ or }v_1 = 0\}) = \mu_{n_1}^{m},
\]
which implies by construction
\[
\mu_{n_2}(\cdot \cap \{\pi\restr{n_1}\neq \mathbf{1}_{n_1} \text{ or }v_1 = 0\}) = \mu_{n_1}.
\]
This means that the sequence of measures $(\mu_n, n\geq 1)$ on $\markpart{\infty}$ is increasing, and one can define the limit as $\mathcal{D}$.
This measure then satisfies by construction
\begin{align*}
\mathcal{D}\big(\{x\restr{n}\in\cdot\}\cap\{\pi\restr{n}\neq \mathbf{1}_n \text{ or }v_1 = 0\}\big) &= \mu_{n}(x\restr{n}\in\cdot) \\
&= \mu_n^{n} \\
&= J_n \mathcal{D}_n,
\end{align*}
which is indeed \eqref{eq:consistency-D-Dn}.

Secondly, note that since for any $n\in\N$, clearly $\mu_n(\pi=\mathbf{1}\text{ and }v_1\neq 0) = 0$, where $\mu_n$ are the measures defined above, so in the limit \eqref{eq:d-support} holds.
Let us now show uniqueness.
If a measure $\mathcal{D}'$ on $\markpart{\infty}$ satisfies \eqref{eq:consistency-D-Dn} and \eqref{eq:d-support} then
\[
\mathcal{D}'(\cdot\cap\{\pi\restr{n}\neq \mathbf{1}_{n} \text{ or }v_1 = 0\}) = \mu_{n},
\]
and letting $n\to\infty$, $\mathcal{D}'=\mathcal{D}$, which proves uniqueness.

Finally, $\mathcal{D}$ is exchangeable.
Indeed if $\sigma:\N\to\N$ is a permutation, let $m\in\N$ such that $\sigma(k)=k$ for all $k\geq m$.
Now for all $n\geq m$, using the exchangeability of the probability measures $(\mathcal{D}_k,k\geq 1)$, we get
\begin{align*}
\mathcal{D}((x^{\sigma})\restr{n}\in\cdot) &= \lim_{k\to\infty}\mathcal{D}\big(\{(x^{\sigma})\restr{n}\in\cdot\}\cap\{\pi\restr{k}\neq \mathbf{1}_{k} \text{ or }v_1 = 0\}\big)\\
&= \lim_{k\to\infty}J_k\mathcal{D}_k((x^{\sigma})\restr{n}\in\cdot)\\
&= \lim_{k\to\infty}J_k\mathcal{D}_k(x\restr{n}\in\cdot)\\
&= \lim_{k\to\infty}\mathcal{D}(\{x\restr{n}\in\cdot\}\cap\{\pi\restr{k}\neq \mathbf{1}_{k} \text{ or }v_1 = 0\})\\
&= \mathcal{D}(x\restr{n}\in\cdot).
\end{align*}
As this is true for all $n\geq m$, necessarily $\mathcal{D}(x^{\sigma}\in\cdot)=\mathcal{D}$, i.e.\ $\mathcal{D}$ is exchangeable.

\paragraph{Step 2.} We prove that $\mathcal{D}(\markpart{\infty}\setminus\markpartstar)=0$ by using that $X$ is non-degenerate.
For this, we need to show first that the process $(B_1(t),t\geq 0)$ of the block of $X$ containing $1$ is equal in distribution to a process $(B(t),t\geq 0)$ constructed from a Poisson point process of intensity $\d t \otimes \mathcal{D}$.

More precisely, define $\mathcal{N}$ a Poisson point process on $[0,\infty)\times\markpart{\infty}$ with intensity $\d t\otimes (\mathcal{D}+\widetilde{\lambda}_\infty)$, where $\widetilde{\lambda}_\infty$ is the push-forward of $\lambda_\infty$ by the map $y \in\R \mapsto (\mathbf{1},\e^{y}) \in\markpartstar$.
Let us define
\[
\mathcal{N}' := \big\{(t,\log v_1), \; (t,x)\in\mathcal{N} \text{ with }x =(\pi,\mathbf{v}) \text{ and } v_1\notin \{0,1\}\big\},
\]
which is then a Poisson point process on $[0,\infty)\times \R$ with intensity
\[
\d t \otimes \Big(\mathcal{D} \big(\{\log v_1\in\cdot\}\cap\{v_1\notin \{0,1\}\}\big) + \lambda_\infty \Big).
\]
However, note that using \eqref{eq:d-support}, \eqref{eq:consistency-D-Dn} and finally \eqref{eqpr:eta-n-def}, we get
\begin{align*}
\mathcal{D}&\big(\{\log v_1\in\cdot\}\cap\{v_1\notin \{0,1\}\}\big)\\
&= \sum_{n\in\N}\mathcal{D}\big(\{\log v_1\in\cdot\}\cap\{\pi\restr{n} = \mathbf{1}_n\text{ and }v_1\notin \{0,1\}\}\cap\{\pi\restr{n+1} \neq \mathbf{1}_{n+1}\}\big)\\
&= \sum_{n\in\N}J_{n+1}\mathcal{D}_{n+1}\big(\{\log v_1\in\cdot\}\cap\{\pi\restr{n} = \mathbf{1}_n\text{ and }v_1\notin \{0,1\}\}\big)\\
&= \sum_{n\in\N}(J_{n+1}-J_n) \widetilde{\eta}_{n+1},
\end{align*}
and by \eqref{eq:lambda-infty-lambda-n}, we find $\mathcal{D} \big(\{\log v_1\in\cdot\}\cap\{v_1\notin \{0,1\}\}\big) + \lambda_\infty = \lambda_1$.
Therefore, it is possible to define a Lévy process $\xi$ with characteristic exponent $\psi_1$, such that the point process of its jumps is precisely $\mathcal{N}'$.
Let us also define a process $B=(B(t),t\geq 0)$ with càdlàg sample paths with values in $2^{\N}$ the subsets of $\N$, such that $B$ has the distribution of $(B_1(t),t\geq 0)$ the block containing $1$ in $X$.
First define $T$ as the first time $t\in[0,\infty)$ such that there is an atom $(t,(\pi,\mathbf{v}))\in \mathcal{N}$ with $v_1=0$.
If there is none, then let $T=\infty$.
Then, for each $n\in\N$, let $(t_1,x_1),(t_2,x_2),\ldots$ be the whole sequence (finite or infinite) of atoms of $\mathcal{N}$ with $t_1 < t_2 < \ldots \leq T$ such that for each $i$, $x_i=(\pi_i,\mathbf{v}_i)$ with $(\pi_i)\restr{n}\neq \mathbf{1}_n$ and $(v_i)_1>0$ (or such that $(v_i)_1=0$ for the possible last atom, at time $T$).
We can define $\widetilde{B}^{n}_0 = [n]$, and inductively for each $i\geq 1$
\[
\widetilde{B}^{n}_{i} := \widetilde{B}^{n}_{i-1} \cap (A_i \cap [n]),
\]
where $A_i$ is the block of $\pi_i$ containing $1$.
Now define, for $t\in[0,\infty)$,
\[
B^{n}(t) = \widetilde{B}^{n}_i \text{ if }t\in[t_i,t_{i+1}),
\]
where we let $t_0:=0$, and in the case $T < \infty$, i.e.\ if the sequence of atoms is finite, say with length $k\in\N$, we let $t_{k+1} := \infty$.
It is readily checked that this construction is consistent in the sense that for each $t\geq 0$ there is a single $B(t)\in 2^{\N}$ such that $B^{n}(t)=B(t)\cap[n]$.
Let us show that this process $(B(t),\xi(t),t\geq 0)$ has the same distribution as the marked block containing $1$ in $X$, i.e.\ $(B_1(t), \log V_1(t),t\geq 0)$.
For fixed $n\in\N$ and $x\in\markpart{n}$, recall that
\[
X\restr{n} \text{ under } \P_{x} \ed \ssfrag_0(x,X^{(\cdot)}),
\]
where $X^{(\cdot)}$ is an independent i.i.d.\ sequence of copies of $X$ started from $(\mathbf{1},1)$.
Using the same notation, for any $A\subset [n]$ with $1\in A$ and $v>0$, the law of the process $(B_1(t)\cap[n],\log V_1(t),t\geq 0)$ started from $(A,\log v)$ can be deduced from that of $X^{(1)}=(\Pi^{(1)},\mathbf{V}^{(1)})$.
More precisely, $\log V_1(t)$ behaves as a Lévy process with characteristic exponent $\psi_n$ started from $\log v$, until an independent time $T_n\sim\Exp(J_n)$ when $\Pi^{(1)}\restr{n}$ first jumps.
At that time, $D_n^{(1)}=(\Pi^{(1)}(T_n),\mathbf{V}^{(1)}(T_n)/V_1(T_n-))\restr{n}$ is independently drawn according to $\mathcal{D}_n$ and then writing $D_n^{(1)}=(\pi,\mathbf{v})$,
\[
\log V_1(T_n) = \log V_1(T_n-)+\log v_1 \quad\text{ and }\quad B_1(T_n) = B_1(T_n-)\cap A_1=A\cap A_1,
\]
where $A_1$ is the block of $\pi$ containing $1$.
Note that there is a non-zero probability that $B_1(T_n) = B_1(T_n-)$ (even with $v_1>0$) when $A\neq [n]$.
Now in our construction, if $(t_1,x)$ is the first atom of $\mathcal{N}$ such that $\pi\neq \mathbf{1}_n$ or $v_1 = 0$, where $x=(\pi,\mathbf{v})$, then $t_1$ is exponentially distributed with parameter $\mathcal{D}(\pi\restr{n}\neq \mathbf{1}_n \text{ or }v_1 = 0) = J_n$, and $x$ is independent of $t_1$, distributed as
\[
\frac{1}{J_n}\mathcal{D}(\cdot \cap \{\pi\neq \mathbf{1}_n \text{ or }v_1 = 0\} ),
\]
so that $x\restr{n}$ has distribution $\mathcal{D}_n$.
It remains only to show that $(\xi(s),0\leq s< t_1)$ is distributed as a Lévy process with characteristic exponent $\psi_n$ (killed at $t_1$).
The point process of its jumps is
\[
\mathcal{N}'\cap [0,t_1)\times \R,
\]
which conditional on $t_1$ has intensity 
\[
\d t\otimes\Big(\mathcal{D}\big(\{\log v_1\in\cdot\}\cap\{\pi\restr{n} = \mathbf{1}_n\text{ and }v_1\notin \{0,1\}\}\big)+\lambda_\infty\Big).
\]
Let us show that this intensity is equal to $\d t\otimes\lambda_n$.
Note that
\begin{align*}
&\mathcal{D}\big(\{\log v_1\in\cdot\}\cap\{\pi\restr{n} = \mathbf{1}_n\text{ and }v_1\notin \{0,1\}\}\big)\\
&= \sum_{m\geq n}\mathcal{D}\big(\{\log v_1\in\cdot\}\cap\{\pi\restr{m} = \mathbf{1}_m\text{ and }v_1\notin \{0,1\}\}\cap\{\pi\restr{m+1} \neq \mathbf{1}_{m+1}\}\big)\\
&= \sum_{m\geq n}J_{m+1}\mathcal{D}_{m+1}\big(\{\log v_1\in\cdot\}\cap\{\pi\restr{m} = \mathbf{1}_m\text{ and }v_1\notin \{0,1\}\}\big)\\
&= \sum_{m\geq n}(J_{m+1}-J_m) \widetilde{\eta}_{m+1},
\end{align*}
so \eqref{eq:lambda-infty-lambda-n} shows that
\begin{equation} \label{eq:lambda-n-infty-with-D}
\lambda_n=\lambda_\infty+\mathcal{D}\big(\{\log v_1\in\cdot\}\cap\{\pi\restr{n} = \mathbf{1}_n\text{ and }v_1\notin \{0,1\}\}\big)
\end{equation}
therefore the Lévy measure of $(\xi(s),0\leq s< t_1)$ is indeed $\lambda_n$.
In the end, we have shown that
\[
(B(t),\xi(t),t\geq 0)\ed (B_1(t), \log V_1(t),t\geq 0).
\]
From this construction, we see that for each atom $(t,x)\in \mathcal{N}$, the process $B$ jumps, with
\[
B(t) = B(t-)\cap A,
\]
where $A$ is the block of $x$ containing $1$.
Let us show that this implies $\mathcal{D}(\markpart{\infty}\setminus\markpartstar)=0$.
Assuming the opposite, there is a non-zero probability that there is an atom $(t,x)\in \mathcal{N}$ with $t< T$ such that $x$ contains a finite block with mark not equal to zero.
By exchangeability of $\mathcal{D}$, and from the description of the jumps of $B$, there is a non-zero probability that there is a jump $B(t) = B(t-) \cap A$ where $A$ is finite and $t < T$.
This contradicts the assumption of non-degeneracy of $X$, as then we would have $X(t)\in \markpart{\infty}\setminus\markpartstar$.

From now on, we view $\mathcal{D}$ as an exchangeable measure on $\markpartstar$, satisfying \eqref{eq:d-support} and the $\sigma$-finiteness assumption
\[
\forall n\in\N, \quad \mathcal{D}\big(\pi\restr{n}\neq \mathbf{1}_n\text{ or } v_1=0\big) = J_n < \infty.
\]
It remains essentially to study $\mathcal{D}$ in order to express it as a mixture of paintbox processes.

\paragraph{Step 3.} Let us decompose
\[
\mathcal{D} = \mathcal{D}\big(\cdot\cap\{\pi=\mathbf{1}\}\big)+\mathcal{D}\big(\cdot\cap\{\pi\neq \mathbf{1}\text{ and }\abs{\pi}^{\downarrow}=\mathbf{1}\}\big)+\mathcal{D}\big(\cdot\cap\{\pi\neq \mathbf{1}\text{ and }\abs{\pi}^{\downarrow}\neq \mathbf{1}\}\big)\\
\]
and show that there exist $c\geq 0$ a constant and $\Lambda'$ a $\sigma$-finite measure on $\markmasspart$ such that
\begin{enumerate}[(a)]
  \item \label{eq:d-decomp1}
  $\mathcal{D}\big(\cdot\cap\{\pi=\mathbf{1}\}\big)=\mathcal{D}(\pi=\mathbf{1})\delta_{(\mathbf{1},0)},$
  \item \label{eq:d-decomp2}
  $\displaystyle\mathcal{D}\big(\cdot\cap\{\pi\neq \mathbf{1}\text{ and }\abs{\pi}^{\downarrow}=\mathbf{1}\}\big)=c \sum_{n\in\N}\delta_{\mathfrak{e}_n},$
  where
  \[
  \mathfrak{e}_n := \Big(\big\{\{n\},\N\setminus\{n\}\big\},\, (1,\ldots,1,\underbrace{0}_{n-\text{th index}},1,\ldots) \Big) \in \markpartstar,
  \]
  \item \label{eq:d-decomp3}
  $\displaystyle\mathcal{D}\big(\cdot\cap\{\pi\neq \mathbf{1}\text{ and }\abs{\pi}^{\downarrow}\neq \mathbf{1}\}\big)= \int_{\markmasspart}\rho_{\mathbf{z}}(\cdot)\,\Lambda'(\d \mathbf{z}),$
\end{enumerate}

We use similar arguments as in \cite[Theorem 3.1]{Ber06}, as we have already done in the context of nested fragmentations \cite[Proposition 19]{Duc18}.
First note that by \eqref{eq:d-support}, $\mathcal{D}$-a.e.\ on the event $\{\pi=\mathbf{1}\}$ we have $x=(\mathbf{1},0)$, in other words \ref{eq:d-decomp1} holds.
Note also that $\mathcal{D}(\pi=\mathbf{1})\leq \mathcal{D}(v_1=0)=J_1<\infty$.

Let us now study the measure $\mathcal{D}(\cdot\cap\{\pi\neq \mathbf{1}\})$.
Note that $\mathcal{D}(\{\pi\in\cdot\}\cap\{\pi\neq \mathbf{1}\})$ is an exchangeable measure on $\partit{\infty}$ satisfying for all $n\geq 1$,
\[
\mathcal{D}(\pi\restr{n}\neq \mathbf{1}_n) < \infty.
\]
A consequence of \cite[Theorem 3.1]{Ber06} is that $\pi$ has asymptotic frequencies $\mathcal{D}$-a.e.\ -- recall that $\abs{\pi}^{\downarrow}\in\masspart\subset[0,1]^{\N}$ denotes the nonincreasing reordering of asymptotic frequencies of blocks of $\pi$ -- and one can write
\[
\mathcal{D}(\{\pi\in\cdot\}\cap\{\pi\neq \mathbf{1}\text{ and }\abs{\pi}^{\downarrow} = (1,0,0,\ldots)\}) = c \sum_{n\in\N}\delta_{\{\{n\},\N\setminus\{n\}\}},
\]
where $c\geq 0$.
For conciseness -- and again with some abuse of notation -- we will from now on let $\mathbf{1}:=(1,0,0\ldots)\in\masspart$.
Now let us examine the distribution of $x=(\pi,\mathbf{v})$ on the event $\big\{\pi=\{\{n\},\N\setminus\{n\}\}\big\}$.
Since $x\in\markpartstar$ $\mathcal{D}$-a.e., the singleton block must have mark $0$, while the other block may have a positive mark.
Let $\eta$ be the distribution of this mark on $S^1$, that is
\[
\eta := \mathcal{D}\big(\{v_1\in\cdot\}\cap\big\{\pi=\{\{n\},\N\setminus\{n\}\}\big\}\big),
\]
which is a measure of total mass $c$, for any fixed $n>1$ (by exchangeability, $\eta$ does not depend on the value of $n$).
First, note that $\eta(\{0\}) = 0$.
Indeed, since the events
\[
\big\{\pi=\{\{n\},\N\setminus\{n\}\}\big\}, \qquad n > 1
\]
are disjoint, the following holds.
\begin{align*}
\sum_{n>1} \eta(\{0\}) &= \sum_{n>1}\mathcal{D}\big(\big\{v_1=0\text{ and }\pi=\{\{n\},\N\setminus\{n\}\}\big\}\big) \\
&= \mathcal{D}\Big(\bigcup_{n>1}\big\{v_1=0\text{ and }\pi=\{\{n\},\N\setminus\{n\}\}\big\}\Big)\\
&\leq \mathcal{D}(v_1=0)\\
&=J_1 < \infty,
\end{align*}
which implies necessarily $\eta(\{0\}) = 0$.
Now recall that $\mathcal{D}\big(\{\log v_1\in\cdot\}\cap\{v_1 \notin \{0,1\}\}\big)$ is a Lévy measure, so for all $\epsilon > 0$,
\begin{align*}
\sum_{n>1} \eta(\abs{\log v_1} > \epsilon) &= \sum_{n>1}\mathcal{D}\big(\big\{\abs{\log v_1} > \epsilon\text{ and }\pi=\{\{n\},\N\setminus\{n\}\}\big\}\big) \\
&= \mathcal{D}\Big(\bigcup_{n>1}\big\{\abs{\log v_1} > \epsilon\text{ and }\pi=\{\{n\},\N\setminus\{n\}\}\big\}\Big)\\
&\leq \mathcal{D}\big(\abs{\log v_1}>\epsilon\text{ and }v_1 \neq 0\big) < \infty,
\end{align*}
which implies necessarily $\eta(\abs{\log v_1} > \epsilon) = 0$.
Letting $\epsilon\to 0$, we have $\eta(\abs{\log v_1} > 0) = 0$, so in the end $\eta = c\delta_1$, and \ref{eq:d-decomp2} follows.

Let us now decompose the measure $\mathcal{D}(\cdot\cap\{\abs{\pi}^{\downarrow} \neq \mathbf{1}\})$.
Recall that by construction,
\[
\mathcal{D}(\cdot\cap \{\pi\restr{n}\neq \mathbf{1}\text{ and }\abs{\pi}^{\downarrow}\neq \mathbf{1}\})\leq \mathcal{D}(\cdot\cap \{\pi\restr{n}\neq \mathbf{1}\text{ or }v_1=0\})=\mu_n,
\]
which is a finite measure with total mass $J_n$.
Now let us introduce the injection $\theta_n:\N\to\N,k\mapsto n+k$, and consider
\[
\overset{\leftarrow}{\mu}_n := \mathcal{D}\big(\{x^{\theta_n}\in\cdot\}\cap\{\pi\restr{n}\neq \mathbf{1}\text{ and }\abs{\pi}^{\downarrow}\neq \mathbf{1}\}\big),
\]
which can be seen as the distribution of the marked partition restricted to $\{n+1,n+2,\ldots\}$, on the event $\{\pi\restr{n}\neq \mathbf{1}\text{ and }\abs{\pi}^{\downarrow}\neq \mathbf{1}\}$.
It is readily checked that this measure is exchangeable on $\markpartstar$ and since it is finite, by Proposition \ref{prop:paintbox},
\[
\overset{\leftarrow}{\mu}_n = \int_{\markmasspart}\rho_{\mathbf{z}}(\cdot)\,\Lambda_n(\d \mathbf{z}),
\]
with $\Lambda_n = \overset{\leftarrow}{\mu}_n(\abs{x}^{\downarrow}\in\cdot)$ a finite measure on $\markmasspart$.
Asymptotic frequencies are such that $\abs{x}^{\downarrow}=\abs{x^{\theta_n}}^{\downarrow}$ for all $x\in\markpartstar$, therefore $\Lambda_n$ can also be written
\[
\Lambda_n = \mathcal{D}\big(\{\abs{x}^{\downarrow}\in\cdot\}\cap\{\pi\restr{n}\neq \mathbf{1}\text{ and }\abs{\pi}^{\downarrow}\neq \mathbf{1}\}\big),
\]
and taking nondecreasing limits one can define\[
\Lambda' := \mathcal{D}\big(\{\abs{x}^{\downarrow}\in\cdot\}\cap\{\pi\neq \mathbf{1}\text{ and }\abs{\pi}^{\downarrow}\neq \mathbf{1}\}\big).
\]
To show \ref{eq:d-decomp3}, fix $k,n\in\N$ and consider the permutation $\tau:\N\to\N$ given by
\[
\tau(i) = \begin{cases}
i+k&\text{ if }i\leq n\\
i-n&\text{ if }n<i\leq n+k\\
i&\text{ otherwise.}
\end{cases}
\]
Now notice that
\[
\mathcal{D}\big(\{x\restr{k}\in\cdot\}\cap\{\pi\neq \mathbf{1}\text{ and }\abs{\pi}^{\downarrow}\neq \mathbf{1}\}\big) = \lim_{n\to\infty}\mathcal{D}\big(\{x\restr{k}\in\cdot\}\cap\{(\pi^{\theta_k})\restr{n}\neq \mathbf{1}_n\text{ and }\abs{\pi}^{\downarrow}\neq \mathbf{1}\}\big),
\]
which can be written, using the exchangeability of $\mathcal{D}$,
\begin{align*}
&\mathcal{D}\big(\{x\restr{k}\in\cdot\}\cap\{(\pi^{\theta_k})\restr{n}\neq \mathbf{1}_n\text{ and }\abs{\pi}^{\downarrow}\neq \mathbf{1}\}\big)\\
&=\mathcal{D}\big(\{(x^{\tau\circ\theta_n})\restr{k}\in\cdot\}\cap\{(\pi^{\tau})\restr{n}\neq \mathbf{1}_n\text{ and }\abs{\pi^{\tau}}^{\downarrow}\neq \mathbf{1}\}\big)\\
&=\mathcal{D}\big(\{(x^{\theta_n})\restr{k}\in\cdot\}\cap\{\pi\restr{n}\neq \mathbf{1}_n\text{ and }\abs{\pi}^{\downarrow}\neq \mathbf{1}\}\big)\\
&=\int_{\markmasspart}\rho^k_{\mathbf{z}}(\cdot)\,\Lambda_n(\d \mathbf{z}),
\end{align*}
where $\rho_{\mathbf{z}}^{k}$ is the paintbox process restricted to $k$ elements defined in Section \ref{sec:paintbox}.
Taking limits and because $k$ is generic, we have indeed \ref{eq:d-decomp3}.

\paragraph{Step 4.} It remains to define the measure $\Lambda$ correctly and we will be able to complete the proof of Theorem~\ref{thm:main}.
Recall the definition of $\widetilde{\lambda}_\infty$ as the push-forward of $\lambda_\infty$ by the map $y \in\R \mapsto (\mathbf{1},\e^{y}) \in\markpartstar$, and note that
\[
\widetilde{\lambda}_\infty = \int_{\markmasspart}\rho_{\mathbf{z}}(\cdot)\,\widehat{\lambda}_\infty(\d \mathbf{z}),
\]
where $\widehat{\lambda}_\infty$ is the push-forward of $\lambda_\infty$ by the map $y \in\R \mapsto (\mathbf{1},\e^{y}) \in\markmasspart$.
In the end, let us define
\begin{equation*}
\Lambda = \Lambda'+\mathcal{D}(\pi=\mathbf{1})\delta_{(\mathbf{1},0)}+\widehat{\lambda}_\infty.
\end{equation*}
Putting everything together, we have
\[
\mathcal{D}+\widetilde{\lambda}_\infty =  c \sum_{n\in\N}\delta_{\mathfrak{e}_n}+\int_{\markmasspart}\rho_{\mathbf{z}}(\cdot)\,\Lambda(\d \mathbf{z}).
\]
We can almost complete the proof.
First, fix $n\in\N$ and recall that by definition $J_n = \mathcal{D}(\pi\restr{n}\neq\mathbf{1}_n\text{ or }v_1=0)$.
Since by definition $\widetilde{\lambda}_\infty(\pi\restr{n}\neq\mathbf{1}_n\text{ or }v_1=0)=0$ for all $n$, point (ii) of Theorem \ref{thm:main} is proven:
\[
J_n = nc + \int_{\markmasspart}\Big (1-\sum_{\substack{i\geq 1\\ v_i>0}}s_i^n\Big )\,\Lambda(\d \mathbf{z}).
\]
This implies that $\big(1-\sum_{\substack{i\geq 1\\ v_i>0}}s_i^n\big)$ is $\Lambda$-integrable for any $n\geq 1$.
Furthermore, note that for any $\mathbf{z}=(\mathbf{s},\mathbf{v})\in\markmasspart$,
\[
1-s_1\leq 1-s_1\Big(\sum_{\substack{i\geq 1\\v_i>0}}s_i\Big)\leq 1-\sum_{\substack{i\geq 1\\v_i>0}}s_i^2, \quad \text{and} \quad s_1\1_{v_1=0} \leq 1-\sum_{\substack{i\geq 1\\v_i>0}}s_i,
\]
therefore summing the two expressions yields
\begin{equation}\label{eqpr:integ-1-s1}
\int_{\markmasspart}(1-s_1\1_{v_1>0})\,\Lambda(\d \mathbf{z}) < \infty.
\end{equation}
We keep this in mind for later use and go back to the expression of the measures $\mathcal{D}_n$.
If $J_n>0$, then $\mathcal{D}_n=\mathcal{D}\big(\{x\restr{n}\in\cdot\}\cap\{\pi\restr{n}\neq\mathbf{1}_n\text{ or }v_1=0\}\big)/J_n$, so point (iii) of Theorem \ref{thm:main} is proven:
\[
\mathcal{D}_n=\frac{1}{J_n}\Big(\sum_{i=1}^{n}c\delta_{\mathfrak{e}^{n}_i} + \int_{\markmasspart}\rho^{n}_{\mathbf{z}}\big(\cdot\cap\{\pi\neq\mathbf{1}_n\text{ or }v_1=0\}\big)\,\Lambda(\d\mathbf{z})\Big),
\]
where $\mathfrak{e}^{n}_i\in\markpart{n}$ is defined as
\[
\mathfrak{e}^{n}_i := \Big(\big\{[n]\setminus\{i\},\{i\}\big\},\, (1,\ldots,1,\underbrace{0}_{i-\text{th index}},1,\ldots,1) \Big).
\]
It remains for the first part of the theorem to express $\psi_n$ correctly and to show the integrability condition \eqref{eq:lambda-integrability}.
Recall from \eqref{eq:lambda-n-infty-with-D} that $\lambda_n = \lambda_\infty+\mathcal{D}\big(\{\log v_1\in\cdot\}\cap\{\pi\restr{n} = \mathbf{1}_n\text{ and }v_1\notin \{0,1\}\}\big)$, which shows that
\[
\psi_n(\theta) = id_n\theta -\frac{\beta}{2}\theta^2 + \int_{\markmasspart}\,\sum_{\substack{j\geq 1\\ v_j>0}}s_j^n\left (\e^{i\theta\log v_j} - 1 - i\theta\log v_j \1_{\abs{\log v_j}\leq 1}\right )\,\Lambda(\d \mathbf{z}),
\]
but, from \eqref{eq:betan_dn}, we have
\begin{align*}
d_n &=d_1-\int_{\abs{y}\leq 1}y\,(\lambda_1-\lambda_n)(\d y)\\
&=d_1-\int_{\markmasspart}\,\sum_{\substack{j\geq 1\\ v_j>0}}s_j(1-s_j^{n-1})\log v_j\1_{\abs{\log v_j}\leq 1}\,\Lambda(\d \mathbf{z})
\end{align*}
where we used $(\lambda_1-\lambda_n)= \mathcal{D}\big(\{\log v_1\in\cdot\}\cap\{\pi\restr{n} \neq \mathbf{1}_n\text{ and }v_1\neq 0\}\big)$ which is again deduced from~\eqref{eq:lambda-n-infty-with-D}.
Putting the last two displays together, we get
\[
\psi_n(\theta) = id_1\theta -\frac{\beta}{2}\theta^2 + \int_{\markmasspart}\,\sum_{\substack{j\geq 1\\ v_j>0}}\left (s_j^n\big(\e^{i\theta\log v_j} - 1\big) - i\theta s_j\log v_j \1_{\abs{\log v_j}\leq 1}\right )\,\Lambda(\d \mathbf{z}).
\]
In order to simplify this notation, note that
\begin{align*}
\bigg \lvert \sum_{\substack{j\geq 1\\ v_j>0}} s_j \log{v_j}\1_{\abs{\log v_j}\leq 1}-\log{v_1}\1_{\abs{\log v_1}\leq 1}\bigg\rvert
&\leq (1-s_1\1_{v_1>0})\abs{\log v_1}\1_{\abs{\log v_1}\leq 1} + \sum_{\substack{j\geq 2\\ v_j>0}} s_j\\
& \leq 2(1-s_1\1_{v_1>0}),
\end{align*}
which we proved to be $\Lambda$-integrable in \eqref{eqpr:integ-1-s1}.
Therefore, we can finally define
\[
d := d_1 + \int_{\markmasspart} \bigg( \log{v_1}\1_{\abs{\log v_1}\leq 1}-\sum_{\substack{j\geq 1\\ v_j>0}} s_j \log{v_j}\1_{\abs{\log v_j}\leq 1}\bigg)\, \Lambda(\d \mathbf{z})
\]
in order to get point (i) of Theorem \ref{thm:main}, that is
\[
\psi_n(\theta) = id\theta -\frac{\beta}{2}\theta^2 + \int_{\markmasspart}\bigg(\sum_{\substack{j\geq 1\\ v_j>0}}s_j^n\big(\e^{i\theta\log v_j} - 1\big) - i\theta \log v_1 \1_{\abs{\log v_1}\leq 1}\bigg)\,\Lambda(\d \mathbf{z}).
\]
Now let us show \eqref{eq:lambda-integrability}.
From \eqref{eqpr:integ-1-s1}, it remains only to check that $(\log v_1)^{2}\wedge 1$ is $\Lambda$-integrable.
Since $\lambda_1$ must be a Lévy measure, we have
\begin{align*}
\int_{\R}(y^2\wedge 1) \,\lambda_1(\d y)&=\int_{\markmasspart}\int_{\R}(y^2\wedge 1) \,\rho^{1}_{\mathbf{z}}\big(\{\log v_1\in\d y\}\cap\{v_1\neq 0\}\big)\,\Lambda(\d\mathbf{z})\\
&=\int_{\markmasspart}\sum_{\substack{i\geq 1\\v_i>0}}s_i\big((\log v_i)^2 \wedge 1\big)\,\Lambda(\d\mathbf{z}) < \infty.
\end{align*}
Now note that for all $\mathbf{z}\in\markmasspart$,
\begin{align*}
(\log v_1)^2\wedge 1 &\leq s_1\1_{v_1>0}\big((\log v_1)^2\wedge 1\big) + (1-s_1\1_{v_1>0})\\
&\leq \sum_{\substack{i\geq 1\\v_i>0}}s_i^n\big((\log v_i)^2 \wedge 1\big) + (1-s_1\1_{v_1>0}),
\end{align*}
which is $\Lambda$-integrable.
This proves \eqref{eq:lambda-integrability}, and ends the proof of the main result of Theorem \ref{thm:main}.

For the converse part, let $(c,d,\beta,\Lambda)$ be a given quadruple, where $c,\beta\geq 0$, $d\in\R$, and $\Lambda$ is a measure on $\markmasspart\setminus\{(\mathbf{1},1)\}$ satisfying \eqref{eq:lambda-integrability}.
Then $(\psi_n,J_n,\mathcal{D}_n)$ for all $n\in\N$ are well-defined as in the theorem if one checks that
\begin{equation}\label{tmp:lnbiendef}
\begin{gathered}
\int_{\markmasspart}\,\sum_{\substack{i\geq 1\\ v_i>0}}s_i^n \big((\log v_i)^2\wedge 1\big)\,\Lambda(\d \mathbf{z}) < \infty \\
\text{and}\quad \int_{\markmasspart}\Big (1-\sum_{\substack{i\geq 1\\ v_i>0}}s_i^n\Big )\,\Lambda(\d \mathbf{z}) < \infty.
\end{gathered}
\end{equation}
To that aim, note that for all $\mathbf{z}\in\markmasspart$,
\begin{align*}
\sum_{\substack{i\geq 1\\ v_i>0}}s_i^n \big((\log v_i)^2\wedge 1\big) + \Big (1-\sum_{\substack{i\geq 1\\ v_i>0}}s_i^n\Big ) &\leq s_1^{n}\big((\log v_1)^2\wedge 1\big)+ \sum_{\substack{i\geq 2\\v_i>0}}s_i^n+\Big (1-\sum_{\substack{i\geq 1\\ v_i>0}}s_i^n\Big )\\
&\leq \big((\log v_1)^2\wedge 1\big)+ (1-s_1^{n}\1_{v_1 > 0})\\
&\leq \big((\log v_1)^2\wedge 1\big)+ n(1-s_1\1_{v_1 > 0}),
\end{align*}
which is $\Lambda$-integrable by \eqref{eq:lambda-integrability}, so \eqref{tmp:lnbiendef} is proven.
Now the construction of a $0$-ESSF $X$ with characteristics as above is done via Remark \ref{rk:construction-from-characs}.

\subsection{Proof of Proposition \ref{prop:compactness}} \label{sec:proof-compactness}

Consider a non-degenerate homogeneous ESSF $X=(\Pi,\mathbf{V})$ with characteristics $(c,d,\beta,\Lambda)$.
We make use of a natural genealogy appearing in our construction: jointly for all $n\in\N$, we define processes $(F_n(t),t\geq 0)$ taking values in the subsets of $\N$, starting at $F_n(0)=[n]$, whose role is to ``follow'' integers along a discrete genealogy of blocks.
We will make this statement more precise, but first let us explain the idea.
We will build the $F_n$ deterministically from a sample path of $X$, such that for each time $t\geq 0$,
\[
[n] \subset F_n(t) \subset F_{n+1}(t).
\]
This way, by defining
\begin{equation} \label{eq:def-S-theta-n}
S_\theta^{(n)}(t) = \;\;\sum_{\mathclap{B \text{ block of }X(t)_{|F_n(t)}}}\;\;\tilde{V}_B(t)^{\theta},
\end{equation}
where $\tilde{V}_B(t)$ denotes the mark of a block $B$ of $X(t)$, it is clear, 
since $\bigcup_{n\in\N} F_n (t) = \N$,
that
\[
S_\theta^{(n)}(t) \leq S_{\theta}^{(n+1)}(t) \;\tol_{n\to\infty}\; S_{\theta}(X(t)) = \;\;\sum_{\mathclap{B \text{ block of }X(t)}}\;\;\tilde{V}_B(t)^{\theta}.
\]
The way to define the sets $F_n$ is the following.
Recall that for any $n\in\N$, $T_n$ denotes the first time $t$ when $[n]$ is no longer part of a unique block with positive mark in $X(t)$.
Therefore let $F_n(t) = [n]$ for any $0\leq t < T_n$.
Since $X$ is homogeneous, $T_n$ is an exponential random variable with parameter $J_n$, and conditional on $T_n$, the mark $(V_1(t),0\leq t < T_n)$ behaves as the exponential of a Lévy process $\xi_n$ with characteristic exponent
\[
\psi_n:\theta\mapsto id\theta -\frac{\beta}{2}\theta^2 + \int_{\markmasspart}\bigg(\sum_{\substack{j\geq 1\\ v_j>0}}s_j^n\big(\e^{i\theta\log v_j} - 1\big) - i\theta\log v_1 \1_{\abs{\log v_1}\leq 1}\bigg)\,\Lambda(\d \mathbf{z}).
\]
Then at time $T_n$, $(\Pi(T_n),\mathbf{V}(T_n)/V_1(T_n-))\restr{n}$ is independent of the past and has distribution $\mathcal{D}_n$.
Let us recall that $\mathcal{D}_n$ is expressed in terms of $\Lambda$ by
\[
\mathcal{D}_n=\frac{1}{J_n}\Big(\sum_{i=1}^{n}c\delta_{\mathfrak{e}^{n}_i} + \int_{\markmasspart}\rho^{n}_{\mathbf{z}}\big(\cdot\cap\{\pi\neq\mathbf{1}_n\text{ or }(\pi,\mathbf{v})=(\mathbf{1}_n,0)\}\big)\,\Lambda(\d\mathbf{z})\Big).
\]
At this time, for each block $B$ among the newly created blocks of $X(T_n)$, if $B\cap[n]\neq \emptyset$ and if $B$ has positive mark, let $F_B \subset B$ be the subset consisting of exactly the first $n$ integers that are part of block $B$ (necessarily $B$ contains infinitely many integers so the first $n$ ones exist).
Now we define 
\[
F_n(T_n) :=
[n]
\cup \bigcup_{B}F_{B},
\]
where the union is taken over all newly created blocks of $X(T_n)$ with positive mark and nonempty intersection with
$[n]$.
After this first step, $X(T_n)_{|F_n(T_n)}$ consists of a random but finite (bounded by $n$) number of blocks, those with positive marks containing exactly $n$ integers.
The construction is recursive:
let us write $T_{n,1} := T_n$.
For $k\geq 1$, we define $T_{n,k+1}$ as the next jump time of $\Pi_{|F_n(T_{n,k})}$, that is to say
\[
T_{n,k+1} = \inf\{ t\geq T_{n,k}, \; \Pi(t)_{|F_n(T_{n,k})} \neq \Pi(T_{n,k})_{|F_n(T_{n,k})} \}.
\]
Note that by the branching property, if at time $T_{n,k}$ the marked partition $X(t)_{|F_n(T_{n,k})}$ contains $K$ blocks of size $n$, then $T_{n,k+1}-T_{n,k}$ is an exponential time with parameter $KJ_n$ that is independent of $\sigma(X(t\wedge T_{n,k}), t\geq 0)$.
At time $T_{n,k+1}$, one of the $K$ blocks dislocates, exactly as in the first step.
We can now define the process $F_n$ on the time interval $[T_{n,k},T_{n,k+1}]$: we let $F_n(u)$ constant equal to $F_n(T_{n,k})$ for $u\in [T_{n,k},T_{n,k+1})$, and define
\[
F_n(T_{n,k+1}) = F_n(T_{n,k})\cup \bigcup_{B} F_B,
\] 
where the union is taken over all newly created blocks of $X(T_{n,k+1})$ with positive mark and nonempty intersection with $F_n(T_{n,k+1})$, and as above, $F_B$ denotes the set containing exactly the $n$ smallest elements of block $B$.

This recursive construction defines the process $(F_n(t),t\geq 0)$, such that $(T_{n,k}, k\geq 1)$ is the sequence of its jumping times, and that $X(t)_{|F_n(t)}$ always consists of a finite number of blocks, those with positive mark having size $n$.
Between successive jumping times, the branching property ensures us that each of these blocks has a mark behaving independently as $\e^{\xi_n}$, where $\xi_n$ is a Lévy process with characteristic exponent $\psi_n$.
Therefore if $S_\theta^{(n)}$ is defined by \eqref{eq:def-S-theta-n}, then
\[
(S_\theta^{(n)}(t),\,t\geq 0) \ed \big(\sum_{i}\e^{\theta\xi^{i}_n(t)},\,t\geq 0\big),
\]
where $\big(\xi^{i}_n(t),\,i\geq 1,\,t\geq 0\big)$ is a system of branching particles started from a unique particle at position $0$, which can be described by:
\begin{itemize}
  \item particles move independently as Lévy processes equal to $\xi_n$ in distribution.
  \item a particle branches at rate $J_n$ into a random set of $K$ particles at positions $y+(y_1,\ldots, y_K)$, where $y$ is the position of the mother particle at the time of branching and $(y_1,\ldots, y_K)$ is a vector independent of the past and with distribution given by
  \[
  \E[f(y_1,\ldots,y_K)] = \int_{\markpart{n}} f(\log v_1,\ldots, \log v_K)\,\mathcal{D}_n(\d x),
  \]
  where in the right-hand side integrand, the vector $(v_1,\ldots,v_K)$ denotes the non-zero marks of~$x$.
\end{itemize}
At this point we need the following lemma, which results from standard branching processes arguments.
I could not find a reference which proves this result entirely in this form, so a short, straightforward proof is given below.
\begin{lemma} \label{lem:cumulant-finite-n}
  We have
  \begin{equation} \label{eqpr:cumulant_finite_n}
  \E [S_{\theta}^{(n)}(t)] = \e^{t\kappa^{(n)}(\theta)}, \quad \text{ with } \quad \kappa^{(n)}(\theta) = A^{(n)}(\theta)+J_nB^{(n)}(\theta),
  \end{equation}
  where $A^{(n)}$ corresponds to the movement of particles, with
  \begin{equation} \label{eq:antheta}
  A^{(n)}(\theta) = d\theta +\frac{\beta}{2}\theta^2+\int_{\markmasspart}\,\bigg(\sum_{\substack{i\geq 1\\ v_i>0}}s_i^n\big(v_i^{\theta} - 1\big) - \theta\log v_1 \1_{\abs{\log v_1}\leq 1}\bigg)\,\Lambda(\d \mathbf{z})
  \end{equation}
  and $B^{(n)}$ corresponds to the branching, with
  \begin{align*}
  B^{(n)}(\theta)&=\int_{\markpart{n}}\big(S_\theta(x) - 1\big)\,\mathcal{D}_n(\d x)\\
  &=\frac{1}{J_n}\int_{\markmasspart}\int_{\{\pi\neq\mathbf{1}_n\text{ or }v_1=0\}}\big(S_\theta(x) - 1\big)\,\rho^{n}_{\mathbf{z}}(\d x)\,\Lambda(\d \mathbf{z}).
  \end{align*}
\end{lemma}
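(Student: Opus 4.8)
The plan is to establish the lemma by the standard first-moment (``many-to-one'') computation for the branching particle system $(\xi_n^i(t),\,i\ge 1,\,t\ge 0)$ described just above, writing $m(t):=\E[S_\theta^{(n)}(t)]=\E\big[\sum_i\e^{\theta\xi_n^i(t)}\big]\in[0,\infty]$. Two elementary identities enter. First, the Lévy process $\xi_n$ has $A^{(n)}(\theta)$ for Laplace exponent, i.e.\ $\E[\e^{\theta\xi_n(s)}]=\e^{sA^{(n)}(\theta)}$ for all $s\ge 0$ (both sides being $+\infty$ when $A^{(n)}(\theta)=\infty$): this value, $\psi_n(-i\theta)$, is computed directly from the characteristic exponent $\psi_n$ of $\xi_n$ and coincides with the expression \eqref{eq:antheta} defining $A^{(n)}(\theta)$, whose integrand has $\Lambda$-integrable negative part by the estimates of Section~\ref{sec:proof-main}, so $A^{(n)}(\theta)$ is well defined in $(-\infty,\infty]$. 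Second, $\int_{\markpart{n}}S_\theta(x)\,\mathcal{D}_n(\d x)=1+B^{(n)}(\theta)$: the atoms $\mathfrak{e}^n_i$ of $\mathcal{D}_n$ carry a single positive mark, equal to $1$, so that $S_\theta(\mathfrak{e}^n_i)=1$ and the $c\,\delta_{\mathfrak{e}^n_i}$ part of $\mathcal{D}_n$ contributes nothing to $\int(S_\theta-1)\,\d\mathcal{D}_n$; this observation also yields the second displayed expression for $B^{(n)}(\theta)$.

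Next I would condition on the first branching time $T\sim\Exp(J_n)$ of the ancestral particle, on its position $\xi_n(T)$ at that time, and on its offspring displacements $(\log v_1,\dots,\log v_K)$ (jointly distributed as the non-zero marks of a $\mathcal{D}_n$-variable, independently of $T$ and of $\xi_n$). On $\{T>t\}$ the expected contribution to $m(t)$ is $\E[\e^{\theta\xi_n(t)}\1_{\{T>t\}}]=\e^{-J_nt}\e^{tA^{(n)}(\theta)}$; on $\{T=s\le t\}$, the branching property makes each offspring the root of an independent copy of the system, a subsystem rooted at position $y$ at time $s$ contributing $\e^{\theta y}m(t-s)$ in expectation, so the contribution given $T=s$ is $\E[\e^{\theta\xi_n(s)}]\,m(t-s)\,\E[\sum_j\e^{\theta\log v_j}]=\e^{sA^{(n)}(\theta)}\,(1+B^{(n)}(\theta))\,m(t-s)$. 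Every quantity in sight is nonnegative, so Tonelli makes this rigorous with values in $[0,\infty]$, and integrating over $s$ against $J_n\e^{-J_ns}\,\d s$ yields the renewal equation
\[
m(t)=\e^{-J_nt}\e^{tA^{(n)}(\theta)}+\big(1+B^{(n)}(\theta)\big)J_n\int_0^t\e^{-J_ns}\e^{sA^{(n)}(\theta)}\,m(t-s)\,\d s .
\]
A direct substitution, using $\kappa^{(n)}(\theta)=A^{(n)}(\theta)+J_nB^{(n)}(\theta)$ so that $\e^{sA^{(n)}(\theta)-s\kappa^{(n)}(\theta)}=\e^{-J_nB^{(n)}(\theta)s}$, verifies that $t\mapsto\e^{t\kappa^{(n)}(\theta)}$ solves this equation.

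Finally I would identify $m$ with $\e^{t\kappa^{(n)}(\theta)}$, splitting on whether $\kappa^{(n)}(\theta)$ is finite. Note first the crude lower bound $m(t)\ge\e^{(A^{(n)}(\theta)-J_n)t}>0$, coming from the ``no branching before $t$'' event. If $\kappa^{(n)}(\theta)<\infty$, then $A^{(n)}(\theta)$ and $B^{(n)}(\theta)$ are finite, the kernel in the renewal equation is locally bounded, and Picard iteration starting from $0$ produces a nondecreasing sequence of locally bounded functions converging to $m$ (by monotone convergence, all contributions being nonnegative) and dominated by a convergent series of iterated convolutions, hence finite and locally bounded; since $\e^{t\kappa^{(n)}(\theta)}$ is another locally bounded solution, Gr\"onwall's inequality forces $m(t)=\e^{t\kappa^{(n)}(\theta)}$. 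If $\kappa^{(n)}(\theta)=\infty$, then either $A^{(n)}(\theta)=\infty$, and the first term of the renewal equation is already $+\infty$, or $A^{(n)}(\theta)<\infty$ and $B^{(n)}(\theta)=\infty$, in which case the integral term is at least $\int_0^t\e^{-J_ns}\e^{sA^{(n)}(\theta)}\e^{(A^{(n)}(\theta)-J_n)(t-s)}\,\d s=t\,\e^{(A^{(n)}(\theta)-J_n)t}>0$ for $t>0$ while its prefactor $(1+B^{(n)}(\theta))J_n$ is $+\infty$; in both cases $m(t)=+\infty=\e^{t\kappa^{(n)}(\theta)}$. The only genuinely delicate point is keeping this bookkeeping consistent in the possibly-infinite regime; the remainder is the routine first-moment computation for branching Markov processes.
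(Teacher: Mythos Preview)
Your proof is correct and follows essentially the same route as the paper: both condition on the first branching time to derive the renewal equation for $m(t)=\E[S_\theta^{(n)}(t)]$ and then identify its solution. The only notable difference is in how finiteness of $m$ is established in the case $\kappa^{(n)}(\theta)<\infty$: the paper bounds $\P(\text{exactly }k\text{ branchings before }t)$ via comparison with a Yule process and then uses multiplicativity $m(t+s)=m(t)m(s)$ to extend from small times, whereas you bound the Picard iterates by the convergent series $\sum_i K^{*i}*g$ and invoke a Gr\"onwall-type iteration for uniqueness; both are standard and equally efficient here.
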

\begin{proof}
  It is standard in the theory of Lévy processes (see e.g. \cite[Theorem 25.17]{Sat99}) that $A^{(n)}(\theta) < \infty$ if and only if $\E[\e^{\theta \xi_n(t)}] < \infty$ for all $t\geq 0$, and in that case $\E[\e^{\theta \xi_n(t)}]=\e^{tA^{(n)}(\theta)}$.
  Now fix $0<s<t$ and consider the event
  \[
  A_s^t:=\{\text{the initial particle branches at time $s$ and no other branching occurs before time }t\}.
  \]
  Then conditional on $A_s^t$, the branching construction yields
  \begin{align*}
  \E\Big[\sum_i \e^{\theta\xi_n(t)}\;\Big|\;A_s^t\Big] &= \E\Big[\int_{\markpart{n}}\sum_i \e^{\theta(\xi_n(s)+\log v_i + \widetilde{\xi}_n^{(i)}(t-s))} \,\mathcal{D}_n(\d x)\;\Big|\;A_s^t\Big]\\
  &= \bigg(\int_{\markpart{n}}\sum_i v_i^{\theta}\,\mathcal{D}_n(\d x)\bigg) \E\Big[\e^{\theta(\xi_n(s) + \widetilde{\xi}_n^{(1)}(t-s))}\Big]\\
  &= \bigg(\int_{\markpart{n}}S_\theta(x)\,\mathcal{D}_n(\d x)\bigg) \E\e^{\theta \xi_n(t)}\\
  &= \big(B^{(n)}(\theta)+1\big) \e^{tA^{(n)}(\theta)},
  \end{align*}
  where $\xi_n$ and the $\widetilde{\xi}_n^{(i)},\,i\geq 1$ are i.i.d.\ Lévy processes started from $0$.
  This quantity does not depend on $s$, so one may write, if $A^t$ is the event of a single branching occurring before time $t$,
  \[
  \E\Big[\sum_i \e^{\theta\xi_n^{i}(t)}\;\Big|\;A^t\Big] = \big(B^{(n)}(\theta)+1\big)\e^{tA^{(n)}(\theta)}.
  \]
  and in particular,
  \[
  \E[ S^{(n)}_\theta(t)]=\E\Big[\sum_i \e^{\theta\xi_n^{i}(t)}\Big] \geq \P(A^t)\big(B^{(n)}(\theta)+1\big)\e^{tA^{(n)}(\theta)},
  \]
  which shows that if $A^{(n)}(\theta) = \infty$ or $B^{(n)}(\theta)=\infty$, then $\E[S^{(n)}_\theta(t)]=\infty$.
  Now let us assume that both quantities are finite, and prove \eqref{eqpr:cumulant_finite_n}.
  First note that the argument above readily extends to
  \[
  \E\Big[\sum_i \e^{\theta\xi_n^{i}(t)}\;\Big|\;A^{t,k}\Big] = \big(B^{(n)}(\theta)+1\big)^k \,\E[\e^{\theta\xi_n(t)}].
  \]
  where $A^{t,k}$ is the event of exactly $k$ particles branching before time $t$.
  We now bound from above the probability of $A^{t,k}$.
  Let $t_0:=0<t_1<t_2<\ldots$ denote the branching times in our particle system.
  Note that as particles produce at most $n$ offspring, the time between consecutive branching times $t_j-t_{j-1}$ is greater than an exponential random variable with parameter $nJ_nj$.
  Therefore we can compare the process counting branching times in our process and a Yule process with birth rate $nJ_n$, which yields
  \[
  \P(A^{t,k})\leq \P(t_k < t) \leq (1-\e^{-tnJ_n})^{k}.
  \]
  Now if $t^*$ is small enough so that $(B^{(n)}(\theta)+1)(1-\e^{-tnJ_n}) < 1$ for all $0\leq t\leq t^*$, then we have 
  \[
  \forall 0\leq t \leq t^*,\qquad \E[S^{(n)}_\theta(t)]=\sum_{k\geq 0}\P(A^{t,k})\big(B^{(n)}(\theta)+1\big)^k \,\E[\e^{\theta \xi_n(t)}] < \infty,
  \]
  so the map $f:t\mapsto \E[S^{(n)}_\theta(t)]$ takes finite values before time $t^*$.
  Now note that for any times $t,s\geq 0$ the branching property applied at time $t$ yields $\E[S_\theta(t+s)\mid S_\theta(t)] = S_\theta(t) f(s)$, and so taking expectations, $f(t+s)=f(t)f(s)$.
  Since $f$ takes finite value for $0\leq t\leq t^*$, this shows that $f(t)$ is finite for all $t\geq 0$.
  Now let us compute $f(t)$ by applying the branching property at the first branching time $t_1$:
  \begin{align*}
  f(t) &= \P(t_1 > t) \E[\e^{\theta\xi_n(t)}] + \int_0^{t} \int_{\markpart{n}}\sum_i \E[\e^{\theta(\xi_n(s)+\log v_i)}]f(t-s)\,\mathcal{D}_n(\d x)\, \P(t_1 \in \d s)\\
  &= \e^{-J_nt}\e^{tA^{(n)}(\theta)} + \int_{0}^{t}J_n\e^{-J_n s}\e^{sA^{(n)}(\theta)}\big(B^{(n)}(\theta)+1\big)f(t-s) \,\d s,
  \end{align*}
  and it is easily checked that the only solution of this equation is indeed
  \[
  \E[S^{(n)}_\theta(t)]=f(t)=\e^{t(A^{(n)}(\theta)+J_nB^{(n)}(\theta))}=\e^{t\kappa^{(n)}(\theta)},
  \]
  so \eqref{eqpr:cumulant_finite_n} is proved.
\end{proof}
Now note that for any $\mathbf{z}\in \markmasspart$, one can write
\[
\sum_{\substack{i\geq 1\\ v_i>0}}s_i^n\big(v_i^{\theta} - 1\big) = \int_{\{\pi=\mathbf{1}_n\text{ and }v_1>0\}}\big(S_\theta(x) - 1\big)\,\rho^{n}_{\mathbf{z}}(\d x),
\]
so plugging this into the expression \eqref{eq:antheta} for $A^{(n)}_\theta$ and putting everything together, we have
\[
\kappa^{(n)}(\theta) = d\theta +\frac{\beta}{2}\theta^2+\int_{\markmasspart}\int_{\markpart{n}}\big(S_\theta(x) - 1\big)\,\rho^{n}_{\mathbf{z}}(\d x) - \theta\log v_1 \1_{\abs{\log v_1}\leq 1}\,\Lambda(\d \mathbf{z}).
\]
Now it is a consequence of the law of large numbers that for all $\mathbf{z}\in\markmasspart$, the following convergence holds (and is nondecreasing)
\[
\int_{\markpart{n}}\big(S_\theta(x) - 1\big)\,\rho^{n}_{\mathbf{z}}(\d x) \tol_{n\to\infty} \sum_{i\geq 1}v_i^{\theta} - 1,
\]
and in the end, Lemma \ref{lem:cumulant-finite-n} and monotone convergence yield
\[
\E[S_\theta(X(t))] = \e^{t\kappa(\theta)}.
\]
Now if $\kappa(\theta)$ is finite, it is a simple consequence of the Markov property of the process $X$ that $(\e^{-t\kappa(\theta)}S_{\theta}(X(t)),t\geq 0)$ is a martingale.
Since it is nonnegative it converges almost surely as $t\to\infty$ so it is almost surely bounded by a random variable which we denote by $C=C_\theta>0$.
Now assume furthermore that $\kappa(\theta) < 0$ for some $\theta \neq 0$.
Notice that almost surely for all $i\geq 1$ and $t\geq 0$,
\[
V_i(t)^{\theta} \leq S_{\theta}(X(t)) \leq C \e^{t\kappa(\theta)},
\]
so for any $\alpha\in\R$ such that $-\alpha/\theta >0$,
\[
V_i(t)^{-\alpha} \leq \big(C\e^{t\kappa(\theta)}\big)^{{-\alpha}/{\theta}},
\]
and so almost surely,
\begin{equation} \label{eqproof:time-to-absorption}
\sup_{i\geq 1}\int_{0}^{\infty}V_i(t)^{-\alpha} \,\d t \leq \frac{\theta C^{-\alpha/\theta}}{\alpha\kappa(\theta)} <\infty.
\end{equation}
Now recall the stopping lines
\[
\tau^{-\alpha}_i(t) = \left (\int_{0}^{\cdot}V_i(s)^{-\alpha} \,\d s\right )^{-1}(t), \qquad i\geq 1, \, t\geq 0,
\]
which we used to change the self-similarity index.
Proposition \ref{prop:changing-index} tells us that the time-changed process $X\circ \tau^{-\alpha}$ is an $\alpha$-ESSF process with characteristics $(c,d,\beta,\Lambda)$, and note that for each $i\geq 1$, the integral
\[
\zeta_i = \int_{0}^{\infty}V_i(s)^{-\alpha} \,\d s
\]
is the hitting time of $0$ by the pssMp $V_i \circ \tau_i^{-\alpha}$.
Clearly \eqref{eqproof:time-to-absorption} shows that $X\circ \tau^{-\alpha}$ reaches absorption before time $\frac{\theta C^{-\alpha/\theta}}{\alpha\kappa(\theta)}$.

It remains to show the finite total length property in the case $-\alpha/\theta \geq 1$.
Recall that 
\[
S_{\theta}(X(t)) = \sum_{k\geq 1}\tilde{V}_k(t)^{\theta} \leq C \e^{t\kappa(\theta)}.
\]
It is elementary (because for any summable sequence $u$, $\norm{u}_p\leq \norm{u}_1$ for any $p\geq 1$) that for any $\alpha$ such that $-\alpha/\theta \geq 1$ this implies
\[
S_{-\alpha}(X(t)) = \sum_{k\geq 1}\tilde{V}_k(t)^{-\alpha} \leq \big(C\e^{t\kappa(\theta)}\big)^{{-\alpha}/{\theta}}.
\]
We claim the time change is such that
\[
\int_0^\infty \#X\circ\tau^{-\alpha}(t) \, \d t = \int_0^{\infty} S_{-\alpha}(X(t)) \, \d t \leq \frac{\theta C^{-\alpha/\theta}}{\alpha\kappa(\theta)} < \infty \quad \text{a.s.}
\]
To make this claim entirely justified, let us define for all $x\in\markpartstar$ the (finite of infinite) set $I(x)=\{i_1,i_2,\ldots\}$ where $i_k$ is the first integer contained in the $k$-th block with positive mark of $x$.
Notice that by definition for any $i\geq 1$, for all $t\leq \zeta_i$, $\d \tau_i^{-\alpha}(t) = V_i^{\alpha}(\tau_i^{-\alpha}(t))\d t$, therefore
\begin{align*}
\int_0^\infty \#X\circ\tau^{-\alpha}(t) \, \d t &= \int_0^\infty\quad\sum_{\mathclap{i\in I(X\circ\tau^{-\alpha}(t))}}1\,\d t\\
&= \sum_{i\geq 1} \int_0^\infty\1_{i\in I(X(\tau_i^{-\alpha}(t)))}\frac{V_i^{\alpha}(\tau_i^{-\alpha}(t))\,\d t}{V_i^{\alpha}(\tau_i^{-\alpha}(t))}\\
&= \sum_{i\geq 1} \int_0^\infty\1_{i\in I(X(t))}V_i^{-\alpha}(t)\,\d t\\
&=\int_0^{\infty} S_{-\alpha}(X(t)) \, \d t
\end{align*}
and the proof is complete.

\paragraph{Acknowledgments.}
I thank the \emph{Center for Interdisciplinary Research in Biology} (Collège de France) as well as \emph{Sorbonne Université} for funding, and Amaury Lambert for a careful reading of this manuscript and many helpful comments.

%%%%%%%%%%%%%%%%%%%%%%%%%%%%%%%%%%%%%%%%%%%%%%%%%%%%%%%%%%%%%%%%%%%%%%%%%%%%%%%%
%%%%%%%%%%%%%%%%%%%%%%%%%%%%%%%%%%%%%%%%%%%%%%%%%%%%%%%%%%%%%%%%%%%%%%%%%%%%%%%%

\phantomsection
\addcontentsline{toc}{section}{References}

\end{document}